%% file: main.tex
\documentclass[11pt]{article}

\usepackage[a4paper,margin=1in]{geometry}
\usepackage{amsmath,amssymb,amsthm,mathtools,bm,mathrsfs}
\numberwithin{equation}{section}
\usepackage{enumitem}
\usepackage{graphicx}
\usepackage{booktabs}
\usepackage{placeins}
\usepackage{flafter}

\usepackage[ruled]{algorithm}
\usepackage{algpseudocode}
\usepackage{microtype}
\usepackage{xcolor}
\usepackage[
  backend=biber,
  style=numeric,
  sorting=nyt,
  giveninits=true,
  maxbibnames=99,
  doi=true,
  isbn=false,
  url=false,
  eprint=true
]{biblatex}

\newcommand{\revisionarxividentifier}[1]{\nolinkurl{#1}}
\DeclareFieldFormat{eprint:arxiv}{%
  arXiv\addcolon\space
  \ifhyperref
    {\href{https://arxiv.org/abs/#1}{%
       \revisionarxividentifier{#1}%
       \iffieldundef{eprintclass}{}
         {\addspace\texttt{\mkbibbrackets{\thefield{eprintclass}}}}}}
    {\revisionarxividentifier{#1}%
     \iffieldundef{eprintclass}{}
       {\addspace\texttt{\mkbibbrackets{\thefield{eprintclass}}}}}}

\DeclareFieldAlias{eprint:arXiv}{eprint:arxiv}

\DeclareNameAlias{default}{given-family}
\DeclareNameAlias{sortname}{given-family}

\DeclareFieldFormat[article,inproceedings]{title}{#1}

\renewbibmacro{in:}{%
  \ifentrytype{article}
    {}
    {\printtext{\bibstring{in}\intitlepunct}}}
    
\usepackage{hyperref}
\usepackage[nameinlink,capitalize,noabbrev]{cleveref}

\hypersetup{
  hypertexnames=false,
  hidelinks,
  pdftitle={Polynomial Acceleration of Alternating Projections: Chebyshev Acceleration, Convergence Rates, and Retractions},
  pdfauthor={Dengyu Zheng and Shixiang Chen},
  pdfkeywords={alternating projections, Chebyshev acceleration, stationary polynomial transform, retraction, clean intersection}
}

\newtheorem{theorem}{Theorem}[section]
\newtheorem{lemma}[theorem]{Lemma}
\newtheorem{proposition}[theorem]{Proposition}
\newtheorem{corollary}[theorem]{Corollary}
\theoremstyle{definition}
\newtheorem{assumption}[theorem]{Assumption}
\newtheorem{definition}[theorem]{Definition}
\theoremstyle{remark}
\newtheorem{remark}[theorem]{Remark}

\Crefname{assumption}{assumption}{assumptions}
\Crefname{assumption}{Assumption}{Assumptions}
\Crefname{definition}{definition}{definitions}
\Crefname{definition}{Definition}{Definitions}
\Crefname{lemma}{lemma}{lemmas}
\Crefname{lemma}{Lemma}{Lemmas}
\Crefname{proposition}{proposition}{propositions}
\Crefname{proposition}{Proposition}{Propositions}
\Crefname{corollary}{corollary}{corollaries}
\Crefname{corollary}{Corollary}{Corollaries}
\Crefname{remark}{remark}{remarks}
\Crefname{remark}{Remark}{Remarks}
\Crefname{appendix}{appendix}{appendices}
\Crefname{appendix}{Appendix}{Appendices}

\newcommand{\E}{\mathcal E}
\newcommand{\R}{\mathbb R}
\newcommand{\M}{\mathcal M}
\newcommand{\Mone}{\mathcal M_1}
\newcommand{\Mtwo}{\mathcal M_2}
\newcommand{\Id}{\mathrm{Id}}
\newcommand{\dist}{\operatorname{dist}}
\newcommand{\spec}{\operatorname{spec}}
\newcommand{\spr}{\operatorname{spr}}
\newcommand{\Retr}{\operatorname{Retr}}
\newcommand{\II}{\mathrm{II}}
\newcommand{\holderexp}{\alpha_{\mathrm H}}

\title{Polynomial Acceleration of Alternating Projections:
Chebyshev Acceleration, Convergence Rates, and Retractions}
\author{Dengyu Zheng\thanks{School of Mathematical Sciences, Key Laboratory of the Ministry of Education for Mathematical Foundations and Applications of Digital Technology, University of Science and Technology of China, Hefei, Anhui, China
(\href{mailto:zq_renius919@mail.ustc.edu.cn}{\texttt{zq\_renius919@mail.ustc.edu.cn}},
\href{mailto:shxchen@ustc.edu.cn}{\texttt{shxchen@ustc.edu.cn}}).}
\and Shixiang Chen\footnotemark[1]}
\date{\small \today \vspace{-3em}}

\begin{document}
\maketitle

\begin{abstract}
Alternating projections (AP) near a clean intersection of smooth manifolds can
converge slowly when the Friedrichs angle is small.
We develop the \emph{Fixed-Point Manifold Acceleration (FPMA) framework} for
{local convergence and polynomial acceleration of} smooth maps
that fix a manifold pointwise.
{Under suitable regularity and uniform normal spectral stability,
FPMA establishes local R-linear convergence and a smooth limiting map inducing
a local retraction.
At fixed points, consistent polynomial acceleration with $p(1)=1$ preserves
the selected limit's first-order response to normal initial perturbations
whenever the transformed normal blocks are uniformly spectrally stable.
Under sufficient smoothness, we give a curvature criterion for second-order
induced retractions and show that stable consistent polynomial acceleration
preserves both the quadratic retraction expansion and this criterion.
{As a special case, we establish new local R-linear
convergence guarantees for fixed-degree Chebyshev-accelerated exact AP
and second-order induced retractions.
For uniform Friedrichs angle $\theta_F\in(0,\pi/2)$, acceleration strictly
improves the local convergence rate per AP evaluation.
The large-degree limiting spectral factor at fixed $\theta_F$ predicts
$O(\theta_F^{-1})$ evaluations for fixed relative error reduction as
$\theta_F\downarrow0$, versus $O(\theta_F^{-2})$ for ordinary AP.}}
{The framework also covers} inexact, relaxed, and generalized AP
{under their respective regularity and stability conditions.}
\end{abstract}

\medskip
\noindent\textbf{Keywords.}
Alternating projections; Chebyshev acceleration; stationary polynomial transform;
limiting map; retraction; clean intersection; manifold optimization.

\medskip
\noindent\textbf{Mathematics Subject Classification (2020).}
65K10; 49M37; 58C05; 90C26.

\section{Introduction}\label{sec:introduction}

The method of alternating projections (AP) for two embedded submanifolds
$\Mone$ and $\Mtwo$ of a finite-dimensional Euclidean space $\E$ generates the
sequence
\[
 y_{k+1}=S_{\rm AP}(y_k),\qquad S_{\rm AP}=P_{\Mone}P_{\Mtwo}.
\]
{Here} $P_{\M_i}$ is the locally single-valued metric projection onto
$\M_i$, and $\M:=\Mone\cap\Mtwo$ is the intersection.
The method originates with von Neumann's subspace theorem
\cite[Theorem~13.7]{vonNeumann1950} and belongs to the classical projection
framework \cite{Bregman1967,BauschkeBorwein1996,Deutsch2001}.
{For nonconvex sets, local convergence has been established
under geometric regularity conditions}
\cite{LewisMalick2008,DrusvyatskiyIoffeLewis2015}.
{Write $T_x\mathcal N$ for the tangent space of a manifold
$\mathcal N$ at $x$. Suppose that $\M$ is locally an embedded submanifold
and that the intersection is clean at $x\in\M$, meaning that}
\[
T_x\mathcal M=T_x\mathcal M_1\cap T_x\mathcal M_2.
\]
{Let $c_F(x):=c_F(T_x\M_1,T_x\M_2)$ denote the Friedrichs
cosine. The normal block of the AP derivative at $x$ has spectral radius
$c_F(x)^2$, which approaches one as the Friedrichs angle tends to zero.
AP can therefore converge slowly at small intersection angles.}

{This slow convergence motivates polynomial acceleration.
For a linear iteration with operator $L$, polynomial acceleration replaces
its $m$-step operator $L^m$ by $p_m(L)$, where the degree $\deg p_m$ is at
most $m$. We call $p_m$ consistent if $p_m(1)=1$, so that $p_m(L)$
preserves vectors fixed by $L$. On a real spectral interval excluding
$1$, the Chebyshev minimax polynomial minimizes the maximum magnitude
among consistent polynomials of degree at most $m$.}
Chebyshev polynomial acceleration is classical in linear iterative methods
\cite{GolubVarga1961,Varga2000,Saad2003,Saad2025}, and periodic Chebyshev
relaxation has also been studied for {locally contractive}
fixed-point iterations
\cite{WadayamaTakabe2021}.
For {AP on} two subspaces, Knyazev et al.
\cite{KnyazevJujunashviliArgentati2010} developed the self-adjoint compression
and optimal-polynomial viewpoint. Bello-Cruz \cite{BelloCruz2026} gave an
explicit full-interval Chebyshev semi-iteration.
{A related benchmark is the optimally tuned generalized
alternating projections (GAP) method, which attains the linear convergence
factor $(1-\sin\theta_F)/(1+\sin\theta_F)$, where $\theta_F$ is the
Friedrichs angle. Its corresponding local rate extends to smooth manifolds}
\cite{FaltGiselsson2017,FaltGiselsson2023}.
Other acceleration mechanisms include line search, extrapolation,
supporting hyperplanes, and vector extrapolation
\cite{GearhartKoshy1989,BauschkeDeutschHundalPark2003,
BauschkeCombettesKruk2006,Pang2015,BarreTaylorAspremont2022}.
{To place the polynomial-operator viewpoint in a common
local nonlinear setting, we develop the \emph{Fixed-Point Manifold
Acceleration (FPMA) framework} for smooth maps that fix a manifold
pointwise and their consistent polynomial transformations.
For exact AP near clean intersections of smooth manifolds, we establish
{new} local {R-linear} convergence
{results for fixed-degree} Chebyshev-accelerated maps and prove strict improvement of the normal
spectral bound per AP evaluation under the assumptions stated below.
Iterations on subspaces with a fixed polynomial are included as a linear
special case, while inexact AP, relaxed AP, and full GAP are also covered
under their respective regularity and stability conditions.}

{A smooth AP limiting map induces a local retraction at
$x\in\M$ by mapping each sufficiently small tangent vector
$\eta\in T_x\M$ to the iteration limit from $x+\eta$.
This supports manifold optimization when the intersection projection
$P_\M$ is difficult to compute but the component projections
$P_{\M_i}$ are tractable.}
{Absil and Malick
\cite[Proposition~5 and Section~4]{AbsilMalick2012} show that
$(x,\eta)\mapsto P_\M(x+\eta)$, $\eta\in T_x\M$, is a local retraction
and generalize this construction to projection-like maps.
Chen et al. \cite{ChenHeHuang2026} use tangent--normal decompositions of
iteration errors and their derivatives to establish smooth limiting
retractions for AP-type maps and their second-order property under the
stated assumptions. Their NewtonSLRA analysis also gives fourth-order
agreement with metric projection for tangent perturbations in structured
low-rank approximation (SLRA).
For Regularized Newton--SLRA with fixed regularization, Zheng and Chen
\cite[Appendix B]{ZhengChen2026} establish a $C^1$ local limiting map and,
under additional smoothness, a second-order induced retraction
{with fourth-order agreement with metric projection for
tangent perturbations}.}
{FPMA recovers the corresponding local limiting-map and
retraction results for smooth stationary maps under its regularity and
normal stability assumptions, while allowing more general tangent--normal
coupling and extending the analysis to polynomial acceleration.
On a nonisolated solution set, this extension must address both the rate
of convergence and the dependence of the selected limit on the initial
point, since the latter can affect the accuracy of the induced retraction.}

{To analyze these two aspects together, FPMA uses the
derivative of a smooth map $S$ fixing $\M$ pointwise.
At $x\in\M$, decompose an ambient direction as $\eta=\eta_T+\eta_N$,
with $\eta_T\in T_x\M$ and $\eta_N\in N_x\M$, where $N_x\M$ is the
normal space. Let $DS(x)$ denote the Fr\'echet derivative of $S$.
Its blocks $A_x:N_x\M\to N_x\M$ and $B_x:N_x\M\to T_x\M$ satisfy}
\[
    {DS(x)[\eta_T+\eta_N]
      =\eta_T+B_x\eta_N+A_x\eta_N,\qquad}
    DS(x)=\begin{pmatrix}I&B_x\\0&A_x\end{pmatrix}.
\]
{Since $S$ fixes $\M$ pointwise, $DS(x)$ has tangent block $I$
and zero tangent-to-normal block; $A_x$ controls normal convergence,
while $B_x$ maps normal perturbations to tangent displacements.
Uniform normal spectral stability means
$\rho_*:=\sup_{x\in K}\operatorname{spr}(A_x)<1$ on a compact set $K\subset\M$,
where $\operatorname{spr}$ denotes the spectral radius.
Under the regularity assumptions of \Cref{thm:limiting-map}, it yields
local R-linear convergence and a smooth limiting map}
\[
    {\Psi_S(y):=\lim_{k\to\infty}S^k(y)\in\M.}
\]
{The local R-linear bound admits every factor
$\widehat q\in(\rho_*,1)$, with a prefactor and a neighborhood of $K$
that may depend on $\widehat q$.
At fixed points near $K$, the limiting map's derivative is}
\[
    {D\Psi_S(x)[\eta_T+\eta_N]=\eta_T+C_x\eta_N,\qquad
    C_x=B_x(I-A_x)^{-1}:N_x\M\to T_x\M.}
\]
{Thus $C_x$ gives the accumulated first-order effect of a
normal initial perturbation on the selected limit.
It also determines the tangent spaces of stable fibers, the sets of
initial points converging to the same limit, connecting FPMA with
classical invariant-manifold theory \cite{HirschPughShub1977}
(\Cref{rem:phase-stable-fibers}).
}

{Regarding retraction properties, let $\eta\in T_x\M$ be
sufficiently small.
Under the assumptions of \Cref{prop:base-tangent-jet}, the induced
retraction has the expansion}
\[
{
\Retr_x^S(\eta)=\Psi_S(x+\eta)
=x+\eta+\frac12\bigl(\II_x(\eta,\eta)-C_x\II_x(\eta,\eta)\bigr)
+O(\|\eta\|^3).
}
\]
{Here $\II_x$ is the second fundamental form:
$\tfrac12\II_x(\eta,\eta)$ is the normal curvature term, while
$-\tfrac12C_x\II_x(\eta,\eta)$ is the tangential correction from limit
selection.
The retraction is therefore second-order at $x$ precisely when
$C_x\II_x(\eta,\eta)=0$ for every $\eta\in T_x\M$
(\Cref{thm:second-order}); this criterion is more general than $C_x=0$
because it only requires $C_x$ to annihilate curvature-generated normal
directions.}

{These expressions link limit selection and retraction
accuracy through $C_x$, whose behavior under polynomial acceleration we
examine next.
For a consistent polynomial $p$, consider a smooth map $H_p$ fixing
$\M$ pointwise with $DH_p(x)=p(DS(x))$.
The normal block becomes $p(A_x)$.
{We show that uniform spectral stability of these blocks
leaves $C_x$ unchanged for every consistent polynomial and every
first-order realization. Thus polynomial acceleration preserves the
selected limit's first-order response to normal initial perturbations
and, under the required smoothness, the retraction's quadratic expansion
and second-order criterion.}
Our contributions are as follows:}

\begin{enumerate}[label=\textup{(\roman*)}]
\item \emph{{FPMA and polynomial acceleration.}}
{For smooth maps fixing $\M$ pointwise, we establish local
R-linear convergence and a smooth limiting map under uniform normal
spectral stability, allowing $B_x\ne0$ and normal blocks with
$\|A_x\|\ge1$ (\Cref{thm:limiting-map}).
For consistent polynomial transformations, the same conclusions follow
from uniform spectral stability of $p(A_x)$, and the limiting-map
derivative is independent of the polynomial and its realization
(\Cref{thm:main-polynomial}).}

\item \emph{{Chebyshev acceleration of AP.}}
{For exact AP near a clean intersection, take a compact
set $K\subset\M$ with $0<\bar c_F:=\sup_{x\in K}c_F(x)<1$.
We construct affine-combination and factorized Chebyshev maps using $m$
AP evaluations per cycle and compute their exact uniform normal spectral
radius $q_m$.
The bound $q_m^{1/m}<\bar c_F^2$ for every $m\ge1$ gives strict spectral
improvement per AP evaluation (\Cref{thm:chebyshev-exact-ap}).
Taking $m\to\infty$ at fixed $\theta_F:=\arccos\bar c_F$ yields a
limiting spectral factor that predicts $O(\theta_F^{-1})$ evaluations
for a fixed relative error reduction as $\theta_F\downarrow0$, compared
with {the $O(\theta_F^{-2})$ prediction for ordinary AP}.
Under the respective assumptions, strict spectral improvement per
evaluation of the chosen base map also holds for inexact AP and
two-parameter relaxed AP with component relaxation parameters in $(0,1]$
(\Cref{cor:chebyshev-iap,thm:chebyshev-gap}).}

\item \emph{{Induced retractions and geometric accuracy.}}
{Under the stated smoothness assumptions, we characterize
second-order induced retractions by the curvature criterion above and
show that stable consistent polynomial acceleration preserves their
quadratic expansion (\Cref{thm:second-order,thm:main-polynomial}).
For exact AP, inexact AP with component maps matching metric projections
in value and derivative at the intersection, and the relaxed and GAP
maps studied below, the coupling block $B_x$ vanishes.
Under their respective stability conditions, this gives $C_x=0$.
Their limiting maps, including those of stable consistent polynomial
transformations, therefore induce second-order retractions
(\Cref{cor:chebyshev-exact-ap-geometry,cor:chebyshev-iap,cor:gap-retraction,cor:outer-gap-retraction,cor:polynomial-gap}).}
\end{enumerate}

{Sections 2--4 develop FPMA and polynomial acceleration;
Sections 5 and 6 apply the framework to exact AP and its inexact, relaxed,
and generalized variants, respectively, and Section 7 reports experiments.
Complete proofs of the three supporting lemmas in Section 3 appear in
\Cref{app:limiting-map-proofs}; the other appendices provide GAP and
zero-coupling analyses and reproducibility details.}

\section{Geometric and spectral preliminaries}
\label{sec:fixed-manifold-framework}

Throughout, $\E$ is a finite-dimensional Euclidean space with inner product
$\langle\cdot,\cdot\rangle$ and induced norm $\|\cdot\|$, and
$r\in\{1,2\}$. {All coordinate spaces carry the Euclidean
norm; for linear and multilinear maps, $\|\cdot\|$ denotes the corresponding
induced operator norm. The symbols $A^*$ and $A^T$ denote the Euclidean
adjoint and matrix transpose, respectively.}
For a set $\mathcal N\subset\E$, $P_{\mathcal N}$ denotes
the metric (nearest-point) projection wherever it is locally single-valued;
when $\mathcal N$ is a linear subspace, $P_{\mathcal N}$ is the orthogonal
projector. {The distance to $\mathcal N$ is
$\dist(y,\mathcal N):=\inf_{z\in\mathcal N}\|y-z\|$.}
The symbol $D$ denotes the Fr\'echet derivative.
{For an embedded submanifold $\mathcal N$,}
$T_x\mathcal N$ and $N_x\mathcal N$ denote the tangent and normal spaces at
$x\in\mathcal N$. {We write $P_{T_x\mathcal N}$ and
$P_{N_x\mathcal N}$ for the corresponding orthogonal projectors.}
For a linear operator $A$, let $\spec(A)$ be its spectrum
and {$\spr(A):=\max\{|\lambda|:\lambda\in\spec(A)\}$}
its spectral radius, with $\spr(A):=0$ on a zero-dimensional space. The
symbols $I$ and $\Id$ denote, respectively, a linear identity operator and
an identity map on the indicated space.

Fix $0<\holderexp\leq1$. The class $C^{k,\holderexp}_{\rm loc}$ consists of
maps whose derivatives through order $k$ are continuous and whose $k$th
derivative is locally H\"older continuous with exponent $\holderexp$.
{At every point, there are a neighborhood $U$ and a constant
$L>0$ such that $\|D^k f(x)-D^k f(y)\|\leq L\|x-y\|^{\holderexp}$ for
$x,y\in U$.}
When $\holderexp=1$, this means that $D^k f$ is locally Lipschitz.
{We use $C^{k,\holderexp}$ for this local class throughout
and interpret manifold regularity in local charts.} {The tubular-coordinate
representation used below is stated in \Cref{lem:tubular-coordinates}.}

Following \cite{ChenHeHuang2026}, we call an iteration
\emph{alternating-projection-type} if it is assembled from exact, inexact, or
relaxed projection steps near an intersection manifold.

\subsection{Clean intersection, transversality, and Friedrichs angle}
\label{sec:geometric-intersections}

Let $\Mone,\Mtwo\subset\E$ be embedded submanifolds and suppose that
$\M:=\Mone\cap\Mtwo$ is an embedded submanifold near the points under
consideration. The intersection is \emph{transversal} at $x\in\M$ if
{$T_x\Mone+T_x\Mtwo=\E$.}
The intersection is \emph{clean} if
\begin{equation}
\label{eq:clean-intersection}
T_x\M=T_x\Mone\cap T_x\Mtwo,
\qquad x\in\M.
\end{equation}
Every transversal intersection is clean, but a clean intersection need not be
transversal.

\begin{definition}[Friedrichs angle and principal angles]
\label[definition]{def:friedrichs-principal}
Let $U,V\subset\E$ be linear subspaces, set $W=U\cap V$, and define
{$U_0:=U\cap W^\perp$ and $V_0:=V\cap W^\perp$.}
The Friedrichs cosine {is}
\begin{equation}
\label{eq:friedrichs-cosine}
c_F(U,V):=
\sup\{\langle u,v\rangle:
u\in U_0,\ v\in V_0,\ \|u\|=\|v\|=1\},
\end{equation}
with $c_F(U,V)=0$ if either reduced space is zero.
{The Friedrichs angle is
$\theta_F(U,V):=\arccos c_F(U,V)$.
The principal angles between $U_0$ and $V_0$ are}
$0\leq\theta_1\leq\cdots\leq\theta_s\leq\pi/2$, where
$s=\min\{\dim U_0,\dim V_0\}$. They are defined recursively by maximizing
$\langle u,v\rangle$ over unit vectors {$u\in U_0$ and
$v\in V_0$} orthogonal to the previously selected principal vectors.
{If $s>0$, then $\theta_F(U,V)=\theta_1$ and
$c_F(U,V)=\cos\theta_1$.} If $s=0$, the list is empty and
$c_F(U,V)=0$, $\theta_F(U,V)=\pi/2$.
{These reduced-space angles correspond to the principal angles between
$U$ and $V$ after removing the zero angles associated with $W$; see}
\cite[Definitions~3.1--3.2 and Proposition~3.3]
{BauschkeBelloCruzNghiaPhanWang2016}.
\end{definition}

{For a compact set $K$ in a clean manifold intersection,
the identity \eqref{eq:clean-intersection} and continuity of the tangent projectors
make $x\mapsto c_F(T_x\Mone,T_x\Mtwo)$ continuous. Its pointwise values
are strictly less than one, so}
\begin{equation}
\label{eq:friedrichs-uniform-bound}
{\sup_{x\in K}c_F(T_x\Mone,T_x\Mtwo)<1.}
\end{equation}

\subsection{{Tubular coordinates and local graphs}}
\label{sec:tubular-coordinates}

{To analyze points near $\M$, we separate their position
along the manifold from their normal displacement using tubular
coordinates. For expansions at a fixed base point, a local graph
describes how $\M$ departs from its tangent plane and makes the
contribution of curvature explicit.}

\paragraph{{Tubular coordinates.}}
{Using the normal bundle
$N\M:=\{(x,n):x\in\M,\ n\in N_x\M\}$, we represent a nearby ambient
point as $y=x+n$, with $(x,n)\in N\M$.
The lemma below gives a unique such pair near the zero section over $K$
for each $y$ in a sufficiently small neighborhood of $K$ in $\E$;
both the base point $x$ and the normal vector $n$ are determined by $y$.}

\begin{lemma}[{Tubular neighborhood}]
\label{lem:tubular-coordinates}
{Let $\M\subset\E$ be a $C^{r+1,\holderexp}$ embedded
submanifold, let $K\subset\M$ be compact, and let $U\subset\E$ be an
open neighborhood of $K$. There are an open neighborhood
$\mathcal D$ of $\{(x,0):x\in K\}$ in $N\M$ and an open neighborhood
$V\subset U$ of $K$ such that}
\begin{equation}
\label{eq:tubular-map}
{
\Theta:\mathcal D\to V,\qquad \Theta(x,n)=x+n,
}
\end{equation}
{is a $C^{r,\holderexp}$ diffeomorphism. The nearest-point
projection on $V$ is single-valued and $C^{r,\holderexp}$, with}
\begin{equation}
\label{eq:tubular-projection}
{
P_\M(x+n)=x,\qquad \dist(x+n,\M)=\|n\|,
\qquad (x,n)\in\mathcal D.
}
\end{equation}
\end{lemma}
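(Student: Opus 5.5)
The plan is the standard tubular neighborhood argument, with attention to the regularity count. Since $\M$ is $C^{r+1,\holderexp}$, its local charts are $C^{r+1,\holderexp}$, so the tangent projector field $x\mapsto P_{T_x\M}$ is only $C^{r,\holderexp}$. Near each point of $K$ I will choose a local $C^{r,\holderexp}$ orthonormal frame $\nu_1(x),\dots,\nu_{d}(x)$ of $N_x\M$, obtained by Gram--Schmidt applied to $P_{N_x\M}e_j$ for suitable fixed vectors $e_j$. This makes $N\M$ a $C^{r,\holderexp}$ manifold with local parametrization $(u,t)\mapsto(\varphi(u),\sum_j t_j\nu_j(\varphi(u)))$. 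In these coordinates $\Theta(u,t)=\varphi(u)+\sum_j t_j\nu_j(\varphi(u))$ is $C^{r,\holderexp}$ with $r\ge1$. At $t=0$ its derivative maps $T_x\M\oplus N_x\M$ isomorphically onto $\E$, because $D\Theta(x,0)[\xi,n]=\xi+n$. The inverse function theorem in the $C^{r,\holderexp}$ class (the inverse inherits H\"older regularity of the top derivative, since inversion of matrices is smooth) then gives a local $C^{r,\holderexp}$ diffeomorphism near each $(x,0)$, $x\in K$.

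The main obstacle is \textbf{global injectivity} on a uniform neighborhood of the zero section over $K$. I will argue by contradiction, using compactness. Suppose there are no $\varepsilon>0$ such that $\Theta$ is injective on $\mathcal D_\varepsilon:=\{(x,n):x\in K_\delta,\ \|n\|<\varepsilon\}$, where $K_\delta$ is a relatively open neighborhood of $K$ in $\M$ with compact closure. Then we get pairs $(x_k,n_k)\neq(x_k',n_k')$ with $x_k+n_k=x_k'+n_k'$ and $\|n_k\|,\|n_k'\|\to0$. Passing to subsequences in the compact closure of $K_\delta$ gives $x_k\to x$ and $x_k'\to x'$. Since $x=x'$, both pairs eventually lie in the neighborhood where $\Theta$ is a local diffeomorphism, which is a contradiction. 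By the same compactness, the local inverse-function neighborhoods contain a uniform $\mathcal D$ of the required form. Shrinking $\varepsilon$ so that $V:=\Theta(\mathcal D)\subset U$ (possible since $K$ is compact and $U$ open) makes $V$ open by invariance of domain, or equivalently by the local diffeomorphism property. Hence $\Theta:\mathcal D\to V$ is a bijective local $C^{r,\holderexp}$ diffeomorphism, and therefore a $C^{r,\holderexp}$ diffeomorphism.

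For the projection identities, shrink further. Take $\varepsilon$ smaller than half the distance between $K$ and $\M\setminus K_\delta$ (positive by embeddedness and compactness), and set $V$ accordingly. For $y=x+n\in V$, any nearest point $z\in\M$ to $y$ satisfies $\|y-z\|\le\|n\|<\varepsilon$, so $z\in K_\delta$. The first-order optimality condition gives $y-z\in N_z\M$, so $(z,y-z)\in\mathcal D$ and $\Theta(z,y-z)=y$. Injectivity then forces $z=x$ and $y-z=n$. Existence of a nearest point follows from closedness of $\M$ locally, by restricting to a compact piece of $\M$ near $K$. This shows $P_\M$ is single-valued on $V$ with $P_\M(x+n)=x$ and $\dist(x+n,\M)=\|n\|$. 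Finally, $P_\M=\mathrm{pr}_1\circ\Theta^{-1}$ is $C^{r,\holderexp}$ as a composition of $C^{r,\holderexp}$ maps.
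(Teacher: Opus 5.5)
Your proof is correct in substance, but its key global step runs in the opposite logical direction from the paper's.

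\textbf{Common start.} Both arguments build $\Theta$ from local $C^{r,\holderexp}$ normal frames. Both then combine $D\Theta(x,0)[a,b]=a+b$ with the inverse-function theorem to obtain local $C^{r,\holderexp}$ inverses.

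\textbf{The paper's route.} The paper imports local single-valuedness of $P_{\mathcal M}$ near each point of $K$ from Absil--Malick (Lemma~4). Continuity of $P_{\mathcal M}$ and nearest-point orthogonality then identify every local inverse with one formula, $y\mapsto(P_{\mathcal M}(y),y-P_{\mathcal M}(y))$. Injectivity follows by gluing: local inverses given by one globally defined map must agree on overlaps.

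\textbf{Your route.} You prove injectivity on a uniform $\varepsilon$-tube directly, by a sequential-compactness contradiction. You then \emph{derive} uniqueness of the nearest point from injectivity. Existence comes from local closedness of the embedded submanifold, and first-order optimality places $(z,y-z)$ back in the tube.

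\textbf{What each buys.} The paper's proof is shorter because it delegates the genuinely nonlocal difficulty to the cited lemma: a distant sheet of $\mathcal M$ might compete for the nearest point. Yours is self-contained, in effect reproves that lemma on the tube, and makes explicit where embeddedness enters.

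\textbf{Bookkeeping to tighten.} The limit point in your contradiction argument lies in $\overline{K_\delta}$, not necessarily in $K$. So invoke the inverse-function theorem at every point of $\mathcal M$, where it applies equally well. Also choose $\delta$ so that $\overline{K_\delta}$ is compact and contained in $U$; shrinking $\varepsilon$ alone does not force $\Theta(\mathcal D)\subset U$.

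\textbf{The one substantive fix.} In your last paragraph, concluding $z\in K_\delta$ from $\|y-z\|\le\|n\|<\varepsilon$ works for base points $x\in K$. It fails for $x$ near the relative boundary of $K_\delta$: there the nearest point could leave the set on which injectivity is known. Use two scales. With $K_\delta:=\{x\in\mathcal M:\dist(x,K)<\delta\}$, prove injectivity over $K_\delta$. Then take the final domain $\mathcal D:=\{(x,n):x\in K_{\delta/2},\ \|n\|<\varepsilon\}$ with $2\varepsilon<\delta/2$. Now $\|z-x\|<2\varepsilon$ gives $z\in K_\delta$, and your injectivity argument goes through unchanged.
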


\begin{proof}
{For the classical smooth tubular-neighborhood theorem,
see \cite[Theorem~6.24]{Lee2013}.
By \cite[Lemma~4]{AbsilMalick2012}, $P_\M$ is
single-valued and $C^r$ near each $x\in K$.
Local $C^{r,\holderexp}$ normal frames make $\Theta$ a
$C^{r,\holderexp}$ map, with
$D\Theta(x,0)[a,b]=a+b$ for $a\in T_x\M$ and $b\in N_x\M$.
The inverse-function theorem \cite{KrantzParks2013} gives local
$C^{r,\holderexp}$ inverses; after shrinking the neighborhoods,
continuity of $P_\M$ and nearest-point orthogonality identify them as
$y\mapsto(P_\M(y),y-P_\M(y))$.
Uniqueness of $P_\M$ makes these inverses agree on overlaps, so a finite
cover of $K$, with the ambient neighborhoods restricted to $U$, yields
$\mathcal D$ and $V$.
The inverse formula gives the stated projection regularity and distance
identity.}
\end{proof}

{{To write the tubular representation in Euclidean
coordinates, near each point of $K$ choose a local}
$C^{r+1,\holderexp}$ parametrization
$\chi:\mathcal O\to\chi(\mathcal O)\subset\M$, where
$\mathcal O\subset\R^{\dim\M}$ is open and $\chi(\mathcal O)$ is open
in $\M$. Let $\{\nu_i(u)\}_{i=1}^c$ be a $C^{r,\holderexp}$ orthonormal
frame of $N_{\chi(u)}\M$, where $c:=\operatorname{codim}\M$.
For a pair $(x,n)\in\mathcal D$ with $x\in\chi(\mathcal O)$, its local
coordinates $(u,v)$ are defined by $x=\chi(u)$ and
$n=\sum_{i=1}^c v_i\nu_i(u)$.
{Thus $u\in\mathcal O\subset\R^{\dim\M}$ parametrizes
the base point $x$, while $v\in\R^c$ gives the coefficients of the
normal vector $n$ in this frame. Together, $(u,v)$ are local coordinates
for the ambient point $y=x+n$, with total dimension
$\dim\M+c=\dim\E$.}
After shrinking $\mathcal O$, the map $(x,n)\mapsto x+n$ in
\eqref{eq:tubular-map} gives a Euclidean tubular chart, also denoted by
$\Theta$, on an open neighborhood of $\mathcal O\times\{0\}$ in
$\mathcal O\times\R^c$, with}
\begin{equation}
\label{eq:tubular-chart}
{
\Theta(u,v)=x+n=\chi(u)+\sum_{i=1}^c v_i\nu_i(u).
}
\end{equation}
{The zero section $v=0$ represents the local patch $\chi(\mathcal O)$.
Orthonormality together with
\eqref{eq:tubular-projection} gives
$\|v\|=\|n\|=\dist(\Theta(u,v),\M)$.
By compactness, finitely many such patches $\chi_\ell(\mathcal O_\ell)$
cover $K$; their associated tubular charts are denoted by $\Theta_\ell$.
For subsets $A,B$ of the same Euclidean space or manifold,
$\overline A$ denotes the closure of $A$ in that space, namely the set
of all limits of sequences in $A$.
We write $A\Subset B$ when $\overline A$ is compact and contained in $B$.
Once compact inner domains are fixed, the tubular charts
and their inverses have uniform bounds for derivatives through order $r$
and for the H\"older constants of the $r$th derivatives.
The coordinate domains and normal radii used for the iteration estimates
are chosen in the proof of \Cref{lem:contracting-iterate} in
\Cref{app:proof-contracting-iterate}.}

\paragraph{{Graph representation at a fixed base point.}}
{For local expansions, fix $x\in\M$. The orthogonal
splitting $\E=T_x\M\oplus N_x\M$ represents a nearby ambient point
$y\in\E$ as}
\begin{equation}
\label{eq:base-point-splitting}
{
y=x+\eta_T+\eta_N,\qquad
\eta_T=P_{T_x\M}(y-x),\quad \eta_N=P_{N_x\M}(y-x).
}
\end{equation}
{Throughout, vectors and derivative blocks at each base
point are written in this tangent--normal order.
For points on $\M$ near the fixed $x$, the tangential projection
$z\mapsto P_{T_x\M}(z-x)$ restricted to $\M$ has derivative
$I_{T_x\M}$ at $x$. The inverse-function theorem therefore gives
a neighborhood of $x$ in $\M$ of the form
\cite[Chapter~5, pp.~98--107]{Lee2013}}
\begin{equation}
\label{eq:local-manifold-graph}
{
\{x+\xi+g(\xi):\xi\in U_x\subset T_x\M\},\qquad
 g:U_x\to N_x\M.
}
\end{equation}
{Here $U_x$ is a neighborhood of $0$, and
$g(0)=0$, $Dg(0)=0$. The graph map has the same $C^k$ or
$C^{k,\holderexp}$ regularity as $\M$, for $k\ge1$; in particular,
$\|g(\xi)\|=O(\|\xi\|^2)$ when $\M$ is $C^2$.}

\subsection{Retractions}
\label{sec:retraction-preliminaries}

In manifold optimization, retractions replace the often expensive exponential
map with a tractable map having the same first-order behavior at the zero section
\cite{AbsilMahonySepulchre2008,ChenHeHuang2026}.

The tangent bundle and its zero section are
{$T\M:=\{(x,\eta):x\in\M,\ \eta\in T_x\M\}$ and
$0_{T\M}:=\{(x,0):x\in\M\}$.}
Under the $C^{r+1,\holderexp}$ assumptions used below, $T\M$ has the regularity
needed for the local $C^r$ maps used in the analysis.

\begin{definition}[Local and second-order retractions]
\label[definition]{def:retraction}
{Let $\Omega\subset\M$ be open, and let
$\mathcal D\subset T\M|_\Omega$ be an open neighborhood of
$\{(x,0):x\in\Omega\}$. A \emph{local retraction} on $\M$ over $\Omega$
is a $C^1$ map $\Retr:\mathcal D\to\M$}
such that, writing $\Retr_x(\eta):=\Retr(x,\eta)$,
\begin{equation}
\label{eq:retraction-conditions}
\Retr_x(0)=x,
\qquad
D\Retr_x(0)={I_{T_x\M}}
\end{equation}
{for every $x\in\Omega$. Equivalently,
$\Retr_x(\eta)=x+\eta+o(\|\eta\|)$ as $\eta\to0$ in $T_x\M$.}
If $\Retr$ is $C^2$, it is a \emph{second-order retraction} at
{$x\in\Omega$} if, for every $\eta\in T_x\M$, the curve
{$c(t):=\Retr_x(t\eta)$}
has zero covariant acceleration at $t=0$.
\end{definition}

{Let $\nabla$ denote the Levi-Civita connection of the
metric induced on $\M$ by $\E$.}
We use the convention that the second fundamental form
$\II_x:T_x\M\times T_x\M\to N_x\M$ is the normal component of ambient
acceleration. The Gauss decomposition for a curve $c(t)\subset\M$ with
$c(0)=x$ and $\dot c(0)=\eta$ is
\begin{equation}
\label{eq:gauss-acceleration}
  \ddot c(0)
  =
  \nabla_{\dot c}\dot c(0)+\II_x(\eta,\eta).
\end{equation}
{Here $\ddot c(0)$ denotes the second derivative of $c$
viewed as a curve in the ambient Euclidean space $\E$.}
{By \eqref{eq:gauss-acceleration}, zero covariant acceleration is equivalent to}
$P_{T_x\M}\ddot c(0)=0$, or, equivalently,
\begin{equation}
\label{eq:second-order-retraction-jet}
  \Retr_x(t\eta)
  =
  x+t\eta+\frac12t^2\II_x(\eta,\eta)+o(t^2)
  \qquad(t\to0).
\end{equation}
Consequently, \eqref{eq:second-order-retraction-jet} is equivalent to
agreement with the exponential map through second order; see
\cite{AbsilMahonySepulchre2008,ChenHeHuang2026}.

{Our retractions are constructed from limiting maps.
If a map $F$ fixes $\M$ pointwise and its iterates converge locally to
$\M$, we write}
\[
  \Psi_F(y):=\lim_{k\to\infty}F^k(y){\in\M.}
\]
{Restricting this map to tangent perturbations gives the
candidate retraction}
\begin{equation}
\label{eq:limiting-map-retraction-preview}
  \Retr_x^F(\eta):=\Psi_F(x+\eta).
\end{equation}
{We establish conditions under which this construction
defines a local retraction, determine its first- and second-order behavior,
and examine whether acceleration preserves these properties.}
{For AP-type maps, \cite{ChenHeHuang2026} establishes
this construction, including joint smoothness near the zero section of the tangent bundle and
the second-order property under the stated assumptions.} {The limiting map of Regularized
Newton--SLRA with fixed positive regularization is treated in}
{\cite[Appendix~B]{ZhengChen2026}}.

\subsection{Tangent--normal model for a fixed-point manifold}

{To analyze convergence toward $\M$ and the tangential
effect of normal perturbations, we decompose the derivative at fixed points.}
\begin{assumption}[Fixed-point-manifold structure]\label[assumption]{ass:geometry}
Let $\M\subset\E$ be a $C^{r+1,\holderexp}$ embedded submanifold, where
$r\in\{1,2\}$, and let $K\subset\M$ be compact. Let $S:U\to\E$ be a
$C^{r,\holderexp}$ mapping on an open neighborhood $U$ of $K$.
{Assume that $S(x)=x$ for every $x\in\M\cap U$.}
\end{assumption}

{Under \Cref{ass:geometry}, define the derivative blocks
at $x\in\M\cap U$ by}
\begin{equation}
\label{eq:normal-tangent-blocks}
{A_x:=P_{N_x\M}DS(x)|_{N_x\M},\qquad
B_x:=P_{T_x\M}DS(x)|_{N_x\M}.}
\end{equation}
{These are continuous bundle maps by the regularity of $S$ and the tangent
and normal projectors. For an ambient direction
$\eta=\eta_T+\eta_N\in\E=T_x\M\oplus N_x\M$, the fixed-point property gives}
\begin{equation}
\label{eq:tangent-normal-block}
{DS(x)[\eta]=\eta_T+B_x\eta_N+A_x\eta_N,\qquad}
DS(x)=
\begin{pmatrix}
I&B_x\\
0&A_x
\end{pmatrix}.
\end{equation}
{The block $A_x$ describes the first-order normal response
to a normal perturbation, while $B_x$ describes its tangential effect.
Since $S$ fixes $\M$ pointwise, $DS(x)[\eta_T]=\eta_T$; hence the tangent
block is $I$ and the tangent-to-normal block is $0$.
In the AP-type setting of \cite[Lemma~4.6]{ChenHeHuang2026}, the
tangent--normal analysis gives $B_x=0$ and uniform contraction of the
normal blocks in the Euclidean operator norm.
Building on this analysis, FPMA allows $B_x\ne0$, so normal perturbations
can produce first-order tangential drift.
We also establish convergence and limiting-map regularity under uniform
normal spectral stability, defined in the next subsection, which allows
stable normal blocks with $\|A_x\|\ge1$.}

\subsection{Uniform normal stability}

\begin{definition}[Uniform normal spectral stability]\label[definition]{def:stability}
{A normal block $A_x:N_x\M\to N_x\M$ is
\emph{spectrally stable} if $\spr(A_x)<1$, equivalently, if $A_x^k\to0$
as $k\to\infty$. A continuous family of these blocks is}
\emph{uniformly spectrally stable} on $K$ if
\begin{equation}
\label{eq:normal-spectral-stability}
  \rho_*:=\sup_{x\in K}\spr(A_x)<1.
\end{equation}
\end{definition}

\begin{remark}[Uniform power stability]\label[remark]{rem:bundle-powers}
Let $K\subset\M$ be compact, and let
{$A_x:N_x\M\to N_x\M$}
be a continuous bundle endomorphism over $K$. If
{\eqref{eq:normal-spectral-stability} holds},
then, for every $q\in(\rho_*,1)$, there exists $C_q>0$ such that
\begin{equation}
\label{eq:normal-power-bound}
\|A_x^k\|\leq C_q q^k,
\qquad x\in K,\qquad k\geq0.
\end{equation}
To see this, complexify the bundle and cover $K$ by finitely many bundle
trivializations. Compactness gives a uniform bound on $(zI-A_x)^{-1}$ for
$|z|=q_0\in(\rho_*,q)$, and the Cauchy integral formula yields
{\eqref{eq:normal-power-bound}}
\cite{HornJohnson2013,DahlquistHuangLeVeque1983}.
\end{remark}

\section{Limiting maps and induced retractions}
\label{sec:limiting-map}

{We now use uniform normal spectral stability to establish
local R-linear convergence and a $C^r$ limiting map for a map $F$ fixing
$\M$ pointwise. The limiting map's derivative at fixed points describes
how normal perturbations shift the selected limit. Restricting the limiting map to tangent
perturbations then gives local retractions. We characterize their
second-order property and give a sufficient condition for fourth-order
agreement with metric projection.}

\begin{theorem}[{$C^r$ limiting map under normal spectral stability}]
\label{thm:limiting-map}
\label{thm:phase-map}
{Suppose \Cref{ass:geometry} holds with $S=F$, and let
$A_x,B_x$ be the blocks in \eqref{eq:normal-tangent-blocks}. Assume uniform normal
spectral stability on $K$ in the sense of \Cref{def:stability}, with
$\rho_*$ defined in \eqref{eq:normal-spectral-stability}.
Then there is an open neighborhood $V$ of $K$ with the following properties.}
\begin{enumerate}[label={\textup{(\roman*)}},leftmargin=*]
\item {For every $y\in V$, the sequence
$\{F^k(y)\}_{k\geq0}$ is well defined and converges to}
\begin{equation}
\label{eq:limiting-map-definition}
\Psi_F(y):=\lim_{k\to\infty}F^k(y){\in\M.}
\end{equation}

\item {The map $\Psi_F:V\to\M$ is $C^r$ and fixes $\M\cap V$
pointwise. At every $x\in\M\cap V$, its derivative is}
\begin{equation}
\label{eq:limiting-map-derivative}
D\Psi_F(x)=
\begin{pmatrix}
I&C_x\\
0&0
\end{pmatrix},
\quad
C_x=B_x(I-A_x)^{-1}.
\end{equation}

\item {For every $\widehat q\in(\rho_*,1)$, after shrinking $V$
depending on $\widehat q$ if necessary, there is a constant
$C_{\widehat q}>0$ such that}
\begin{equation}
\label{eq:limiting-map-rate}
\|F^k(y)-\Psi_F(y)\|
\leq C_{\widehat q}\widehat q^{\,k}\dist(y,\M)
{,\qquad y\in V,\quad k\geq0.}
\end{equation}
Thus every $\widehat q\in(\rho_*,1)$ is an admissible local R-linear
convergence factor.
\end{enumerate}
\end{theorem}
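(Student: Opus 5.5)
The plan is to work in the tubular charts $\Theta_\ell(u,v)$ of \Cref{lem:tubular-coordinates}, where $\M$ is the zero section $\{v=0\}$. In these charts $F$ becomes $\widetilde F(u,v)=(u+\beta(u,v),\,\alpha(u,v))$ with $\alpha(u,0)=0$ and $\beta(u,0)=0$. Because $F$ is $C^{r,\holderexp}$ with $r\ge1$, Taylor's theorem in $v$ gives
\[
\alpha(u,v)=\widehat A_u v+O(\|v\|^{1+\holderexp}),\qquad \beta(u,v)=\widehat B_u v+O(\|v\|^{1+\holderexp}).
\]
Here $\widehat A_u,\widehat B_u$ are the coordinate representations of $A_x,B_x$ at $x=\chi(u)$. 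They are conjugate through $D\Theta$ up to terms that vanish on the zero section, so $\spr(\widehat A_u)=\spr(A_x)$.

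\textbf{Step 1 (normal contraction).} Fix $\widehat q\in(\rho_*,1)$ and pick $q\in(\rho_*,\widehat q)$. By \Cref{rem:bundle-powers}, $\|A_x^k\|\le C_q q^k$ on $K$, and by continuity this extends to a compact neighborhood $K'$ of $K$ in $\M$. I will avoid an adapted norm that must vary with $x$, since patching such norms is awkward. Instead I use a block iteration. Choose $N$ with $C_q q^N\le \widehat q^{\,N}/2$ and set $G:=F^N$, which has normal block $A_x^N$ with $\|A_x^N\|\le\tfrac12\widehat q^{\,N}$. For $y$ with $\dist(y,\M)\le\delta$ and base point $P_\M(y)\in K'$, the expansion applied to $G$ gives
\[
\dist(G(y),\M)\le(\tfrac12\widehat q^{\,N}+C\delta^{\holderexp})\dist(y,\M)\le \widehat q^{\,N}\dist(y,\M)
\]
once $\delta$ is small. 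The tangential displacement satisfies $\|P_\M(G(y))-P_\M(y)\|\le C'\dist(y,\M)$. For the intermediate iterates $F^j$ with $j<N$, I use the crude bound $\dist(F^j y,\M)\le L^j\dist(y,\M)$ together with a similar displacement bound. I expect this step to be the main obstacle: keeping the iterates inside the domain. The base points drift by at most $C''\sum_k\widehat q^{\,Nk}\dist(y,\M)=O(\delta)$. So if $V$ is the set of points within distance $\delta'\ll\delta$ of $K$ with base point in $K'$, and $\delta'$ is chosen so that the accumulated drift stays below $\dist(K,\M\setminus K')$, then all iterates remain in the controlled region. This is the content I would place in \Cref{lem:contracting-iterate}.

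\textbf{Step 2 (convergence and rate).} The successive differences satisfy $\|F^{k+1}y-F^ky\|\le C\dist(F^ky,\M)\le C_1\widehat q^{\,k}\dist(y,\M)$, so the sequence is Cauchy. Its limit lies in $\M$ because the distances tend to zero, which proves (i). Summing the geometric tail gives \eqref{eq:limiting-map-rate}, which is (iii). Fixing $\M\cap V$ pointwise is immediate.

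\textbf{Step 3 ($C^r$ regularity and derivative).} The iterates $F^k$ are $C^r$, and by the chain rule $D(F^k)(y)=DF(F^{k-1}y)\cdots DF(y)$. At a point of $\M$ this product of block upper-triangular matrices equals
\[
\begin{pmatrix}I&B_x\sum_{j<k}A_x^j\\0&A_x^k\end{pmatrix}.
\]
Near $\M$, the factors differ from the factors at the base points by $O(\widehat q^{\,j}\dist(y,\M)^{\holderexp})$, and the base points drift only geometrically. A product-perturbation estimate, again grouped in blocks of length $N$, shows that $D(F^k)$ converges uniformly on $V$. For $r=2$ the same argument applies to $D^2(F^k)$, using the H\"older continuity of $D^2F$ and the summable bounds on the first derivatives. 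Uniform convergence of the derivatives then yields $\Psi_F\in C^r$. Finally, at $x\in\M$ the limit of the block product is
\[
\begin{pmatrix}I&B_x(I-A_x)^{-1}\\0&0\end{pmatrix},
\]
since $\sum_j A_x^j=(I-A_x)^{-1}$ by spectral stability. This is \eqref{eq:limiting-map-derivative}.
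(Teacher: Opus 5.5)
Your architecture matches the paper's. You pass to $F^N$ so the normal block becomes a Euclidean contraction, and use tubular coordinates to get geometric decay of $\dist(F^ky,\M)$ with summable base drift. You then prove convergence of the derivative recurrences and read off $D\Psi_F(x)$ from the limit of the block powers. Your $K'$-margin control of base points is a workable substitute for the paper's wedge domains. Steps~1--2 go through, and so does the first-derivative part of Step~3, even in ambient coordinates: $\sup_{x\in K',\,k\ge0}\|DF(x)^k\|<\infty$ and the normal rows of $D(F^k)(y)$ decay geometrically. One harmless slip: for $r=1$ the factor perturbations are $O(\widehat q^{\,\holderexp j}\dist(y,\M)^{\holderexp})$, not $O(\widehat q^{\,j}\dist(y,\M)^{\holderexp})$. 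The normal derivatives therefore decay only at some rate $\sigma\in(\widehat q^{\,\holderexp},1)$.

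The genuine gap is the $r=2$ step. In $D^2(F^{k+1})=DF(F^ky)\,D^2(F^k)+D^2F(F^ky)[D(F^k)\,\cdot,\,D(F^k)\,\cdot]$, H\"older continuity of $D^2F$ and first-derivative bounds do not make the source term summable, for two reasons.
\begin{itemize}
\item The first derivatives are bounded, not summable: their tangential part converges to $D\Psi_F(y)\neq0$.
\item In ambient coordinates $D^2F$ does not vanish on tangent pairs along $\M$. Differentiating $F(c(s))=c(s)$ twice along a curve in $\M$ gives $D^2F(x)[\eta,\eta]=(I-DF(x))\II_x(\eta,\eta)$. Its normal component $(I-A_x)\II_x(\eta,\eta)$ is nonzero whenever $\II_x(\eta,\eta)\neq0$, since $1\notin\spec(A_x)$.
\end{itemize}
So the source tends to a nonzero limit in general, and ``the same argument'' does not close.

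The missing idea is to run the second-order recurrence in the tubular coordinates that flatten $\M$. There $\widetilde F(u,0)=(u,0)$ forces $D^2_{uu}\widetilde F(u,0)=0$, and H\"older continuity then gives $\|D^2_{uu}\widetilde F(u,v)\|\le L_2\|v\|^{\holderexp}$. With $U_k:=D_yu_k$ and $V_k:=D_yv_k$ the derivatives of the coordinate iterates, the sources are $O(\|v_k\|^{\holderexp}\|U_k\|^2+\|U_k\|\|V_k\|+\|V_k\|^2)$. These are summable because the normal derivatives $V_k$ decay geometrically; compare \eqref{eq:app-d2h}--\eqref{eq:app-source-v}. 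In ambient coordinates you would instead need a cancellation argument: the limiting source lies in the range of $I-DF(x_\infty)$. You do not supply this.

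Working in charts also requires each orbit to stay in a single chart for all $k$. Your $K'$ margin controls $P_\M(F^ky)$ but not chart membership. You therefore need a Lebesgue-number margin for the chart cover, which is what the paper's wedge condition $d_\ell(u)>a\|v\|$ provides.
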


\input{limiting_map_proof}

\begin{remark}[Phase and stable fibers]
\label[remark]{rem:phase-stable-fibers}
{In this pointwise-fixed setting, $\Psi_F$ plays the role of an
\emph{asymptotic-phase map}, linking \Cref{thm:limiting-map} to the
classical stable-fiber theory of normally attracting invariant manifolds
\cite{HirschPughShub1977}.
For $x\in\M\cap V$, its stable fiber
$\mathcal W_x^s:=\{y\in V:\Psi_F(y)=x\}$ consists of the initial points
whose iterates converge to $x$.}
{By \eqref{eq:limiting-map-derivative} and compactness of $K$, after
shrinking $V$ around $K$, $\Psi_F:V\to\M$ is a $C^r$ submersion.
Its fibers are therefore local $C^r$ embedded submanifolds of
codimension $\dim\M$, with}
\[
{
T_x\mathcal W_x^s=\ker D\Psi_F(x)
=\{n-C_xn:n\in N_x\M\}.
}
\]
{For $n\in N_x\M$, \eqref{eq:limiting-map-derivative} gives
$\Psi_F(x+tn)=x+tC_xn+o(t)$ as $t\to0$.
Thus $C_x$ describes the first-order tangential response of the
selected limit to normal initial perturbations{; in the
tangent--normal order of \eqref{eq:base-point-splitting}, the fiber tangent
space is the graph of $-C_x:N_x\M\to T_x\M$}.}
{Whenever $y,F(y)\in V$, the identity
$\Psi_F(F(y))=\Psi_F(y)$ shows that $F$ preserves the fiber.}

{For exact AP at a clean intersection, both tangent-space
projectors preserve $N_x\M$, so $B_x=C_x=0$ and
$D\Psi_{\rm AP}(x)=P_{T_x\M}$ (\Cref{lem:exact-linearization}).
The restrictions of $\Psi_{\rm AP}$ to $\M_1$ and $\M_2$
are also submersions near $x$, so}
\[
{
T_x\mathcal W_x^s=N_x\M,
\qquad
T_x(\mathcal W_x^s\cap\M_i)=T_x\M_i\cap N_x\M,
\quad i=1,2.
}
\]
{These are first-order identities: nonlinear fibers may
be curved, and $\Psi_{\rm AP}$ need not coincide with $P_\M$.}
\end{remark}

{The map $\Retr^F$ in
\eqref{eq:limiting-map-retraction-preview} is jointly $C^r$ on a
neighborhood of the zero section over $\M\cap V$ by composition with
$\Psi_F$, and satisfies \eqref{eq:retraction-conditions} by
\eqref{eq:limiting-map-derivative} and the fixed-point property of $\Psi_F$.}
{To obtain the retraction's second-order expansion, we
first estimate the normal displacement of $x+\eta$ caused by curvature.}

\begin{lemma}[Quadratic normal displacement]\label[lemma]{lem:tangent-normal}
{If $\M$ is $C^2$ and $x\in\M$, then, for
$\eta\in T_x\M$ sufficiently small, the nearest point
$z=P_\M(x+\eta)$ and displacement $n=x+\eta-z$ satisfy}
\begin{equation}
\label{eq:tangent-normal-displacement}
n\in N_z\M,\qquad \|n\|=O(\|\eta\|^2).
\end{equation}
{The estimate is uniform for $x$ in compact subsets of $\M$.}
\end{lemma}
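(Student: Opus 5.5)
The plan is to combine the tubular-neighborhood description of \Cref{lem:tubular-coordinates} with the local graph representation \eqref{eq:local-manifold-graph} at $x$. Here $\M$ is assumed $C^2$, which corresponds to \Cref{lem:tubular-coordinates} with $r=1$. Fix a compact set $K\subset\M$ containing $x$. The lemma gives an open neighborhood $V$ of $K$ on which $P_\M$ is single-valued and $C^1$, with $y-P_\M(y)\in N_{P_\M(y)}\M$. For $x\in K$ and $\eta\in T_x\M$ with $\|\eta\|$ below a uniform radius $\delta$, we have $x+\eta\in V$. This is possible because $V$ contains a uniform ball around $K$ by compactness. Setting $z=P_\M(x+\eta)$ and $n=x+\eta-z$, the nearest-point property immediately gives $n\in N_z\M$.

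For the quadratic bound, the key observation is that $\|n\|=\dist(x+\eta,\M)$, by \eqref{eq:tubular-projection} or simply by the definition of the nearest point. It therefore suffices to exhibit one point of $\M$ within $O(\|\eta\|^2)$ of $x+\eta$. The graph representation \eqref{eq:local-manifold-graph} supplies the point $x+\eta+g(\eta)\in\M$ for $\eta\in U_x$. Hence
\[
\|n\|\le\|x+\eta-(x+\eta+g(\eta))\|=\|g(\eta)\|=O(\|\eta\|^2),
\]
using $g(0)=0$, $Dg(0)=0$ and the $C^2$ regularity of $g$, via Taylor's theorem with $\sup\|D^2g\|$ on a ball.

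The main obstacle is uniformity over compact subsets. We need a radius $\delta>0$ such that the graph maps $g_x$ are defined on $\{\|\xi\|<\delta\}\cap T_x\M$ for all $x\in K$, together with a uniform bound on $\|D^2g_x\|$ there. To obtain this, I would cover $K$ by finitely many $C^2$ charts $\chi_\ell$, as in the tubular-chart discussion, and apply the inverse-function theorem to $u\mapsto P_{T_x\M}(\chi_\ell(u)-x)$ with parameters $x$. Quantitative versions of the inverse-function theorem give an inverse-neighborhood size controlled by bounds on $D\chi_\ell$, $D^2\chi_\ell$, and the lower bound on $D\chi_\ell$ restricted to the tangent projection. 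These quantities are uniform on compact inner domains.

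Alternatively, the uniformity can be obtained more directly, without graphs, by a compactness argument. The function $(x,\eta)\mapsto\dist(x+\eta,\M)$ is bounded by $\|\chi(u+h)-x-\eta\|$, where $\chi(u)=x$ and $h$ is chosen so that $D\chi(u)h=\eta$. A second-order Taylor expansion of $\chi$ with a uniform $D^2\chi$ bound then gives $\|n\|\le C\|\eta\|^2$ with $C$ depending only on the chart bounds. Either route completes the proof with constants uniform over compact subsets.
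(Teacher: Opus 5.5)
Your proposal is correct and follows essentially the same route as the paper. You bound $\|n\|=\dist(x+\eta,\M)$ by $\|g(\eta)\|=O(\|\eta\|^2)$ through the graph point $x+\eta+g(\eta)$, obtain $n\in N_z\M$ from first-order optimality of the unique nearest point, and get uniformity from finitely many graph (or chart) neighborhoods with bounded second derivatives; your chart-based variant with $D\chi(u)h=\eta$ is a clean way to make that last step explicit.

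One small caveat: \Cref{lem:tubular-coordinates} with $r=1$ formally assumes $C^{2,\holderexp}$ rather than $C^2$. The local single-valuedness of $P_\M$ that you need still holds for any $C^2$ submanifold (the paper's proof uses the same fact), so nothing breaks.
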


\begin{proof}
{Using the $C^2$ graph representation
\eqref{eq:local-manifold-graph} at $x$, compare $x+\eta$ with
$x+\eta+g(\eta)$ to obtain}
\[
  \dist(x+\eta,\M)\leq \|g(\eta)\|=O(\|\eta\|^2).
\]
For sufficiently small $\eta$, the nearest point $z$ is unique. The
first-order optimality condition gives $n=x+\eta-z\in N_z\M$, and
$\|n\|=\dist(x+\eta,\M)$, which proves the claim.
{On compact subsets, finitely many graph neighborhoods
with bounded second derivatives give a uniform radius and constant.}
\end{proof}

\begin{proposition}[Expansion of the induced retraction]
\label[proposition]{prop:base-tangent-jet}
{Under the hypotheses of \Cref{thm:limiting-map} with
$r=2$, the induced retraction has the following expansion.
Uniformly for $x$ in compact subsets of $\M\cap V$ and
$\eta\in T_x\M$ sufficiently small,}
\begin{equation}
\label{eq:base-tangent-jet}
  {\Retr_x^F(\eta)=}\Psi_F(x+\eta)
  =
  x+\eta
  +\frac12\Bigl(
      \II_x(\eta,\eta)
      -C_x\II_x(\eta,\eta)
    \Bigr)
  +O(\|\eta\|^3).
\end{equation}
{The term $\tfrac12\II_x(\eta,\eta)$ is the normal
displacement required by curvature, while
$-\tfrac12C_x\II_x(\eta,\eta)$ is the additional tangential displacement
produced by the choice of the limiting point.}
\end{proposition}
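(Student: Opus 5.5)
The plan is to split $x+\eta$ into its nearest point on $\M$ plus a normal displacement, and to Taylor-expand $\Psi_F$ at that nearest point rather than at $x$. Write $z:=P_\M(x+\eta)$ and $n:=x+\eta-z\in N_z\M$. By \Cref{lem:tangent-normal}, $\|n\|=O(\|\eta\|^2)$ uniformly on compact sets. Since $\Psi_F$ is $C^2$ on $V$ by \Cref{thm:limiting-map} with $r=2$, and fixes $z\in\M\cap V$, Taylor's theorem at $z$ together with \eqref{eq:limiting-map-derivative} gives
\[
\Psi_F(x+\eta)=\Psi_F(z+n)=z+C_zn+O(\|n\|^2)=z+C_zn+O(\|\eta\|^4).
\]
The $O$-constant is uniform, by bounded second derivatives of $\Psi_F$ on a compact neighborhood of the relevant set. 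It then remains to expand $z$ and $C_zn$ to third order in $\eta$.

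\textbf{Expansion of $z$ and $n$.} Use the graph representation \eqref{eq:local-manifold-graph} at $x$ and write $z=x+\xi+g(\xi)$. Taking the tangential projection of $x+\eta=z+n$ gives $\eta=\xi+P_{T_x\M}n$. Now $n\in N_z\M$, and $N_z\M$ differs from $N_x\M$ by $O(\|z-x\|)=O(\|\eta\|)$ because $\M$ is $C^3$ and the projectors are $C^2$. Hence $P_{T_x\M}n=O(\|\eta\|\,\|n\|)=O(\|\eta\|^3)$, so $\xi=\eta+O(\|\eta\|^3)$. For a $C^3$ manifold the graph satisfies $g(\xi)=\tfrac12\II_x(\xi,\xi)+O(\|\xi\|^3)$; this is the standard identification of $D^2g(0)$ with the second fundamental form, via \eqref{eq:gauss-acceleration} applied to the curve $t\mapsto x+t\xi+g(t\xi)$. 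Therefore
\[
z=x+\eta+\tfrac12\II_x(\eta,\eta)+O(\|\eta\|^3),\qquad n=-\tfrac12\II_x(\eta,\eta)+O(\|\eta\|^3).
\]

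\textbf{The tangential term.} Since $C_z=B_z(I-A_z)^{-1}$, extended by zero on $T_z\M$, i.e.\ $C_zP_{N_z\M}$, is continuous in $z$, we get $C_zn=C_xn+o(\|\eta\|^2)$ from continuity alone. To get $O(\|\eta\|^3)$ I need $z\mapsto C_zP_{N_z\M}$ to be locally Lipschitz. This holds because $DF$ is $C^{1,\holderexp}$, hence Lipschitz, so $A_z$ and $B_z$ are Lipschitz bundle maps. The resolvent $(I-A_z)^{-1}$ is uniformly bounded by spectral stability on a compact neighborhood, hence also Lipschitz. Then $C_zn=C_xn+O(\|z-x\|\,\|n\|)=-\tfrac12C_x\II_x(\eta,\eta)+O(\|\eta\|^3)$. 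Combining the three displays yields \eqref{eq:base-tangent-jet}.

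\textbf{Main obstacle.} The delicate step is securing uniform third-order remainders. Two points need care. First, the identification $g(\xi)=\tfrac12\II_x(\xi,\xi)+O(\|\xi\|^3)$ needs $D^2g$ Lipschitz, which follows from $\M\in C^{3,\holderexp}$ with $r=2$. Second, the Lipschitz dependence of $C_z$ on $z$ must hold uniformly for $x$ in a compact subset. I would handle both with finitely many graph and tubular charts from \Cref{lem:tubular-coordinates}, fixing compact inner domains where all constants are uniform.
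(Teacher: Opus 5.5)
Your proposal is correct and follows essentially the same route as the paper. Both arguments take the nearest-point split $x+\eta=z+n$ with $\|n\|=O(\|\eta\|^2)$, expand $z$ through the graph at $x$, Taylor-expand $\Psi_F$ at $z$, and use local Lipschitz continuity of $z\mapsto C_z$. The one minor difference is how you get $\xi=\eta+O(\|\eta\|^3)$: you use the tilt estimate $P_{T_x\M}n=O(\|z-x\|\,\|n\|)$, whereas the paper uses the stationarity equation $\xi_*-\eta+Dg(\xi_*)^*g(\xi_*)=0$, and these two are equivalent.
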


\begin{proof}
{Use the $C^{3,\holderexp}$ graph representation
\eqref{eq:local-manifold-graph} at $x$.}
{Here $D^2g(0)=\II_x$ by \eqref{eq:gauss-acceleration}.} Write the nearest point as
\[
  z=x+{\xi_*}+g({\xi_*}).
\]
Comparing the distance at the nearest point with the distance to the graph
point $x$ gives $\|{\xi_*}-\eta\|\leq\|\eta\|$ and hence
${\xi_*}=O(\|\eta\|)$. The stationarity equation for the squared distance to the
graph is
\begin{equation}
\label{eq:tangent-projection-stationarity}
  {\xi_*}-\eta+Dg({\xi_*})^*g({\xi_*})=0.
\end{equation}
Because $Dg({\xi_*})=O(\|{\xi_*}\|)$ and $g({\xi_*})=O(\|{\xi_*}\|^2)$,
{\eqref{eq:tangent-projection-stationarity} gives}
${\xi_*}=\eta+O(\|\eta\|^3)$. Taylor expansion of $g$ then yields
\[
  g({\xi_*})=\frac12\II_x(\eta,\eta)+O(\|\eta\|^3).
\]
Consequently, for $n:=x+\eta-z$,
\begin{equation}
\label{eq:tangent-projection-expansion}
  z
  =
  x+\eta+\frac12\II_x(\eta,\eta)+O(\|\eta\|^3),
  \qquad
  n
  =
  -\frac12\II_x(\eta,\eta)+O(\|\eta\|^3),
\end{equation}
with $n\in N_z\M$ and $\|n\|=O(\|\eta\|^2)$. Since $\Psi_F$ is
$C^2$, its first derivative is locally Lipschitz. Taylor expansion at the
manifold point $z$ therefore yields
\begin{equation}
\label{eq:normal-phase-expansion}
  \Psi_F(z+n)
  =
  z+D\Psi_F(z)n+O(\|n\|^2)
  =
  z+C_zn+O(\|n\|^2),
\end{equation}
{where $C_z$ is defined in \eqref{eq:limiting-map-derivative}.}
Under the $r=2$ hypotheses, the bundle map $z\mapsto C_z$ is $C^1$ and
hence locally Lipschitz along $\M$ when represented in a fixed $C^1$
local trivialization. Therefore
\begin{equation}
\label{eq:phase-tangential-term}
  C_zn
  =
  -\frac12C_x\II_x(\eta,\eta)+O(\|\eta\|^3).
\end{equation}
Because $\|n\|^2=O(\|\eta\|^4)$,
{substituting \eqref{eq:tangent-projection-expansion},
\eqref{eq:normal-phase-expansion}, and \eqref{eq:phase-tangential-term}}
proves \eqref{eq:base-tangent-jet}. The estimates are uniform on compact subsets.
\end{proof}

\begin{theorem}[{Second-order retractions and fourth-order agreement}]
\label{thm:second-order}
\label{thm:fourth-order-agreement}
{Assume the hypotheses of \Cref{thm:limiting-map} with
$r=2$. The induced retraction has the following properties.}
\begin{enumerate}[label={\textup{(\roman*)}},leftmargin=*]
\item {At $x\in\M\cap V$, it is second-order if and only if}
\begin{equation}
\label{eq:second-order-criterion}
C_x\II_x(\eta,\eta)=0,
\qquad {\text{for every }\eta\in T_x\M.}
\end{equation}
{In particular, $B_x=0$ is a sufficient condition.}

\item {Suppose additionally that $B_z=0$ for every
$z\in\M\cap V$. For every compact $K_0\subset\M\cap V$, there exist
$C>0$ and $\varepsilon>0$ such that}
\begin{equation}
\label{eq:fourth-order-agreement}
\|\Psi_F(x+\eta)-P_\M(x+\eta)\|
\leq C\|\eta\|^4
\end{equation}
{whenever $x\in K_0$, $\eta\in T_x\M$, and
$\|\eta\|<\varepsilon$. In this case, $\Retr^F$ is second-order
throughout $\M\cap V$ and has fourth-order agreement with the projection
retraction.}
\end{enumerate}
\end{theorem}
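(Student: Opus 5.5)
Part (i) follows directly from \Cref{prop:base-tangent-jet} and the jet characterization \eqref{eq:second-order-retraction-jet}. Apply \eqref{eq:base-tangent-jet} to $t\eta$ with $\eta\in T_x\M$ fixed. This gives
\[
\Retr_x^F(t\eta)=x+t\eta+\tfrac12t^2\II_x(\eta,\eta)-\tfrac12t^2C_x\II_x(\eta,\eta)+O(t^3).
\]
Since $\Retr^F$ is $C^2$, the ambient acceleration of $c(t)=\Retr_x^F(t\eta)$ at $t=0$ is therefore $\ddot c(0)=\II_x(\eta,\eta)-C_x\II_x(\eta,\eta)$. Here $C_x\II_x(\eta,\eta)\in T_x\M$ and $\II_x(\eta,\eta)\in N_x\M$, so $P_{T_x\M}\ddot c(0)=-C_x\II_x(\eta,\eta)$. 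By \eqref{eq:gauss-acceleration}, zero covariant acceleration for every $\eta$ is then exactly \eqref{eq:second-order-criterion}. If $B_x=0$, then $C_x=B_x(I-A_x)^{-1}=0$, so the criterion holds trivially.

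For part (ii), I would refine the proof of \Cref{prop:base-tangent-jet} at one point. Write $z=P_\M(x+\eta)$ and $n=x+\eta-z\in N_z\M$, with $\|n\|=O(\|\eta\|^2)$ uniformly on $K_0$ by \Cref{lem:tangent-normal}. The target quantity is $\Psi_F(z+n)-z$, so I want to expand $\Psi_F$ at the base point $z$ rather than at $x$. Taylor's theorem with $\Psi_F\in C^2$ gives
\[
\Psi_F(z+n)=z+D\Psi_F(z)n+O(\|n\|^2).
\]
The standing assumption $B_z=0$ on all of $\M\cap V$ gives $C_z=0$ by \eqref{eq:limiting-map-derivative}, so $D\Psi_F(z)n=n_T+C_zn=0$ for the normal vector $n$ at $z$. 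Hence
\[
\|\Psi_F(x+\eta)-P_\M(x+\eta)\|\le C\|n\|^2\le C'\|\eta\|^4.
\]
The last assertion then follows: (i) applies at every point of $\M\cap V$, and the estimate just derived is the fourth-order agreement.

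The main obstacle is uniformity in the Taylor remainder. The constant in $O(\|n\|^2)$ requires a bound on $D^2\Psi_F$ on a compact neighborhood of $K_0$ inside $V$. We also need $z$ to stay in a compact subset of $\M\cap V$ and the segment $z+sn$, $s\in[0,1]$, to lie in that neighborhood. I would handle this by choosing a compact neighborhood $W\subset V$ of $K_0$ and taking $\varepsilon$ small. Then $\|z-x\|\le 2\|\eta\|$ and $\|n\|\le\|\eta\|$, which keeps everything in $W$, and continuity of $D^2\Psi_F$ on $W$ gives the constant. The uniform constant in \Cref{lem:tangent-normal} on $K_0$ then supplies $C$.
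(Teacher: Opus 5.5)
Your proposal is correct and follows essentially the same route as the paper. In part (i) you read off the tangential component $-C_x\II_x(\eta,\eta)$ of the ambient acceleration from \eqref{eq:base-tangent-jet}. In part (ii) you Taylor-expand $\Psi_F$ at $z=P_\M(x+\eta)$, use $C_z=0$ together with $\|n\|=O(\|\eta\|^2)$, and obtain uniformity by keeping $z$ and the segment $z+sn$ inside a fixed compact subset of $V$.
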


\begin{proof}
{For \textup{(i)}, \eqref{eq:base-tangent-jet} and
\eqref{eq:gauss-acceleration} give}
\[
{
P_{T_x\M}\left.\frac{d^2}{dt^2}\Retr_x^F(t\eta)\right|_{t=0}
=-C_x\II_x(\eta,\eta).
}
\]
{Thus zero covariant acceleration in every tangent
direction is equivalent to \eqref{eq:second-order-criterion}.
The sufficient condition follows from $B_x=0\Rightarrow C_x=0$
in \eqref{eq:limiting-map-derivative}.}

{For \textup{(ii)}, set $z=P_\M(x+\eta)$ and
$n=x+\eta-z$. For sufficiently small $\eta$, $z\in\M\cap V$, and
\eqref{eq:tangent-normal-displacement} gives
$n\in N_z\M$ and $\|n\|=O(\|\eta\|^2)$.
The neighborhood assumption ensures $C_z=0$ at this projection point.
Consequently,}
\[
{
\begin{aligned}
\Psi_F(x+\eta)-P_\M(x+\eta)
&=\Psi_F(z+n)-z\\
&\overset{\eqref{eq:normal-phase-expansion}}{=}
C_zn+O(\|n\|^2)\\
&=O(\|n\|^2)
\overset{\eqref{eq:tangent-normal-displacement}}{=}
O(\|\eta\|^4).
\end{aligned}
}
\]
{For $x\in K_0$, choose a common perturbation radius
so that $z$ and the segment $z+\tau n$, $0\leq\tau\leq1$, lie in a
fixed compact subset of $V$.
The second derivatives of $\Psi_F$ and the constants in
\eqref{eq:tangent-normal-displacement} are then uniformly bounded.
This gives $C$ and $\varepsilon$ in \eqref{eq:fourth-order-agreement};
the second-order property of $\Retr^F$ follows from \textup{(i)}.
The comparison map $\eta\mapsto P_\M(x+\eta)$ is the standard
second-order projection retraction
\cite[Proposition~5 and Section~4]{AbsilMalick2012}.}
\end{proof}

\begin{remark}[{Geometric accuracy under acceleration}]
{The pointwise criterion
\eqref{eq:second-order-criterion} only requires $C_x$ to annihilate the
span of the curvature vectors $\II_x(\eta,\eta)$.
It is therefore more general than $C_x=0$: other normal directions may
still affect the selected limit.
{If \eqref{eq:second-order-criterion} fails at $x$, the
induced map still satisfies the retraction conditions
\eqref{eq:retraction-conditions}, but is not second-order at $x$.}
This dependence on $C_x$ motivates the polynomial invariance analysis in
\Cref{subsec:polynomial-convergence}.}
\end{remark}

\section{Polynomial acceleration in the FPMA framework}
\label{sec:stationary-polynomial}
\label{sec:chebyshev}

{The limiting-map theorem gives local convergence under uniform normal
spectral stability. We now study how a polynomial transformation changes the normal
blocks and the first-order dependence of the selected limit on the initial
point. We first construct nonlinear maps whose derivatives at fixed points
are $p(DS(x))$, then apply \Cref{thm:limiting-map} under transformed
spectral stability. This leads to Chebyshev minimax choices and comparisons
per evaluation of the base map $S$.}

\subsection{Consistent polynomials and first-order realizations}
\label{subsec:consistent-polynomial}

\begin{definition}[Consistent polynomial and first-order realization]
\label[definition]{def:polynomial-transform}
{Let $p(\lambda)=\sum_{j=0}^m a_j\lambda^j$
be a real polynomial of degree $m\geq0$.}
We call $p$ \emph{consistent} if $p(1)=1$.
{Let $K\subset\M$ be compact and let $S:U\to\E$ satisfy
\Cref{ass:geometry}.} A
$C^{r,\holderexp}$ map $H_p$ defined near $K$ is a
\emph{first-order realization of $p$ for $S$ along $\M$} if
\begin{equation}
\label{eq:first-order-polynomial-realization}
 H_p(x)=x,
 \qquad
 DH_p(x)=p(DS(x)),
 \qquad x\in\M
\end{equation}
on a relative neighborhood of $K$.
\end{definition}

{For a consistent polynomial $p$, the condition $p(1)=1$ preserves
the identity on tangent directions. The following two constructions satisfy
\eqref{eq:first-order-polynomial-realization}.}
\begin{itemize}[leftmargin=*]
\item \emph{{Affine-combination realization}}
{\cite[Section~3.2]{Saad2025}}.
{Define}
\begin{equation}
\label{eq:orbit-affine-transform}
 G_p^{{\rm comb}}(y)
 :=\sum_{j=0}^m a_jS^j(y).
\end{equation}

\item \emph{{Factorized realization}}
{\cite[Section~3.2]{Saad2025}}.
{Alternatively, given real numbers $\omega_1,\ldots,\omega_m$ such that}
\begin{equation}
\label{eq:consistent-factorization}
 p(\lambda)=\prod_{j=1}^m
 \bigl(1-\omega_j+\omega_j\lambda\bigr),
\end{equation}
{define $R_{\omega_j}(y):=(1-\omega_j)y+\omega_jS(y)$,
$j=1,\ldots,m$. Each map forms an affine combination of the current
point and its image under $S$; this update is called a relaxation step for $S$.
Composing these steps gives}
\begin{equation}
\label{eq:factored-cycle}
 G_p^{\rm cyc}{(y)}
 :={(}R_{\omega_m}\circ\cdots\circ R_{\omega_1}{)(y)}.
\end{equation}
\end{itemize}

{When $m=0$, the empty product is $1$ and the empty composition is
$\Id$. Both constructions use $m$ evaluations of $S$ per application and
require only the composite map $S$. They agree when $S$ is affine;
for nonlinear $S$, they have the same derivative at fixed points but may
differ beyond first order.}

\begin{assumption}[Local well-definedness]\label[assumption]{ass:well-defined}
The realization $H_p$ is $C^{r,\holderexp}$ on an open neighborhood
$U_p$ of $K$.
\end{assumption}

{For \eqref{eq:orbit-affine-transform}, this assumption follows by
shrinking $U_p$ so that $S^j(U_p)\subset U$ for $j=0,\dots,m$.
For \eqref{eq:factored-cycle}, restrict the finitely many intermediate
relaxed maps to a common local domain. Such restrictions are possible
because each intermediate map fixes $\M$.}

\begin{lemma}[{Polynomial transformation of the tangent--normal blocks}]
\label[lemma]{lem:poly-linearization}
Let $p$ be a consistent polynomial.
{Under \Cref{ass:geometry}, the maps
\eqref{eq:orbit-affine-transform} and \eqref{eq:factored-cycle}, whenever
locally well defined, satisfy \eqref{eq:first-order-polynomial-realization}.
For any first-order realization of a consistent polynomial,}
\begin{equation}
\label{eq:polynomial-linearization}
DH_p(x)=
\begin{pmatrix}
I&B_{p,x}\\
0&p(A_x)
\end{pmatrix}_{T_x\M\oplus N_x\M},
\end{equation}
{at fixed points $x\in\M$ near $K$, with}
\begin{equation}
\label{eq:polynomial-coupling-block}
B_{p,x}=B_xq_p(A_x).
\end{equation}
{Here $q_p(t):=(1-p(t))/(1-t)$ is a polynomial because $p(1)=1$.}
\end{lemma}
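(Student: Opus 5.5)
The proof has two parts. First, the two constructions \eqref{eq:orbit-affine-transform} and \eqref{eq:factored-cycle} are shown to be first-order realizations. Second, for any first-order realization, the block matrix $p(DS(x))$ is computed explicitly.

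\emph{Realization property.} Fix $x\in\M$ near $K$ and write $L:=DS(x)$. Since $S$ fixes $\M$ pointwise, $S^j(x)=x$ for all $j$. For the affine-combination map, $G_p^{\rm comb}(x)=\sum_j a_j x=p(1)x=x$. The chain rule, applied along the constant orbit $x,x,\dots$, gives $DS^j(x)=L^j$, and hence $DG_p^{\rm comb}(x)=\sum_j a_jL^j=p(L)$. For the factorized map, each relaxation step satisfies $R_{\omega_j}(x)=(1-\omega_j)x+\omega_jx=x$ and $DR_{\omega_j}(x)=(1-\omega_j)I+\omega_jL$. Because every intermediate point equals $x$, the chain rule gives the product $\prod_j\bigl((1-\omega_j)I+\omega_jL\bigr)$. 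The factors are polynomials in the single operator $L$, so they commute, and \eqref{eq:consistent-factorization} identifies the product with $p(L)$. Regularity $C^{r,\holderexp}$ follows from composition on the common local domain granted by \Cref{ass:well-defined}. The same computation shows the identity $H_p(x)=x$ holds on a relative neighborhood of $K$.

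\emph{Block structure.} By \eqref{eq:tangent-normal-block}, $L=\begin{pmatrix}I&B_x\\0&A_x\end{pmatrix}$ is block upper triangular. An induction on $j$ gives
\[
L^j=\begin{pmatrix}I&B_x\sum_{i=0}^{j-1}A_x^i\\0&A_x^j\end{pmatrix}.
\]
The upper-right block follows from $L^{j+1}=L^jL$, which yields the recursion $B^{(j+1)}=B_x+B^{(j)}A_x$, or equivalently $B^{(j+1)}=B_x\sum_{i\le j}A_x^i$. Summing with the coefficients $a_j$ gives the diagonal blocks $p(1)I=I$ and $p(A_x)$, and the lower-left block $0$. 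The upper-right block is $B_x\,r(A_x)$, where
\[
r(t)=\sum_j a_j\sum_{i<j}t^i=\sum_j a_j\frac{1-t^j}{1-t}.
\]
This equals $(p(1)-p(t))/(1-t)=(1-p(t))/(1-t)=q_p(t)$, since $p(1)=1$. So $r=q_p$ as polynomials, even though the rational expression is only formal. This proves \eqref{eq:polynomial-linearization} and \eqref{eq:polynomial-coupling-block}. The argument applies to any first-order realization, because only $DH_p(x)=p(DS(x))$ is used.

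The only mildly delicate point is confirming that the off-diagonal block is genuinely the polynomial $q_p$ evaluated at $A_x$. This holds even when $1\in\spec(A_x)$, because the identity is verified coefficientwise as polynomials rather than through $(I-A_x)^{-1}$. Everything else is routine chain-rule bookkeeping.
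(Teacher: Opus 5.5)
Your proposal is correct and follows the paper's proof essentially step for step. It applies the chain rule at the fixed point to both realizations, uses commutation of the relaxation factors as polynomials in $DS(x)$, proves the blocks of $DS(x)^j$ by induction, and obtains the coupling block from the identity $1-p(t)=(1-t)q_p(t)$. You also point out that this identity holds as an identity of polynomials, so $I-A_x$ need not be invertible; that is a correct clarification which the paper leaves implicit.
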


\begin{proof}
{At a fixed point $x$, $S^j(x)=x$ and $D(S^j)(x)=DS(x)^j$.
Thus the affine-combination map satisfies}
\[
G_p^{{\rm comb}}(x)=\sum_{j=0}^m a_jx=p(1)x=x,
\qquad
DG_p^{{\rm comb}}(x)=\sum_{j=0}^m a_jDS(x)^j=p(DS(x)).
\]
{Each relaxed factor also fixes $x$, so the chain rule and
\eqref{eq:consistent-factorization} give}
\[
 DG_p^{\rm cyc}(x)
 =\prod_{j=1}^m\bigl((1-\omega_j)I+\omega_jDS(x)\bigr)
 =p(DS(x)).
\]
{These derivative factors commute, since each is a polynomial in
$DS(x)$; the nonlinear maps need not commute away from $\M$.
It remains to compute the blocks of $p(DS(x))$. From
\eqref{eq:tangent-normal-block}, induction gives}
\[
(DS(x))^j=
\begin{pmatrix}
I&B_x\sum_{\ell=0}^{j-1}A_x^\ell\\
0&A_x^j
\end{pmatrix},
{\qquad j\geq0,}
\]
{where the sum is empty for $j=0$. The upper-right block is therefore}
\[
B_x\sum_{j=0}^ma_j\sum_{\ell=0}^{j-1}A_x^\ell
=B_xq_p(A_x),
\]
{using $1-p(t)=(1-t)q_p(t)$. The diagonal blocks are
$p(1)I=I$ and $p(A_x)$, which proves \eqref{eq:polynomial-linearization}.}
\end{proof}

{If $1\notin\spec(A_x)$, the identity
$(1-t)q_p(t)=1-p(t)$ gives the equivalent expressions}
\begin{equation}
\label{eq:polynomial-resolvent}
q_p(A_x)=(I-p(A_x))(I-A_x)^{-1},
\qquad
B_{p,x}=B_x(I-p(A_x))(I-A_x)^{-1}.
\end{equation}

\subsection{Invariance of the limiting-map derivative}
\label{subsec:polynomial-convergence}

{By \eqref{eq:polynomial-linearization},
\Cref{thm:limiting-map} applies to the polynomial-accelerated map $H_p$
when its normal blocks $p(A_x)$ are uniformly spectrally stable.
The following theorem specializes its conclusions and shows that the
limiting-map derivative at fixed points is independent of the polynomial
and its realization.}

\begin{theorem}[{Stable polynomial acceleration}]
\label{thm:main-polynomial}
{Under \Cref{ass:geometry,ass:well-defined}, let $p$ be consistent
and $H_p$ a first-order realization whose normal blocks satisfy}
\begin{equation}
\label{eq:transformed-normal-stability}
\rho_p:=\sup_{x\in K}\spr(p(A_x))<1.
\end{equation}
{Then all conclusions of \Cref{thm:limiting-map} hold with
$F=H_p$ and $\rho_*=\rho_p$. The resulting $C^r$ limiting map
$\Psi_p:V\to\M$ satisfies, for $x\in\M\cap V$,}
\begin{equation}
\label{eq:polynomial-limiting-derivative}
\Psi_p|_{\M\cap V}=\Id,\qquad
D\Psi_p(x)=
\begin{pmatrix}
I&C_x\\
0&0
\end{pmatrix},
\quad
C_x=B_x(I-A_x)^{-1}.
\end{equation}
{In particular, this derivative is independent of the polynomial and
its realization, provided \eqref{eq:transformed-normal-stability} holds.}
{For every $\widehat q\in(\rho_p,1)$, after shrinking $V$ if
necessary, there is a constant $C_{\widehat q}>0$ such that}
\begin{equation}
\label{eq:polynomial-linear-rate}
\|H_p^k(y)-\Psi_p(y)\|
\leq C_{\widehat q}\widehat q^{\,k}\dist(y,\M),
\qquad y\in V{,\quad k\geq0.}
\end{equation}
\end{theorem}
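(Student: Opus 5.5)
The plan is to reduce everything to \Cref{thm:limiting-map} applied with $F=H_p$. First we check its hypotheses. By \Cref{ass:well-defined}, $H_p$ is $C^{r,\holderexp}$ on an open neighborhood $U_p$ of $K$. By \eqref{eq:first-order-polynomial-realization}, $H_p$ fixes $\M$ pointwise on a relative neighborhood of $K$. After shrinking $U_p$, we may assume this relative neighborhood equals $\M\cap U_p$, so \Cref{ass:geometry} holds with $S=H_p$ and $U=U_p$. By \Cref{lem:poly-linearization}, the blocks of $DH_p(x)$ in \eqref{eq:normal-tangent-blocks} are $A^{H_p}_x=p(A_x)$ and $B^{H_p}_x=B_{p,x}=B_xq_p(A_x)$. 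Hypothesis \eqref{eq:transformed-normal-stability} is exactly uniform normal spectral stability for $H_p$ with $\rho_*=\rho_p$. Continuity of the bundle map $x\mapsto p(A_x)$ is immediate from that of $A_x$.

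\Cref{thm:limiting-map} then yields the neighborhood $V$ and conclusions (i)--(iii). In particular, the limiting map $\Psi_p:=\Psi_{H_p}$ is $C^r$ and fixes $\M\cap V$. Its derivative has the block form \eqref{eq:limiting-map-derivative} with off-diagonal block $B_{p,x}(I-p(A_x))^{-1}$. The rate \eqref{eq:polynomial-linear-rate} is conclusion (iii) verbatim, for any $\widehat q\in(\rho_p,1)$.

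The only genuine computation is identifying this off-diagonal block with $C_x=B_x(I-A_x)^{-1}$. Two facts are needed.
\begin{itemize}
\item Stability of $p(A_x)$ forces $1\notin\spec(A_x)$. If $A_xv=v$ with $v\neq0$ (complexified if necessary), then $p(A_x)v=p(1)v=v$. This contradicts $\spr(p(A_x))\le\rho_p<1$. More generally, the spectral mapping theorem gives $\spec(p(A_x))=p(\spec(A_x))$, and $p(1)=1$. Hence $I-A_x$ is invertible, and \eqref{eq:polynomial-resolvent} applies.
\item Since $I-p(A_x)$ and $(I-A_x)^{-1}$ are functions of $A_x$, they commute.
\end{itemize}
Using both facts,
\[
B_{p,x}(I-p(A_x))^{-1}=B_x(I-p(A_x))(I-A_x)^{-1}(I-p(A_x))^{-1}=B_x(I-A_x)^{-1}=C_x.
\]
This gives \eqref{eq:polynomial-limiting-derivative}. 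The right-hand side involves only $A_x$ and $B_x$ of the base map $S$. It therefore does not depend on $p$, nor on the choice of realization, since any two realizations share the same derivative at fixed points.

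The main (mild) obstacle is bookkeeping. The neighborhood on which $H_p$ fixes $\M$ must be an actual open-set intersection $\M\cap U_p$, as \Cref{ass:geometry} requires. Also, the formula for $C_x$ must be valid at every $x\in\M\cap V$, not just on $K$. The second point is handled by taking $V$ small enough that $\spr(p(A_x))<1$ persists there. This uses upper semicontinuity of the spectral radius and compactness of $K$, or simply the fact that \Cref{thm:limiting-map} already produces the inverse $(I-p(A_x))^{-1}$ on $\M\cap V$. With that, the invertibility argument above applies pointwise on $\M\cap V$.
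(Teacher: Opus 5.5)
Your proposal is correct and follows essentially the same route as the paper: apply \Cref{thm:limiting-map} to $F=H_p$ with the blocks from \Cref{lem:poly-linearization}, shrink $V$ so that $\spr(p(A_x))<1$ on $\M\cap V$, use spectral mapping with $p(1)=1$ to get $1\notin\spec(A_x)$, and cancel $I-p(A_x)$ via \eqref{eq:polynomial-resolvent}. Your additional bookkeeping is harmless and only makes explicit what the paper leaves implicit: you write the fixed-point set as an open-set intersection $\M\cap U_p$, and you note that the functions of $A_x$ commute.
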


\begin{proof}
{Apply \Cref{thm:limiting-map} using
\eqref{eq:polynomial-linearization} and
\eqref{eq:transformed-normal-stability}. The normal-to-tangent block of
$D\Psi_p(x)$ is $B_{p,x}(I-p(A_x))^{-1}$.
By continuity and compactness of $K$, shrink $V$ so that
$\spr(p(A_x))<1$ for $x\in\M\cap V$.
Since $p(1)=1$, spectral mapping also gives $1\notin\spec(A_x)$.
Thus \eqref{eq:polynomial-resolvent} yields}
\begin{equation}
\label{eq:polynomial-limiting-invariance}
\begin{aligned}
B_{p,x}(I-p(A_x))^{-1}
&={B_x(I-p(A_x))(I-A_x)^{-1}(I-p(A_x))^{-1}}\\
&=B_x(I-A_x)^{-1}.
\end{aligned}
\end{equation}
\end{proof}

\begin{remark}[{Retraction properties under polynomial acceleration}]
{The invariance in \eqref{eq:polynomial-limiting-invariance} concerns
derivatives at fixed points; limiting maps may differ away from $\M$.}
{Their stable fibers have the same tangent spaces at fixed
points by \Cref{rem:phase-stable-fibers}.
For $r=2$, \Cref{prop:base-tangent-jet} also gives the same quadratic
retraction expansion and second-order criterion
\eqref{eq:second-order-criterion}.
If, in addition, $B_z=0$ throughout a relative neighborhood of $K$ in
$\M$, \eqref{eq:polynomial-coupling-block} and
\Cref{thm:second-order}\textup{(ii)} give fourth-order agreement with
metric projection.}
\end{remark}

\begin{corollary}[{Fourth-order agreement} {of induced retractions}]
\label[corollary]{cor:realization-agreement}
{Suppose \Cref{thm:main-polynomial} applies with $r=2$
to first-order realizations $H_p$ and $\widetilde H_{\widetilde p}$ of
consistent polynomials $p$ and $\widetilde p$ for the same base map $S$.
Let $\Psi_p$ and $\widetilde\Psi_{\widetilde p}$ be their limiting maps on a
common open neighborhood $V$ of $K$.
Then, uniformly on compact subsets of $\M\cap V$,}
\begin{equation}
\label{eq:polynomial-realization-agreement}
 \|\Psi_p(x+\eta)-{\widetilde\Psi_{\widetilde p}}(x+\eta)\|
 =O(\|\eta\|^4),
 \qquad \eta\in T_x\M.
\end{equation}
\end{corollary}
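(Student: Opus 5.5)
We compare both limiting maps with the metric projection, taking the nearest-point decomposition as the common reference. For $x\in\M\cap V$ and small $\eta\in T_x\M$, set $z:=P_\M(x+\eta)$ and $n:=x+\eta-z$. By \Cref{lem:tangent-normal}, $n\in N_z\M$ and $\|n\|=O(\|\eta\|^2)$, uniformly for $x$ in a compact $K_0\subset\M\cap V$. After shrinking $\varepsilon$, the point $z$ and the segment $z+\tau n$, $0\le\tau\le1$, lie in a fixed compact subset of the common neighborhood $V$. On this set both $\Psi_p$ and $\widetilde\Psi_{\widetilde p}$ are $C^2$, so their second derivatives are uniformly bounded there.

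Next, we expand each map at the manifold point $z$, exactly as in \eqref{eq:normal-phase-expansion}. Taylor's theorem with the uniform $C^2$ bound gives
\[
\Psi_p(z+n)=z+D\Psi_p(z)n+O(\|n\|^2),\qquad
\widetilde\Psi_{\widetilde p}(z+n)=z+D\widetilde\Psi_{\widetilde p}(z)n+O(\|n\|^2).
\]
The key input is \eqref{eq:polynomial-limiting-derivative} in \Cref{thm:main-polynomial}, applied at the fixed point $z\in\M\cap V$. It gives $D\Psi_p(z)n=C_zn=D\widetilde\Psi_{\widetilde p}(z)n$, with the same $C_z=B_z(I-A_z)^{-1}$, because both realizations are built from the same base map $S$ and hence share the blocks $A_z,B_z$. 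Subtracting the two expansions cancels the $z$ and $C_zn$ terms and leaves
\[
\Psi_p(x+\eta)-\widetilde\Psi_{\widetilde p}(x+\eta)=O(\|n\|^2)=O(\|\eta\|^4).
\]
This is \eqref{eq:polynomial-realization-agreement}.

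The main obstacle is making sure the derivative identity holds at $z$ and not only at $x$, for both maps at once. This requires $z$ to lie in the common region where the stability conditions for both $p(A_z)$ and $\widetilde p(A_z)$ hold and where both limiting maps are defined. We secure this by intersecting the two neighborhoods produced by \Cref{thm:main-polynomial}, which the statement already assumes as $V$, and by choosing $\varepsilon$ so that $z$ stays in a compact subset of $\M\cap V$. Continuity of $P_\M$ from \Cref{lem:tubular-coordinates} gives such an $\varepsilon$. Uniformity of the constants then follows from compactness, as in the proof of \Cref{thm:second-order}(ii). Unlike that result, no assumption $B_z=0$ is needed: the first-order terms cancel because they coincide, not because they vanish.
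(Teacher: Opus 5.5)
Your proposal is correct and follows essentially the same argument as the paper. Both set $z=P_\M(x+\eta)$ and $n=x+\eta-z$ with $\|n\|=O(\|\eta\|^2)$. Both then Taylor-expand the two $C^2$ limiting maps at $z$ and cancel the first-order terms using the realization-independent derivative \eqref{eq:polynomial-limiting-derivative}. Uniformity comes from keeping $z$ and the segment $z+\tau n$ in a compact subset of $V$.
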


\begin{proof}
{Let $z=P_\M(x+\eta)$ and $n=x+\eta-z$.
Equation \eqref{eq:tangent-normal-displacement} gives
$n\in N_z\M$ and $\|n\|=O(\|\eta\|^2)$.
Both limiting maps fix $z$, and
\eqref{eq:polynomial-limiting-derivative} gives
$D\Psi_p(z)=D{\widetilde\Psi_{\widetilde p}}(z)$.
Since both maps are $C^2$, Taylor expansion at $z$ yields}
\[
{
\begin{aligned}
\Psi_p(x+\eta)-{\widetilde\Psi_{\widetilde p}}(x+\eta)
&=\bigl(D\Psi_p(z)-D{\widetilde\Psi_{\widetilde p}}(z)\bigr)n+O(\|n\|^2)\\
&=O(\|n\|^2)=O(\|\eta\|^4).
\end{aligned}
}
\]
{For base points in a fixed compact subset of $\M\cap V$,
choose a common perturbation radius so that $z$ and the segment
$z+\tau n$, $0\leq\tau\leq1$, remain in a compact subset of $V$.
The second derivatives of both limiting maps are then uniformly bounded,
giving the uniform estimate in \eqref{eq:polynomial-realization-agreement}.}
\end{proof}

{The comparison in
\eqref{eq:polynomial-realization-agreement} requires no zero-coupling
condition.
If the base normal blocks are also uniformly spectrally stable on $K$,
take $\widetilde p(\lambda)=\lambda$ and
$\widetilde H_{\widetilde p}=S$.
The estimate then gives fourth-order agreement between the accelerated
and unaccelerated induced retractions.}

\subsection{Relaxation and Chebyshev minimax polynomials}

{Classical polynomial acceleration selects a polynomial to control
the spectrum of an iteration operator \cite{Saad2003,Varga2000}.
In FPMA, \Cref{thm:main-polynomial} identifies the corresponding
objective as the uniform spectral radius $\rho_p$ in
\eqref{eq:transformed-normal-stability}. For a fixed degree bound
$m\geq1$, we therefore seek a polynomial that reduces $\rho_p$ while
retaining the consistency condition $p(1)=1$. Spectral mapping gives}
\begin{equation}
\label{eq:spectral-mapping}
\spr(p(A_x))
=\max_{\lambda\in\spec(A_x)}|p(\lambda)|.
\end{equation}
{Suppose that $\spec(A_x)\subset[a,b]$ for every $x\in K$,
where $a<b$ are known real bounds. Then
$\rho_p\leq\max_{\lambda\in[a,b]}|p(\lambda)|$, so we choose $p$
to minimize this interval bound. For the given interval and degree bound,
a \emph{minimax polynomial on $[a,b]$} is a consistent real polynomial
attaining the minimum}
\[
{
\min_{\substack{p\in\mathbb R[\lambda]\\
\deg p\leq m,\;p(1)=1}}
\max_{\lambda\in[a,b]}|p(\lambda)|.
}
\]
{An interval containing $1$ cannot yield a bound below $1$,
since $p(1)=1$. We therefore assume $1\notin[a,b]$ in the
Chebyshev construction below.}

{The simplest case is $m=1$: every consistent polynomial
has the form $p_\alpha(\lambda)=1-\alpha+\alpha\lambda$. Its
realization $(1-\alpha)\Id+\alpha S$ relaxes the entire base map $S$
and is the single-factor case of \eqref{eq:factored-cycle}.
On $[0,\rho]$, with $0<\rho<1$, its largest absolute value occurs
at an endpoint. Balancing the opposite endpoint values gives}
\begin{equation}
\label{eq:optimal-degree-one-relaxation}
\alpha_\ast=\frac{2}{2-\rho},
\qquad
\min_\alpha\max_{\lambda\in[0,\rho]}|p_\alpha(\lambda)|
=\frac{\rho}{2-\rho}.
\end{equation}
{For higher degree bounds, the minimizer is obtained from Chebyshev
polynomials.}

\subsubsection{Chebyshev minimax polynomial}
{Let $T_m$ be the Chebyshev polynomial of the first kind, defined by
$T_m(\cos\theta)=\cos(m\theta)$ for $m\geq0$.}
{Its recurrence, explicit real-variable formulas, and power
expansion are collected in \Cref{app:chebyshev-formulas}.}
{The following classical result identifies the interval minimax
polynomial in the normalization $p(1)=1$}
{\cite[Theorem~6.25]{Saad2003}; see also \cite{Varga2000}.}

\begin{theorem}[Chebyshev minimax polynomial among consistent polynomials]
\label{thm:chebyshev-minimax}
Let $a<b$, $m\geq1$, and $1\notin[a,b]$. Among all consistent real
polynomials of degree at most $m$, the unique minimax polynomial is
\begin{equation}
\label{eq:chebyshev-minimax-polynomial}
 p_m^{[a,b]}(\lambda)=
 \frac{T_m\!\left(\frac{2\lambda-a-b}{b-a}\right)}
 {T_m\!\left(\frac{2-a-b}{b-a}\right)}.
\end{equation}
Its minimax value is
\begin{equation}
\label{eq:chebyshev-minimax-value}
 \min_{\deg p\leq m,\,p(1)=1}
 \max_{\lambda\in[a,b]}|p(\lambda)|
 =\frac{1}{\left|T_m\!\left(\frac{2-a-b}{b-a}\right)\right|}.
\end{equation}
\end{theorem}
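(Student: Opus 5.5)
The plan is to move the interval to $[-1,1]$ by the affine map $\phi(\lambda)=\frac{2\lambda-a-b}{b-a}$ and then use an equioscillation argument. Under $\phi$, the interval $[a,b]$ becomes $[-1,1]$ and the consistency point $1$ becomes $\gamma:=\phi(1)=\frac{2-a-b}{b-a}$. The hypothesis $1\notin[a,b]$ gives $|\gamma|>1$. Since $\phi$ is affine and invertible, $p\mapsto p\circ\phi^{-1}$ is a degree-preserving bijection from real polynomials of degree at most $m$ with $p(1)=1$ onto real polynomials $r$ of degree at most $m$ with $r(\gamma)=1$. It also preserves sup norms: $\max_{[a,b]}|p|=\max_{[-1,1]}|r|$. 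It therefore suffices to show that $r_*(t):=T_m(t)/T_m(\gamma)$ is the unique minimizer of $\max_{[-1,1]}|r|$ under $r(\gamma)=1$, with value $1/|T_m(\gamma)|$.

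First I check that $r_*$ is well defined and admissible. For $|t|>1$ we have $|T_m(t)|=\cosh(m\,\mathrm{arccosh}|t|)>1$ (the real-variable formulas in \Cref{app:chebyshev-formulas}), so $T_m(\gamma)\neq0$. Then $r_*(\gamma)=1$, and since $|T_m|\le1$ on $[-1,1]$, we get $\max_{[-1,1]}|r_*|=1/|T_m(\gamma)|$. Composing with $\phi$ yields \eqref{eq:chebyshev-minimax-polynomial} together with the upper bound in \eqref{eq:chebyshev-minimax-value}.

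Optimality and uniqueness come together. Suppose $r$ has degree at most $m$, satisfies $r(\gamma)=1$, and $\max_{[-1,1]}|r|\le 1/|T_m(\gamma)|=:\mu$. Put $d:=r_*-r$, a polynomial of degree at most $m$ with $d(\gamma)=0$. At the $m+1$ extremal points $t_k=\cos(k\pi/m)$, $k=0,\dots,m$, we have $r_*(t_k)=(-1)^k\sigma\mu$, where $\sigma=\operatorname{sign}T_m(\gamma)$. Since $|r(t_k)|\le\mu$, the values $\sigma(-1)^k d(t_k)$ are all $\ge0$. This weak alternation is the main obstacle, because it gives only weak sign changes and zeros may coalesce.

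I will handle it by the standard counting argument: a polynomial that weakly alternates at $m+1$ ordered points has at least $m$ zeros in $[t_m,t_0]=[-1,1]$, counted with multiplicity. Where $d(t_k)=0$ at an interior point $t_k$ and $d$ does not change sign there, $t_k$ is a zero of even multiplicity, since $|r|\le\mu$ forces $r'(t_k)=r_*'(t_k)=0$. Adding the zero at $\gamma\notin[-1,1]$ gives $d$ at least $m+1$ zeros, so $d\equiv0$. This shows $r=r_*$, which proves both that no admissible polynomial beats $\mu$ and that the minimizer is unique.

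Pulling back through $\phi$ then gives the theorem. Alternatively, the whole argument can be cited directly from \cite[Theorem~6.25]{Saad2003}.
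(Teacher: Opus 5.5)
Your proof is correct. The reduction to $[-1,1]$, the candidate $r_*=T_m/T_m(\gamma)$, and the extremal nodes are the same as in the paper; the optimality-and-uniqueness step is argued differently.

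The paper expands $q_*-q$ in the Lagrange basis at the $m+1$ nodes and evaluates at the exterior point: $0=\sum_j L_j(\tau)\bigl(q_*(t_j)-q(t_j)\bigr)$. Since $L_j(\tau)$ has the sign of $q_*(t_j)$, every term is nonnegative, and $L_j(\tau)\neq0$ then forces $q$ and $q_*$ to agree at all nodes. This is a single linear identity with no multiplicity bookkeeping, and the weights $L_j(\tau)$ act as an explicit dual certificate.

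Your route is the classical zero-counting alternation argument. It is more familiar, but it must handle zeros that coalesce at the nodes. The fact you isolate closes this: $d'(t_k)=0$ at every interior node with $d(t_k)=0$, because $r$ attains $\pm\mu$ there while $|r|\le\mu$, and $T_m'(t_k)=0$. This gives multiplicity at least two whether or not $d$ changes sign, so your qualifier about sign changes and even multiplicity is unnecessary.

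To make the count explicit, take each of the $m$ intervals $[t_k,t_{k-1}]$. Pick a node where $d$ vanishes if there is one. Otherwise the endpoint values are nonzero with opposite signs, and the intermediate value theorem gives an interior zero. Only interior nodes can be picked twice, and these are at least double zeros. So $d$ has at least $m$ zeros in $[-1,1]$ counted with multiplicity, plus the zero at $\gamma$, which forces $d\equiv0$.
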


\begin{proof}
{Map $[a,b]$ to $[-1,1]$ and set}
\[
{\tau}=\frac{2-a-b}{b-a}.
\]
{Since $|\tau|>1$, the candidate $q_*(t)=T_m(t)/T_m(\tau)$
is well defined and has uniform norm $M:=1/|T_m(\tau)|$.
Let $t_j=\cos((m-j)\pi/m)$, $j=0,\ldots,m$, and let $L_j$ be the
Lagrange basis polynomials for these nodes. For either $\tau>1$ or
$\tau<-1$, $L_j(\tau)$ has the same sign as $q_*(t_j)$.
Any polynomial $q$ of degree at most $m$ with $q(\tau)=1$ and
$\max_{t\in[-1,1]}|q(t)|\leq M$ therefore satisfies}
\[
{L_j(\tau)\bigl(q_*(t_j)-q(t_j)\bigr)\geq0,\qquad j=0,\ldots,m.}
\]
{Interpolation at the exterior point gives}
\[
{0=q_*(\tau)-q(\tau)
=\sum_{j=0}^m L_j(\tau)\bigl(q_*(t_j)-q(t_j)\bigr).}
\]
{Every term is nonnegative and $L_j(\tau)\neq0$, so
$q(t_j)=q_*(t_j)$ at all $m+1$ nodes. Thus $q=q_*$, proving
both optimality and uniqueness.}
\end{proof}

{This optimality is relative to the chosen enclosing interval $[a,b]$.
On a smaller or disconnected spectral set, the same polynomial retains
the bound \eqref{eq:chebyshev-minimax-value} but need not be minimax.
Accordingly, an ``exact factor'' below means equality with the spectral
radius for the specified polynomial and family, and an ``attained endpoint''
means that the upper endpoint belongs to that family's spectral set.}

{The minimax polynomial in \eqref{eq:chebyshev-minimax-polynomial}
can be used in the affine-combination map \eqref{eq:orbit-affine-transform}.
To obtain the factorized map \eqref{eq:factored-cycle} through
\eqref{eq:consistent-factorization}, note that its roots are}
\begin{equation}
\label{eq:chebyshev-roots}
 \lambda_j
 =\frac{a+b}{2}
  +\frac{b-a}{2}
    \cos\!\left(\frac{(2j-1)\pi}{2m}\right),
 \qquad j=1,\ldots,m.
\end{equation}
{Since $1\notin[a,b]$, these roots differ from $1$, giving}
\begin{equation}
\label{eq:chebyshev-root-factorization}
 p_m^{[a,b]}(\lambda)
 =\prod_{j=1}^m\frac{\lambda-\lambda_j}{1-\lambda_j}
 =\prod_{j=1}^m
   \bigl(1-\omega_j+\omega_j\lambda\bigr),
 \qquad
 \omega_j:=\frac{1}{1-\lambda_j}.
\end{equation}
{The factorized map is thus evaluated by the cycle}
\begin{equation}
\label{eq:chebyshev-cycle}
 z_0=y,
 \qquad
 z_j=(1-\omega_j)z_{j-1}+\omega_jS(z_{j-1}),
 \quad j=1,\ldots,m,
 \qquad
 G_m^{\rm cyc}(y)=z_m.
\end{equation}
{Repeating this fixed map gives a stationary restarted scheme.
It stores a single iterate, whereas \eqref{eq:orbit-affine-transform}
combines the base-map iterates. Both constructions belong to the classical
semi-iterative tradition}
\cite{GolubVarga1961,Varga2000,WadayamaTakabe2021}.

{For the interval $[0,\rho]$, with $0<\rho<1$, we use the notation}
\begin{equation}
\label{eq:chebyshev-interval-polynomial}
 p_m^\rho(\lambda)=
 \frac{T_m(2\lambda/\rho-1)}{T_m(2/\rho-1)},
 \qquad
 C_m(\rho):=\frac{1}{|T_m(2/\rho-1)|}.
\end{equation}
{The minimax value satisfies}
\begin{equation}
\label{eq:chebyshev-geometric-bound}
 C_m(\rho)\leq
 2\left(\frac{1-\sqrt{1-\rho}}{1+\sqrt{1-\rho}}\right)^m.
\end{equation}
{For $m=1$, $p_m^\rho$ is the optimal relaxation in
\eqref{eq:optimal-degree-one-relaxation}.}

{The interval bound controls operator norm as well as spectral
radius when $A_x$ is self-adjoint. If $\spec(A_x)\subset[a,b]$, then}
\begin{equation}
\label{eq:normal-operator-polynomial-bound}
\|p_m^{[a,b]}(A_x)\|
=\max_{\lambda\in\spec(A_x)}
|p_m^{[a,b]}(\lambda)|
\leq
\max_{\lambda\in[a,b]}|p_m^{[a,b]}(\lambda)|.
\end{equation}
{The identity \eqref{eq:normal-operator-polynomial-bound} also
holds for a normal operator with real spectrum.
For nonnormal blocks, \eqref{eq:spectral-mapping} still holds but does not
imply a norm bound with unit prefactor or control transient growth.
The nonlinear convergence estimate \eqref{eq:polynomial-linear-rate}
instead allows a prefactor and every rate $\widehat q\in(\rho_p,1)$.}

\subsection{{Spectral factors per base-map evaluation and stabilization}}

{The two realizations above use $m$ evaluations of $S$ per
application. To compare it with ordinary iteration, we account for this
cost in the spectral factor. This comparison describes the first-order
asymptotic rate; it is not a finite-step nonlinear complexity estimate.}

\begin{definition}[{Spectral factor per base-map evaluation}]\label[definition]{def:acc-stab}
{Let $\rho_S:=\sup_{x\in K}\spr(A_x)<1$, and let $p$ be a consistent
polynomial of degree $m\geq1$. For a first-order realization requiring
at most $m$ evaluations of $S$ per application, charge $m$ evaluations
and define its spectral factor per base-map evaluation as $\rho_p^{1/m}$,
where $\rho_p$ is given in \eqref{eq:transformed-normal-stability}.
It improves on the base factor $\rho_S$ precisely when}
\begin{equation}
\label{eq:per-evaluation-acceleration}
\rho_p^{1/m} < \rho_S,
\qquad\text{equivalently}\qquad
\rho_p < \rho_S^m.
\end{equation}
{We use $0^{1/m}=0$. Both \eqref{eq:orbit-affine-transform} and
\eqref{eq:factored-cycle} meet the evaluation bound.}
\end{definition}

{Unlike $\rho_p<\rho_S$ alone, \eqref{eq:per-evaluation-acceleration}
accounts for the cost of a polynomial step. The Chebyshev interval bound
gives the following sufficient comparison.}

\begin{proposition}[Finite-degree Chebyshev improvement per base-map evaluation]
\label[proposition]{prop:finite-chebyshev-factor}
Let $0<\rho<1$. For every $m\geq1$, the Chebyshev minimax value on
$[0,\rho]$ satisfies
\begin{equation}
\label{eq:finite-chebyshev-factor}
 C_m(\rho)^{1/m}
 \leq \frac{\rho}{2-\rho}
 <\rho.
\end{equation}
{Consequently, if $\spec(A_x)\subset[0,\rho]$ for $x\in K$ and
$\rho_S=\rho$, every positive Chebyshev degree yields a smaller spectral
factor per base-map evaluation.}
\end{proposition}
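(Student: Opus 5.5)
The plan is to prove the first inequality by showing that $m\mapsto C_m(\rho)^{1/m}$ is bounded by its value at $m=1$, using the product form of Chebyshev polynomials outside $[-1,1]$. Set $\tau:=2/\rho-1>1$ and write $\tau=\cosh\phi$ with $\phi>0$. Then $T_m(\tau)=\cosh(m\phi)$, so
\[
C_m(\rho)=\frac{1}{\cosh(m\phi)},\qquad C_1(\rho)=\frac{1}{\tau}=\frac{\rho}{2-\rho},
\]
where the last identity matches \eqref{eq:optimal-degree-one-relaxation}. The claim $C_m(\rho)^{1/m}\le\rho/(2-\rho)$ then reduces to
\[
\cosh(m\phi)\ge\cosh(\phi)^m,\qquad m\ge1,\ \phi>0.
\]

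I will prove this by induction from the addition formula
\[
\cosh((m+1)\phi)=\cosh(m\phi)\cosh\phi+\sinh(m\phi)\sinh\phi\ge\cosh(m\phi)\cosh\phi,
\]
which holds because $\sinh(m\phi)\sinh\phi\ge0$. An alternative route is supermultiplicativity via $\log\cosh$: since $\log\cosh$ is convex with value $0$ at $0$, $\log\cosh(m\phi)\ge m\log\cosh\phi$. Either argument gives the inequality directly.

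The strict inequality $\rho/(2-\rho)<\rho$ is equivalent to $1<2-\rho$, that is, $\rho<1$, which is given.

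For the consequence, I combine three earlier results. If $\spec(A_x)\subset[0,\rho]$ for all $x\in K$, then spectral mapping \eqref{eq:spectral-mapping} gives
\[
\rho_p\le\max_{[0,\rho]}|p_m^\rho|=C_m(\rho)
\]
for $p=p_m^\rho$, where the equality is \eqref{eq:chebyshev-minimax-value}. Hence
\[
\rho_p^{1/m}\le C_m(\rho)^{1/m}<\rho=\rho_S,
\]
which is exactly \eqref{eq:per-evaluation-acceleration}. I do not expect any real obstacle here. The only point needing care is to note that $T_m(\tau)>0$ for $\tau>1$, so the absolute value in $C_m$ is harmless, and that $\cosh$ parametrizes $\tau>1$.
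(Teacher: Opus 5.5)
Your proof is correct, but it takes a different route from the paper's. The paper never evaluates $T_m(2/\rho-1)$. It notes that $(p_1^\rho)^m$ is itself a consistent polynomial of degree $m$, so minimax optimality (\eqref{eq:chebyshev-minimax-value}, from \Cref{thm:chebyshev-minimax}) immediately gives $C_m(\rho)\le\max_{[0,\rho]}|p_1^\rho|^m=(\rho/(2-\rho))^m$.

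You instead use the closed form $T_m(\cosh\phi)=\cosh(m\phi)$ and reduce the claim to $\cosh(m\phi)\ge(\cosh\phi)^m$. Your addition-formula induction, or the convexity of $\log\cosh$, settles this correctly.

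Your route is self-contained: it does not need the minimax theorem for the first inequality. It also gives strict inequality $C_m(\rho)^{1/m}<\rho/(2-\rho)$ for $m\ge2$, since $\sinh(m\phi)\sinh\phi>0$.

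The paper's comparison argument is shorter and more general, because it uses only optimality against a single competitor. The same reasoning gives $E_m\le E_1^m$ for the consistent minimax values $E_m$ on any compact spectral set excluding $1$, where no Chebyshev closed form is available.

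Your treatment of the consequence, via \eqref{eq:spectral-mapping} and the identity $\max_{[0,\rho]}|p_m^\rho|=C_m(\rho)$, is exactly what the statement requires.
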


\begin{proof}
{The polynomial $(p_1^\rho)^m$ is consistent and has degree $m$.
By \eqref{eq:chebyshev-minimax-value} and
\eqref{eq:optimal-degree-one-relaxation},}
\[
 C_m(\rho)
 \leq
 \max_{\lambda\in[0,\rho]}|p_1^\rho(\lambda)|^m
 =\left(\frac{\rho}{2-\rho}\right)^m.
\]
{Taking $m$th roots and using $0<\rho<1$ proves
\eqref{eq:finite-chebyshev-factor}.}
\end{proof}

\begin{proposition}[Asymptotic Chebyshev factor per base-map evaluation]
\label[proposition]{prop:asymptotic-chebyshev-factor}
{For $0<\rho<1$, the minimax values $C_m(\rho)$ in
\eqref{eq:chebyshev-interval-polynomial} satisfy}
\begin{equation}
\label{eq:asymptotic-chebyshev-factor}
\lim_{m\to\infty}C_m(\rho)^{1/m}
=
\frac{1-\sqrt{1-\rho}}{1+\sqrt{1-\rho}}
=:r(\rho),
\end{equation}
{with $r(\rho)<\rho$.}
\end{proposition}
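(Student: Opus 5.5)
The plan is to compute $C_m(\rho)$ in closed form using the explicit real-variable formula for Chebyshev polynomials outside $[-1,1]$. For $|t|\ge1$,
\[
T_m(t)=\tfrac12\Bigl(\bigl(t+\sqrt{t^2-1}\bigr)^m+\bigl(t-\sqrt{t^2-1}\bigr)^m\Bigr),
\]
as collected in \Cref{app:chebyshev-formulas}. I would evaluate this at $t=\tau:=2/\rho-1>1$, since $0<\rho<1$. Then I would identify the two roots $\sigma_\pm:=\tau\pm\sqrt{\tau^2-1}$, which satisfy $\sigma_+\sigma_-=1$ and $\sigma_+>1>\sigma_->0$.

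The main algebraic step is to simplify $\sigma_-$ in terms of $\rho$. Compute
\[
\tau^2-1=\frac{4(1-\rho)}{\rho^2},\qquad \sqrt{\tau^2-1}=\frac{2\sqrt{1-\rho}}{\rho}.
\]
This gives
\[
\sigma_\pm=\frac{2-\rho\pm2\sqrt{1-\rho}}{\rho}=\frac{(1\pm\sqrt{1-\rho})^2}{\rho}.
\]
Using $\rho=(1-\sqrt{1-\rho})(1+\sqrt{1-\rho})$, one gets $\sigma_-=r(\rho)$ and $\sigma_+=1/r(\rho)$.

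For the limit, write
\[
C_m(\rho)=\frac{2}{\sigma_+^m+\sigma_-^m}=\frac{2\,r(\rho)^m}{1+r(\rho)^{2m}}.
\]
Taking $m$th roots gives
\[
C_m(\rho)^{1/m}=r(\rho)\,2^{1/m}\bigl(1+r(\rho)^{2m}\bigr)^{-1/m}.
\]
Both correction factors tend to $1$ because $0<r(\rho)<1$, which proves \eqref{eq:asymptotic-chebyshev-factor}. As a byproduct, this recovers \eqref{eq:chebyshev-geometric-bound}, since $1+r^{2m}\ge1$.

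For the strict inequality $r(\rho)<\rho$, set $s:=\sqrt{1-\rho}\in(0,1)$, so that $\rho=1-s^2$. Then $r=(1-s)/(1+s)$, and $r<\rho$ is equivalent to $1<(1+s)^2$ after dividing by $1-s>0$. This holds since $s>0$. Alternatively, the strict inequality follows immediately by letting $m\to\infty$ in \Cref{prop:finite-chebyshev-factor}, which gives $r(\rho)\le\rho/(2-\rho)<\rho$; I would mention this route as a check.

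The only step needing care is the root simplification, specifically verifying $\sigma_-=r(\rho)$. Everything else is routine limits.
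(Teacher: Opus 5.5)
Your proposal is correct and follows essentially the same route as the paper: both evaluate $T_m(2/\rho-1)$ through the closed form in \eqref{eq:chebyshev-real-formulas}, write $C_m(\rho)^{1/m}$ as $r(\rho)$ times correction factors tending to $1$, and obtain $r(\rho)<\rho$ from $1<(1+s)^2$ with $s=\sqrt{1-\rho}$. Your identity $C_m(\rho)=2r(\rho)^m/(1+r(\rho)^{2m})$ is the same one the paper records later in \eqref{eq:ap-degree-angle-factor}.
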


\begin{proof}
Set $t=2/\rho-1>1$ and $\xi=t+\sqrt{t^2-1}$.
{By \eqref{eq:chebyshev-real-formulas} and
\eqref{eq:chebyshev-interval-polynomial}} \cite{MasonHandscomb2003},
\[
{
C_m(\rho)^{1/m}
=\xi^{-1}\left(\frac{2}{1+\xi^{-2m}}\right)^{1/m}
\longrightarrow\xi^{-1}.
}
\]
With
$s=\sqrt{1-\rho}\in(0,1)$,
\[
\xi^{-1}=\frac{1-s}{1+s}=r(\rho)<1-s^2=\rho,
\]
where the strict inequality follows from $1<(1+s)^2$.
\end{proof}

{If the chosen interval endpoint exceeds the actual base factor
$\rho_S$, the bound $C_m(\rho)^{1/m}<\rho_S$ still guarantees improvement,
but is not necessary: the exact comparison uses $\rho_p$ in
\eqref{eq:per-evaluation-acceleration}. When $\rho=\rho_S$,
\eqref{eq:finite-chebyshev-factor} guarantees improvement for every
$m\geq1$.}

{Polynomial transformation can also produce convergence when the
base map is not stable. We call $p$ \emph{stabilizing on $K$} if
\eqref{eq:transformed-normal-stability} holds, without requiring
$\rho_S<1$. The consistency condition creates one obstruction:
by \eqref{eq:spectral-mapping}, an eigenvalue $1$ of $A_x$ remains
an eigenvalue $1$ of $p(A_x)$ and prevents strict stability. A real
spectral interval excluding $1$ admits the following construction.}

\begin{corollary}[Finite-degree Chebyshev stabilization]
\label[corollary]{cor:finite-cheb-stabilization}
Let $S$ satisfy \Cref{ass:geometry}, and suppose its normal blocks are
self-adjoint and satisfy
\[
\spec(A_x)\subset[a,b],
\qquad x\in K,
\]
where {$a<b$ and} $1\notin[a,b]$. Then there exists $m_0\geq1$ such that, for every
$m\geq m_0$,
\[
\sup_{x\in K}\|p_m^{[a,b]}(A_x)\|<1.
\]
{For each fixed such $m$, a realization satisfying
\Cref{ass:well-defined} therefore has the convergence and derivative
properties of \Cref{thm:main-polynomial}, and the retraction properties
of \Cref{thm:second-order} under its additional hypotheses.}
\end{corollary}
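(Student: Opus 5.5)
The plan is to combine the operator-norm identity \eqref{eq:normal-operator-polynomial-bound} for self-adjoint blocks with the minimax value \eqref{eq:chebyshev-minimax-value}, and then to show that this value tends to zero as $m\to\infty$.

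Since each $A_x$ is self-adjoint with $\spec(A_x)\subset[a,b]$, \eqref{eq:normal-operator-polynomial-bound} gives, for every $x\in K$ and $m\geq1$,
\[
\|p_m^{[a,b]}(A_x)\|\leq\max_{\lambda\in[a,b]}|p_m^{[a,b]}(\lambda)|=\frac{1}{|T_m(\tau)|},
\qquad \tau:=\frac{2-a-b}{b-a}.
\]
The right-hand side does not depend on $x$, so the supremum over $K$ is bounded by $1/|T_m(\tau)|$. It therefore suffices to show that $|T_m(\tau)|>1$ for all large $m$; in fact, $|T_m(\tau)|\to\infty$.

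Because $1\notin[a,b]$, either $1>b$ or $1<a$. If $1>b$, then $2-a-b>b-a$, so $\tau>1$. If $1<a$, then $2-a-b<a-b$, so $\tau<-1$. In both cases $|\tau|>1$. Using $T_m(-t)=(-1)^mT_m(t)$ and the real-variable formula from \Cref{app:chebyshev-formulas},
\[
|T_m(\tau)|=\tfrac12\bigl(\xi^m+\xi^{-m}\bigr),\qquad \xi:=|\tau|+\sqrt{\tau^2-1}>1.
\]
This grows geometrically in $m$. One can therefore take $m_0$ to be the smallest $m\geq1$ with $\tfrac12(\xi^m+\xi^{-m})>1$. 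Since $|T_m(\tau)|$ is increasing in $m$, every $m\geq m_0$ then satisfies $\sup_{x\in K}\|p_m^{[a,b]}(A_x)\|<1$. Generically $m_0=1$ already works, because $|T_1(\tau)|=|\tau|>1$. The statement only asks for existence, so no sharper choice is needed.

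For the final claims, note that $\spr\le\|\cdot\|$, so $\rho_p<1$ and \eqref{eq:transformed-normal-stability} holds. \Cref{thm:main-polynomial} then applies to any first-order realization satisfying \Cref{ass:well-defined}, such as \eqref{eq:orbit-affine-transform} or the factorized cycle \eqref{eq:chebyshev-cycle}. Its limiting map satisfies \Cref{thm:limiting-map} for $F=H_p$, so \Cref{thm:second-order} transfers when $r=2$.

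The only point requiring care is the sign case $\tau<-1$, where the base map is not stable. There one has to use the parity of $T_m$ rather than assume $\tau>1$. The remaining steps are direct quotations of earlier results.
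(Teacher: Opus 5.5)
Your proposal is correct and follows essentially the paper's proof. The self-adjoint norm identity \eqref{eq:normal-operator-polynomial-bound} and the minimax value \eqref{eq:chebyshev-minimax-value} bound the supremum by $1/|T_m(\tau)|$, and $|\tau|>1$ (which you verify carefully in both sign cases) makes this bound tend to zero. One small correction: since $|T_m(\tau)|=\cosh(m\operatorname{arcosh}|\tau|)>1$ for every $m\geq1$ whenever $|\tau|>1$, your choice always gives $m_0=1$, not merely generically.
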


\begin{proof}
{Set $t_1=(2-a-b)/(b-a)$. Since $|t_1|>1$,
\eqref{eq:chebyshev-real-formulas} gives $|T_m(t_1)|\to\infty$. Equations
\eqref{eq:normal-operator-polynomial-bound} and
\eqref{eq:chebyshev-minimax-value} give}
\[
\sup_{x\in K}\|p_m^{[a,b]}(A_x)\|
\leq
\max_{\lambda\in[a,b]}|p_m^{[a,b]}(\lambda)|
=
|T_m(t_1)|^{-1}{\longrightarrow0.}
\]
{Choose $m_0$ so that the last quantity is below $1$ for every
$m\geq m_0$.}
\end{proof}

\begin{remark}[Scope of the stationary analysis]
{In the affine-combination map \eqref{eq:orbit-affine-transform},
the coefficients are prescribed by a fixed polynomial.
For linear problems, Krylov methods and Anderson acceleration also admit
a polynomial interpretation, but can select combinations using evolving
subspaces or stored residuals} \cite{Saad2025}.
{Such data-dependent coefficients generally change with the
iteration, so the fixed-map analysis above does not directly apply.
In \eqref{eq:chebyshev-cycle}, the relaxation parameters vary within a
cycle, while the same composite map is repeated.}
{For each fixed degree and root ordering, local well-definedness
follows by shrinking the neighborhood; neither this neighborhood nor the
nonlinear prefactor need be uniform in the degree.}

{Different root orderings give the same derivative at fixed points
and the same spectral factor, but can change intermediate growth,
roundoff behavior, and the region of convergence. To moderate intermediate
products, our implementation uses a greedy Leja ordering
\cite[Algorithm~3.1]{BaiHuReichel1994}: start with the largest root,
then select the remaining root that maximizes the product
of its distances from the selected roots. The theory does not compare
the nonlinear or numerical effects of these orderings.}
\end{remark}
\section{Chebyshev-accelerated alternating projections}
\label{sec:applications}

{For the iteration error $e_k:=F^k(y)-\Psi_F(y)$,
\Cref{thm:limiting-map}\textup{(iii)} gives the local complexity of
achieving $\|e_k\|\leq\varepsilon$ for a tolerance $\varepsilon>0$.
Fix an admissible factor
$\widehat q\in(\rho_*,1)$ and an initial point $y$ in its corresponding
neighborhood. By \eqref{eq:limiting-map-rate}, the error tolerance is
met after at most}
\[
{
\left\lceil
\frac{\log\!\bigl(1+C_{\widehat q}\dist(y,\M)/\varepsilon\bigr)}
{-\log\widehat q}
\right\rceil
=O\!\left(\frac{\log(1/\varepsilon)}{-\log\widehat q}\right)
\quad(\varepsilon\downarrow0)
}
\]
{applications of $F$. Choosing $\widehat q=(1+\rho_*)/2$
gives the spectral-gap bound $O((1-\rho_*)^{-1}\log(1/\varepsilon))$
on its corresponding neighborhood.
For a fixed degree-$m$ polynomial realization using at most $m$
base-map evaluations per cycle, \Cref{thm:main-polynomial} gives
the same cycle bound with $\widehat q\in(\rho_p,1)$.
Charging $m$ base-map evaluations per cycle as in \Cref{def:acc-stab}
gives}
\[
{
O\!\left(\frac{m\log(1/\varepsilon)}{-\log\widehat q}\right)
=O\!\left(\frac{\log(1/\varepsilon)}{-\log(\widehat q^{1/m})}\right)
}
\]
{base-map evaluations. These estimates hold for a fixed problem,
initial point, and polynomial degree as $\varepsilon\downarrow0$;
the local neighborhood and prefactor depend on the chosen factor.
We therefore compare ordinary AP and Chebyshev acceleration through their
normal spectral factors, both per cycle and per AP evaluation.
The tangent-space geometry supplies the real spectral interval for this
comparison; vanishing normal-to-tangent coupling also gives the
retraction properties.}

\subsection{Exact alternating projections}\label{sec:exact-ap}

{Throughout this section, let $\Mone$, $\Mtwo$, and
$\M=\Mone\cap\Mtwo$ be $C^{r+1,\holderexp}$ embedded submanifolds,
with $r\in\{1,2\}$ and compact $K\subset\M$. Assume clean intersection
near $K$ as in \eqref{eq:clean-intersection}. Restrict to an ambient
neighborhood where both metric projections, and their composition
$S_{\rm AP}=P_{\Mone}P_{\Mtwo}$, are single-valued and
$C^{r,\holderexp}$. At fixed points, the chain rule gives}
\begin{equation}
\label{eq:exact-ap-derivative}
 DS_{\rm AP}(x)=P_{T_x\Mone}P_{T_x\Mtwo}.
\end{equation}

\begin{lemma}[Vanishing normal-to-tangential coupling for exact AP]
\label[lemma]{lem:exact-linearization}
{At $x\in\M$ near $K$, the derivative
\eqref{eq:exact-ap-derivative} is the identity on $T_x\M$ and preserves
$N_x\M$. Its decomposition \eqref{eq:tangent-normal-block} therefore
has $B_x^{\rm AP}=0$ and normal block}
\begin{equation}
\label{eq:exact-ap-normal-block}
A_x^{\rm AP}
=P_{N_x\M}P_{T_x\Mone}P_{T_x\Mtwo}|_{N_x\M}.
\end{equation}
\end{lemma}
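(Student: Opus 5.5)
The plan is to work purely with the linear algebra of the two orthogonal projectors $P_1:=P_{T_x\Mone}$ and $P_2:=P_{T_x\Mtwo}$ at a fixed $x\in\M$, using the clean-intersection identity \eqref{eq:clean-intersection}, $T_x\M=T_x\Mone\cap T_x\Mtwo$. The derivative formula \eqref{eq:exact-ap-derivative} is taken as given.

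\emph{Step 1: identity on $T_x\M$.} For $\eta\in T_x\M$, clean intersection gives $\eta\in T_x\Mone$ and $\eta\in T_x\Mtwo$. Hence $P_2\eta=\eta$ and then $P_1\eta=\eta$, so $P_1P_2\eta=\eta$. This recovers the tangent block $I$ and the zero tangent-to-normal block. These were already guaranteed by the fixed-point property, but the direct computation makes the lemma self-contained.

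\emph{Step 2: invariance of $N_x\M$.} Let $W:=T_x\M$. The key observation is that each $P_i$ commutes with $P_W$. Indeed, since $W\subset T_x\M_i$, we have $P_iP_W=P_W$. Taking adjoints, and using that orthogonal projectors are self-adjoint, gives $P_WP_i=P_W$. Hence $P_i$ maps $W^\perp=N_x\M$ into itself: for $n\in N_x\M$, $P_WP_in=P_Wn=0$. Composing, $P_1P_2$ preserves $N_x\M$.

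\emph{Step 3: read off the blocks.} By \eqref{eq:normal-tangent-blocks}, $B_x^{\rm AP}=P_W P_1P_2|_{N_x\M}$, which vanishes by Step 2. Also $A_x^{\rm AP}=P_{N_x\M}P_1P_2|_{N_x\M}$, which is \eqref{eq:exact-ap-normal-block}; since $N_x\M$ is invariant, the outer projector could even be dropped. There is no real obstacle in this argument. The only point needing care is Step 2, specifically the adjoint argument $P_WP_i=(P_iP_W)^*$, which relies on orthogonality of the projectors and on the inclusion $W\subset T_x\M_i$ supplied by cleanness. The argument is pointwise, so it holds at every $x\in\M$ in the neighborhood where \eqref{eq:exact-ap-derivative} is valid.
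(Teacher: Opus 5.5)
Your proof is correct and is essentially the paper's argument: both rely only on $P_{T_x\Mone}$ and $P_{T_x\Mtwo}$ being self-adjoint and fixing $T_x\M$. The paper handles the composition in one step via $\langle P_{T_x\Mone}P_{T_x\Mtwo}n,v\rangle=\langle n,P_{T_x\Mtwo}P_{T_x\Mone}v\rangle=\langle n,v\rangle=0$, whereas you show that each projector separately commutes with $P_{T_x\M}$; one small remark is that the inclusion $T_x\M\subset T_x\M_i$ you use already follows from $\M\subset\M_i$, so cleanness is not needed for this lemma.
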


\begin{proof}
{Both projectors fix $T_x\M$. For $n\in N_x\M$ and
$v\in T_x\M$, their self-adjointness gives}
\[
 \langle P_{T_x\Mone}P_{T_x\Mtwo}n,v\rangle
 =\langle n,P_{T_x\Mtwo}P_{T_x\Mone}v\rangle
 =\langle n,v\rangle=0.
\]
{Thus the normal space is invariant. This additional property of
exact AP gives $B_x^{\rm AP}=0$; fixing $\M$ alone does not force
vanishing coupling.}
\end{proof}

For $i=1,2$, define the reduced tangent spaces
\[
 E_i(x)=T_x\M_i\cap(T_x\M)^\perp,
 \qquad
 c_F(x):=c_F(T_x\Mone,T_x\Mtwo)
 =\|P_{E_1(x)}P_{E_2(x)}\|.
\]
{Clean intersection gives $c_F(x)<1$. To identify the spectrum of
\eqref{eq:exact-ap-normal-block}, introduce the self-adjoint compression}
\begin{equation}
\label{eq:exact-ap-compression}
{\widetilde A_x:=}P_UP_VP_U|_{U\ominus W},
\qquad
U=T_x\Mone,\quad V=T_x\Mtwo,\quad W=U\cap V,
\quad U\ominus W:=U\cap W^\perp.
\end{equation}
{Its eigenvalues are the squared cosines of the reduced principal
angles, together with any unmatched zero modes} 
\cite{BauschkeBelloCruzNghiaPhanWang2016}.

\begin{lemma}[Spectrum of the exact-AP normal block]\label[lemma]{lem:ap-spectrum}
{For $x\in\M$ near $K$, the normal block satisfies}
\begin{equation}
\label{eq:exact-ap-spectrum}
 \spec(A_x^{\rm AP})\subset[0,c_F(x)^2],
 \qquad
 \spr(A_x^{\rm AP})=c_F(x)^2.
\end{equation}
\end{lemma}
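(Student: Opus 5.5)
We work in the fixed space at $x$ with $U=T_x\Mone$, $V=T_x\Mtwo$, $W=U\cap V=T_x\M$ (clean intersection), so $N_x\M=W^\perp$. By \Cref{lem:exact-linearization}, $L:=P_UP_V$ preserves $N_x\M$, hence $A_x^{\rm AP}=L|_{W^\perp}$. Also $L$ fixes $W$, and $P_U$, $P_V$ both commute with $P_W$ because $W\subset U$ and $W\subset V$. Hence $L|_{W^\perp}=P_{U_0}P_{V_0}|_{W^\perp}$, where $U_0=U\ominus W=E_1(x)$ and $V_0=V\ominus W=E_2(x)$. So the problem reduces to the reduced spaces, with no intersection left: $U_0\cap V_0=\{0\}$.

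The first step is to relate the nonzero spectrum of $L_0:=P_{U_0}P_{V_0}$ to that of the self-adjoint compression $\widetilde A_x=P_{U_0}P_{V_0}P_{U_0}|_{U_0}$ from \eqref{eq:exact-ap-compression}. We use the standard fact that $XY$ and $YX$ have the same nonzero eigenvalues. Take $X=P_{U_0}$ and $Y=P_{V_0}$; then $P_{U_0}P_{V_0}$ and $P_{V_0}P_{U_0}$ share nonzero spectrum. Applying the same fact with $X=P_{U_0}P_{V_0}$ and $Y=P_{U_0}$ gives $P_{U_0}P_{V_0}P_{U_0}$ versus $P_{U_0}P_{U_0}P_{V_0}=L_0$. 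Thus the nonzero spectrum of $L_0$ on $W^\perp$ equals the nonzero spectrum of the positive semidefinite operator $P_{U_0}P_{V_0}P_{U_0}$. All remaining eigenvalues are $0$, since the space is finite dimensional and the spectrum is the eigenvalue set.

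The second step bounds the spectrum of $P_{U_0}P_{V_0}P_{U_0}=(P_{V_0}P_{U_0})^*(P_{V_0}P_{U_0})$. This operator is self-adjoint and positive semidefinite, so its spectrum lies in $[0,\|P_{V_0}P_{U_0}\|^2]=[0,\|P_{E_1}P_{E_2}\|^2]=[0,c_F(x)^2]$, using $\|T^*\|=\|T\|$. Its largest eigenvalue equals its norm $c_F(x)^2$. Combining with the first step gives $\spec(A_x^{\rm AP})\subset[0,c_F(x)^2]$.

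For the equality $\spr(A_x^{\rm AP})=c_F(x)^2$, split into two cases. If $c_F(x)>0$, the top eigenvalue $c_F(x)^2$ of the compression is nonzero, so it transfers to $L_0$ by the first step; concretely, the principal vector $u_1\in U_0$ is an eigenvector. If $c_F(x)=0$, then $P_{U_0}P_{V_0}=0$ and the spectral radius is $0$ by the convention $\spr:=0$ on trivial spaces. We also note $c_F(x)=\|P_{E_1}P_{E_2}\|$ agrees with \Cref{def:friedrichs-principal}, since the supremum of $\langle u,v\rangle$ over unit vectors is that operator norm. The main point requiring care is the first step: justifying the reduction of $A_x^{\rm AP}$ to $P_{U_0}P_{V_0}$ on $W^\perp$ via the commutation with $P_W$, and tracking zero eigenvalues, because $A_x^{\rm AP}$ itself is nonnormal.
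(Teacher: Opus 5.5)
Your proof is correct and follows essentially the paper's route: identify the nonzero spectrum of $A_x^{\rm AP}=P_UP_V|_{W^\perp}$ with that of the positive semidefinite compression, then bound the compression by $c_F(x)^2$ and use its top eigenvector. The differences are only in the tools: you use the $XY$/$YX$ nonzero-spectrum lemma after writing $P_UP_V=P_W+P_{U_0}P_{V_0}$, where the paper transfers eigenvectors directly via $z=\lambda^{-1}P_UP_Vz\in U\ominus W$; and you use $\|T^*T\|=\|T\|^2$, where the paper uses $\langle P_UP_VP_Uz,z\rangle=\|P_Vz\|^2$. One small imprecision: in the case $c_F(x)=0$ with $N_x\M\neq\{0\}$, the block is the zero operator, so $\spr(A_x^{\rm AP})=0$ directly; the convention $\spr:=0$ is needed only when $N_x\M=\{0\}$.
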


\begin{proof}
{Use $U,V,W$ from \eqref{eq:exact-ap-compression}. By
\Cref{lem:exact-linearization}, $A_x^{\rm AP}=P_UP_V|_{W^\perp}$.
If}
\[
z\in W^\perp,\qquad
A_x^{\rm AP}z=P_UP_Vz=\lambda z,\qquad \lambda\neq0,
\]
then $z=\lambda^{-1}P_UP_Vz\in U\cap W^\perp=U\ominus W$.
Hence $P_Uz=z$ and
\[
P_UP_VP_Uz=\lambda z.
\]
{Conversely, a nonzero eigenvector of $\widetilde A_x$ belongs
to $U\ominus W\subset W^\perp$ and is an eigenvector of
$A_x^{\rm AP}$ with the same eigenvalue. The nonzero spectra therefore
coincide. The compression is positive semidefinite, and for $z\in U\ominus W$,}
\[
 \langle P_UP_VP_Uz,z\rangle=\|P_Vz\|^2
 \leq c_F(x)^2\|z\|^2.
\]
{Its spectral radius equals
$\|\widetilde A_x\|=c_F(x)^2$. Any remaining eigenvalues of the
normal block are zero. These conclusions also hold when a reduced
space is trivial, by the convention in \Cref{def:friedrichs-principal}.}
\end{proof}

\begin{theorem}[{Chebyshev acceleration of exact AP}]
\label{thm:chebyshev-exact-ap}
{Under the standing hypotheses of this section, set}
\begin{equation}
\label{eq:exact-ap-uniform-factor}
 \bar c_F:=\sup_{x\in K}c_F(x),\qquad
 \rho_{\rm AP}:=\sup_{x\in K}\spr(A_x^{\rm AP})=\bar c_F^2.
\end{equation}
{Assume $0<\bar c_F<1$. For $m\geq1$, let $H_m$ be a locally
well-defined affine-combination or factorized realization of
$p_m^{\bar c_F^2}$ for $S_{\rm AP}$, as in
\eqref{eq:orbit-affine-transform} or \eqref{eq:factored-cycle},
with a fixed root ordering in the factorized case. Then the following
properties hold.}
\begin{enumerate}[label={\textup{(\roman*)}},leftmargin=*]
\item {The exact uniform normal spectral factor per accelerated cycle is}
\begin{equation}
\label{eq:exact-ap-chebyshev-factor}
 q_m
 :=\sup_{x\in K}
   \spr\bigl(p_m^{\bar c_F^2}(A_x^{\rm AP})\bigr)
 =C_m(\bar c_F^2)
 =\frac{1}{\left|T_m(2/\bar c_F^2-1)\right|}
 <1.
\end{equation}
{The equality uses the upper spectral endpoint
$\bar c_F^2$. Minimax optimality remains relative to the enclosing
interval $[0,\bar c_F^2]$, as in \Cref{thm:chebyshev-minimax}.}
\item {For this fixed $m$, the iterates converge locally to a $C^r$
limiting map $\Psi_m$. Every $\widehat q\in(q_m,1)$ is an admissible
local R-linear convergence factor in \eqref{eq:polynomial-linear-rate},
with $H_p=H_m$ and one iteration corresponding to a full accelerated cycle.}
\item {The spectral factor per base AP evaluation satisfies}
\begin{equation}
\label{eq:exact-ap-finite-work-acceleration}
 q_m^{1/m}
 \leq\frac{\bar c_F^2}{2-\bar c_F^2}
 <\rho_{\rm AP},
\end{equation}
{Thus every positive degree has a smaller spectral factor per AP evaluation
than ordinary AP under the cost convention of \Cref{def:acc-stab}.}
\end{enumerate}
\end{theorem}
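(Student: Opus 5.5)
The proof combines the spectral information of \Cref{lem:ap-spectrum} with the general polynomial theory of \Cref{thm:main-polynomial} and the interval bounds of \Cref{prop:finite-chebyshev-factor}. Set $\rho:=\bar c_F^2\in(0,1)$.

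\emph{Part (i).} By \Cref{lem:ap-spectrum}, $\spec(A_x^{\rm AP})\subset[0,c_F(x)^2]\subset[0,\rho]$ for every $x\in K$. The spectral mapping identity \eqref{eq:spectral-mapping} then gives
\[
\spr\bigl(p_m^{\rho}(A_x^{\rm AP})\bigr)\le \max_{\lambda\in[0,\rho]}|p_m^\rho(\lambda)| = C_m(\rho),
\]
by \eqref{eq:chebyshev-minimax-value}. Hence $q_m\le C_m(\rho)$.

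For the reverse inequality we use the attained endpoint. Since $K$ is compact and $x\mapsto c_F(x)$ is continuous, there is $x_0\in K$ with $c_F(x_0)=\bar c_F$. By \Cref{lem:ap-spectrum}, $\spr(A_{x_0}^{\rm AP})=\rho$, and because the spectrum lies in $[0,\rho]$, we get $\rho\in\spec(A_{x_0}^{\rm AP})$. The Chebyshev polynomial satisfies $T_m(2\rho/\rho-1)=T_m(1)=1$, so $|p_m^\rho(\rho)|=1/|T_m(2/\rho-1)|=C_m(\rho)$. Therefore $q_m\ge C_m(\rho)$, and equality holds.

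Finally, $2/\rho-1>1$, and $|T_m(t)|>1$ for $|t|>1$ by \eqref{eq:chebyshev-real-formulas}. This gives $q_m<1$.

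The one point needing care is the continuity of $c_F$ used to attain the supremum. This was noted before \eqref{eq:friedrichs-uniform-bound}. If one prefers to avoid it, upper semicontinuity also works: take a sequence $x_k\in K$ with $c_F(x_k)\to\bar c_F$, and use $\spr(p(A_{x_k}))\ge|p(c_F(x_k)^2)|\to C_m(\rho)$. This argument still establishes the supremum equality.

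\emph{Part (ii).} \Cref{lem:poly-linearization} shows that both $G^{\rm comb}$ and $G^{\rm cyc}$ for the consistent polynomial $p_m^\rho$ are first-order realizations. Their normal blocks are $p_m^\rho(A_x^{\rm AP})$. By the local well-definedness assumption, \Cref{ass:well-defined} holds. Together with \Cref{ass:geometry} for $S_{\rm AP}$ (a $C^{r,\holderexp}$ map fixing $\M$), part (i) gives $\rho_p=q_m<1$. \Cref{thm:main-polynomial} then yields the $C^r$ limiting map $\Psi_m$ and the rate estimate \eqref{eq:polynomial-linear-rate} for every $\widehat q\in(q_m,1)$, where one iteration of $H_m$ is one cycle.

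\emph{Part (iii).} By part (i), $q_m=C_m(\rho)$. \Cref{prop:finite-chebyshev-factor} gives $C_m(\rho)^{1/m}\le\rho/(2-\rho)<\rho=\rho_{\rm AP}$. The identity $\rho_{\rm AP}=\bar c_F^2$ itself follows from $\spr(A_x^{\rm AP})=c_F(x)^2$ by taking the supremum over $K$. Each realization uses exactly $m$ AP evaluations per cycle, which matches the cost convention of \Cref{def:acc-stab}.
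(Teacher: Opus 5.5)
Your proposal is correct and follows the paper's proof essentially step by step. It obtains $q_m\le C_m(\bar c_F^2)$ from \Cref{lem:ap-spectrum}, spectral mapping, and the minimax value, gets the matching lower bound from an endpoint attained at some $x_*\in K$ with $c_F(x_*)=\bar c_F$, and then uses \Cref{lem:poly-linearization} with \Cref{thm:main-polynomial} for (ii) and \Cref{prop:finite-chebyshev-factor} for (iii). Your explicit check of $q_m<1$ via $|T_m(t)|>1$ and your maximizing-sequence variant are valid additions to what the paper leaves implicit; the latter needs only the definition of the supremum and continuity of $p_m^{\bar c_F^2}$, not semicontinuity.
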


\begin{proof}
{Equation \eqref{eq:exact-ap-spectrum} gives
$\rho_{\rm AP}=\bar c_F^2$ and encloses all normal spectra in
$[0,\bar c_F^2]$. By \eqref{eq:spectral-mapping} and
\eqref{eq:chebyshev-minimax-value}, $q_m\leq C_m(\bar c_F^2)$.
Since $c_F(x)=\|P_{T_x\Mone}P_{T_x\Mtwo}-P_{T_x\M}\|$ is continuous,
compactness gives $x_*\in K$ with $c_F(x_*)=\bar c_F$. The endpoint
$\bar c_F^2$ is therefore an eigenvalue of $A_{x_*}^{\rm AP}$, and}
\[
 q_m\geq
 \left|p_m^{\bar c_F^2}(\bar c_F^2)\right|
 =C_m(\bar c_F^2).
\]
{This proves \eqref{eq:exact-ap-chebyshev-factor}, while
\eqref{eq:finite-chebyshev-factor} gives
\eqref{eq:exact-ap-finite-work-acceleration}. Both realizations satisfy
\eqref{eq:first-order-polynomial-realization} by
\Cref{lem:poly-linearization}. Applying \Cref{thm:main-polynomial}
then gives convergence and regularity. The finite projection or relaxed
stages are locally well defined after restricting their domains as in
\Cref{ass:well-defined}.}

{The case $\bar c_F=0$ is excluded only because the interval formula
degenerates. Its normal spectrum is $\{0\}$, which already gives
local spectral stability without a Chebyshev construction.}
\end{proof}

\begin{remark}[Compression norm versus the ambient normal block]
\label[remark]{rem:exact-ap-compression-norm}
{For $\widetilde A_x$ in \eqref{eq:exact-ap-compression}, functional
calculus gives}
\begin{equation}
\label{eq:exact-ap-compression-bound}
 \|p_m^{\bar c_F^2}(\widetilde A_x)\|
 \leq C_m(\bar c_F^2).
\end{equation}
{Although $\widetilde A_x$ and $A_x^{\rm AP}$ share their nonzero
spectra, the ambient normal block can be nonnormal. The bound
\eqref{eq:exact-ap-compression-bound} is therefore asserted for the
compression; for the nonlinear AP map, convergence follows from
\Cref{thm:chebyshev-exact-ap}\textup{(ii)} and
\eqref{eq:polynomial-linear-rate}, with a prefactor and any local
R-linear factor $\widehat q\in(q_m,1)$.}
\end{remark}

\begin{corollary}[Retraction properties of Chebyshev-accelerated exact AP]
\label[corollary]{cor:chebyshev-exact-ap-geometry}
{Under \Cref{thm:chebyshev-exact-ap}, $\Psi_m$ induces a local
retraction on $\M$, and $D\Psi_m(x)=P_{T_x\M}$ at fixed points,
as for ordinary AP. If $r=2$, the retraction is second-order and,
uniformly for $x$ on compact subsets of its domain in $\M$ and
sufficiently small $\eta\in T_x\M$,}
\begin{equation}
\label{eq:chebyshev-ap-fourth-order-agreement}
 \|\Psi_m(x+\eta)-P_\M(x+\eta)\|=O(\|\eta\|^4).
\end{equation}
{These derivative and retraction properties, including
\eqref{eq:chebyshev-ap-fourth-order-agreement} when $r=2$, are
independent of the realization and root ordering. This does not require
the nonlinear limiting maps to coincide away from $\M$.}
\end{corollary}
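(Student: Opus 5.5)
The plan is to combine three results already in hand: the zero-coupling identity for exact AP (\Cref{lem:exact-linearization}), the polynomial transformation of the blocks (\Cref{lem:poly-linearization}), and the invariance statement of \Cref{thm:main-polynomial}. Then \Cref{thm:second-order} will give the retraction conclusions.

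First, by \Cref{thm:chebyshev-exact-ap}\textup{(ii)}, $H_m$ has a $C^r$ limiting map $\Psi_m$ on a neighborhood $V$ of $K$, and it fixes $\M\cap V$. By the discussion after \Cref{rem:phase-stable-fibers}, $\Retr^{H_m}_x(\eta)=\Psi_m(x+\eta)$ is then a local retraction. For the derivative, \Cref{lem:exact-linearization} gives $B_x^{\rm AP}=0$ at every $x\in\M$ near $K$. Hence \eqref{eq:polynomial-limiting-derivative} yields $C_x=B_x^{\rm AP}(I-A_x^{\rm AP})^{-1}=0$, so $D\Psi_m(x)=P_{T_x\M}$. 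The required invertibility of $I-A_x^{\rm AP}$ on $\M\cap V$ follows from $\spec(A_x^{\rm AP})\subset[0,c_F(x)^2]$ with $c_F(x)<1$, after shrinking $V$ using continuity of $c_F$ (\Cref{lem:ap-spectrum}). The same formula for ordinary AP, from \Cref{thm:limiting-map}, gives the ``as for ordinary AP'' clause.

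For $r=2$, I will apply \Cref{thm:second-order} with $F=H_m$. This is legitimate because \Cref{thm:main-polynomial} states that all hypotheses and conclusions of \Cref{thm:limiting-map} hold with $F=H_m$ and $\rho_*=q_m$.

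\begin{itemize}
\item Part \textup{(i)} of \Cref{thm:second-order} needs $C_x\II_x=0$, and this holds since $C_x=0$.
\item Part \textup{(ii)} needs the transformed coupling $B_{p,z}$ of $H_m$ to vanish throughout $\M\cap V$. By \eqref{eq:polynomial-coupling-block}, $B_{p,z}=B_z^{\rm AP}q_p(A_z^{\rm AP})=0$ for every $z\in\M\cap V$, because $B_z^{\rm AP}=0$ pointwise there.
\end{itemize}

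Part \textup{(ii)} then gives \eqref{eq:chebyshev-ap-fourth-order-agreement} uniformly on compact subsets of $\M\cap V$, together with the second-order property throughout.

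Independence of the realization and root ordering holds because every ingredient used above depends only on $DH_m(x)=p_m^{\bar c_F^2}(DS_{\rm AP}(x))$. That derivative is the same for the affine-combination realization and for any ordering of the factors, since the linear factors commute (\Cref{lem:poly-linearization}). The bound \eqref{eq:chebyshev-ap-fourth-order-agreement} compares each $\Psi_m$ with the realization-free map $P_\M$. No claim is made that different $\Psi_m$ agree away from $\M$; \Cref{cor:realization-agreement} only supplies their mutual $O(\|\eta\|^4)$ agreement.

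The main obstacle I expect is domain bookkeeping. The relation $B_z=0$ must hold on a full relative neighborhood of $K$, not just on $K$, so that part \textup{(ii)} applies at the projection points $z=P_\M(x+\eta)$, which may leave $K$. I would handle this by choosing $V$ inside the region where both metric projections are single-valued and $C^{r,\holderexp}$. There \Cref{lem:exact-linearization} holds at every point of $\M\cap V$, and the uniform spectral bound $\sup\spr(p(A_z))<1$ is kept by continuity and compactness.
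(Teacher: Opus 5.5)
Your proposal is correct and follows the paper's proof essentially step for step: after shrinking $V$, \Cref{lem:exact-linearization} gives $B_x^{\rm AP}=0$ on $\M\cap V$, so \eqref{eq:polynomial-coupling-block} gives $B_{p,x}=0$ and \eqref{eq:polynomial-limiting-derivative} gives $C_x=0$; then \Cref{thm:limiting-map} yields the retraction, and \Cref{thm:second-order}\textup{(ii)} yields the second-order property and \eqref{eq:chebyshev-ap-fourth-order-agreement}. Your extra shrinking to make $I-A_x^{\rm AP}$ invertible is harmless but not needed, since $1\notin\spec(A_x^{\rm AP})$ already follows from clean intersection, or from spectral mapping as in the proof of \Cref{thm:main-polynomial}.
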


\begin{proof}
{After shrinking the limiting-map neighborhood $V$,
\Cref{lem:exact-linearization} gives $B_x^{\rm AP}=0$ for every
$x\in\M\cap V$. For $p=p_m^{\bar c_F^2}$,
\eqref{eq:polynomial-coupling-block} therefore yields}
\[
{
B_{p,x}=B_x^{\rm AP}q_p(A_x^{\rm AP})=0,
\qquad x\in\M\cap V.
}
\]
{Thus \eqref{eq:polynomial-limiting-derivative} gives
$C_x=0$ and $D\Psi_m(x)=P_{T_x\M}$ for either realization.
The induced retraction follows from \Cref{thm:limiting-map}.
When $r=2$, the neighborhood identity $B_{p,x}=0$ permits application
of \Cref{thm:second-order}\textup{(ii)}, giving both the second-order
property and \eqref{eq:chebyshev-ap-fourth-order-agreement}.}
\end{proof}

{The factorized and affine-combination maps in
\eqref{eq:factored-cycle} and \eqref{eq:orbit-affine-transform} give
\Cref{alg:cheb-ap-cycle,alg:cheb-ap-orbit}, respectively, using
\eqref{eq:chebyshev-interval-polynomial} and \eqref{eq:chebyshev-roots}.
The degree, endpoint, and root ordering are fixed across restarts, so
each algorithm repeats a stationary map. In \Cref{thm:chebyshev-exact-ap},
$\rho=\bar c_F^2$.}
{Both algorithms use $m$ AP evaluations per cycle and share the spectral
factors $q_m$ per cycle and $q_m^{1/m}$ per AP evaluation from
\Cref{thm:chebyshev-exact-ap}.
This evaluation count covers the cycle updates and excludes any additional
work required by the stopping test. For $r=2$, their
limiting maps also satisfy \eqref{eq:polynomial-realization-agreement}.
\Cref{subsec:numerics-rates,subsec:numerics-geometry} compare their
spectral factors and retraction errors numerically.}

{The Boolean stopping test $\operatorname{Stop}(y)$ returns true when the
chosen stopping criterion is met, and false otherwise. It is evaluated
only at the initial point and at completed-cycle endpoints, with no checks
on the intermediate iterates $z_j$. For example, an approximate-feasibility
test is $\max_{i=1,2}\dist(y,\M_i)\leq\varepsilon$, where
$\varepsilon>0$ is a prescribed tolerance.}

\begin{algorithm}[tb]
\caption{Factorized Chebyshev realization of AP}
\label{alg:cheb-ap-cycle}
\begin{algorithmic}[1]
\Require Initial point $y$, degree $m\geq1$, endpoint $\rho\in(0,1)$,
and {a Boolean stopping test $\operatorname{Stop}(y)$}
\State $S_{\rm AP}\gets P_{\Mone}P_{\Mtwo}$
\For{$j=1,\ldots,m$}
  \State $\lambda_j\gets
  \frac{\rho}{2}\left(1+\cos\frac{(2j-1)\pi}{2m}\right)$
  \State $\omega_j\gets(1-\lambda_j)^{-1}$
\EndFor
\State Order the pairs $(\lambda_j,\omega_j)$ once
       \Comment{greedy Leja order in the experiments}
\While{$\operatorname{Stop}(y)$ is false}
  \State $z_0\gets y$
  \For{$j=1,\ldots,m$}
    \State $z_j\gets(1-\omega_j)z_{j-1}
      +\omega_j S_{\rm AP}(z_{j-1})$
  \EndFor
  \State $y\gets z_m$
\EndWhile
\Ensure $y$ {such that $\operatorname{Stop}(y)$ is true}
\Statex \Comment{$m$ base AP evaluations per cycle}
\end{algorithmic}
\end{algorithm}

\begin{algorithm}[tb]
\caption{{Affine-combination} Chebyshev realization of AP}
\label{alg:cheb-ap-orbit}
\begin{algorithmic}[1]
\Require Initial point $y$, degree $m\geq1$, endpoint $\rho\in(0,1)$,
and {a Boolean stopping test $\operatorname{Stop}(y)$}
\State $S_{\rm AP}\gets P_{\Mone}P_{\Mtwo}$
\State Compute $a_0,\ldots,a_m$ from
  $p_m^\rho(\lambda)=\sum_{j=0}^m a_j\lambda^j$
\While{$\operatorname{Stop}(y)$ is false}
  \State $z_0\gets y$
  \For{$j=1,\ldots,m$}
    \State $z_j\gets S_{\rm AP}(z_{j-1})$
  \EndFor
  \State $y\gets\sum_{j=0}^m a_jz_j$
\EndWhile
\Ensure $y$ {such that $\operatorname{Stop}(y)$ is true}
\Statex \Comment{$m$ base AP evaluations and $m+1$ {iterates} per cycle}
\end{algorithmic}
\end{algorithm}

\subsection{{Subspace comparison and angle scaling}}

{We compare the ambient-interval design with the sharper spectral
information available for subspaces.} For two linear subspaces $U,V\subset\E$, {with $0<c_F(U,V)<1$,} write
$c_F:=c_F(U,V)$ and $\theta_F:=\theta_F(U,V)$, and define the
self-adjoint compression
\[
\mathcal C_{U,V}:=P_UP_VP_U|_{U\ominus(U\cap V)}.
\]
{Let $e_k$ and $e_k^{(m)}$ denote the iteration errors in
$U\ominus(U\cap V)$ for ordinary AP and degree-$m$ Chebyshev cycles,
respectively. With $\rho=c_F^2$, the identity
$\|\mathcal C_{U,V}\|=\rho$ and
\eqref{eq:exact-ap-compression-bound} give}
\[
{
\begin{aligned}
e_{k+1}&=\mathcal C_{U,V}e_k,
&\|e_{k+1}\|&\leq\rho\|e_k\|,\\
e_{k+1}^{(m)}&=p_m^\rho(\mathcal C_{U,V})e_k^{(m)},
&\|e_{k+1}^{(m)}\|&\leq C_m(\rho)\|e_k^{(m)}\|.
\end{aligned}
}
\]
{The corresponding spectral factors per AP evaluation are
$\rho$ and $C_m(\rho)^{1/m}$.
The degree-one Chebyshev factor is $c_F^2/(2-c_F^2)$; by
\eqref{eq:asymptotic-chebyshev-factor}, the limit as $m\to\infty$
at fixed $c_F$ is}
\begin{equation}
\label{eq:subspace-chebyshev-factor}
 \frac{1-\sqrt{1-c_F^2}}{1+\sqrt{1-c_F^2}}
 =\frac{1-\sin\theta_F}{1+\sin\theta_F}.
\end{equation}

{For two-subspace iteration errors in the compression space, a positive
lower spectral bound may also be available.}  Let $\theta_{\max}$ denote the largest principal
angle represented in the compression, with
$\theta_{\max}=\pi/2$ by convention when an unmatched zero mode is retained.
Then the spectrum of $\mathcal C_{U,V}$ lies in
$[\cos^2\theta_{\max},\cos^2\theta_F]$.  Knyazev, Jujunashvili, and Argentati
\cite{KnyazevJujunashviliArgentati2010} placed this iteration in an
optimal-polynomial and conjugate-gradient framework.  Bello-Cruz
\cite[Section~6.4{, Proposition~6.8,} and Theorem~6.9]{BelloCruz2026} recently gave an explicit
Chebyshev semi-iteration on this full interval.  Equivalently, it applies
Chebyshev acceleration to $I-\mathcal C_{U,V}$ on
$[\sin^2\theta_F,\sin^2\theta_{\max}]$, {with limiting
spectral factor per AP evaluation as $m\to\infty$}
\begin{equation}
\label{eq:full-interval-chebyshev-factor}
 \frac{\sin\theta_{\max}-\sin\theta_F}
      {\sin\theta_{\max}+\sin\theta_F}.
\end{equation}
{Equation \eqref{eq:full-interval-chebyshev-factor} reduces to
\eqref{eq:subspace-chebyshev-factor} when $\theta_{\max}=\pi/2$,
and is smaller when $\theta_{\max}<\pi/2$. Our interval $[0,c_F^2]$
uses only the upper bound, allowing zero normal modes in the ambient
space and nonlinear realizations whose iterates need not stay in either
individual manifold.}

\begin{remark}[{Large-degree spectral factor and Friedrichs-angle scaling}]
\label[remark]{rem:ap-square-root}
{For the compact set $K$ in \Cref{thm:chebyshev-exact-ap},
write $\theta_F:=\arccos\bar c_F\in(0,\pi/2)$.
This is the minimum of the pointwise Friedrichs angles on $K$.
The ordinary AP factor in \eqref{eq:exact-ap-uniform-factor} satisfies}
\begin{equation}
\label{eq:ap-friedrichs-gap-scaling}
\rho_{\rm AP}=\bar c_F^2=\cos^2\theta_F,
\qquad
1-\rho_{\rm AP}=\sin^2\theta_F\sim\theta_F^2
\quad(\theta_F\downarrow0),
\end{equation}
{For comparison with Chebyshev acceleration, set}
\begin{equation}
\label{eq:friedrichs-chebyshev-factor}
r_F:=\frac{1-\sin\theta_F}{1+\sin\theta_F}.
\end{equation}
{At each fixed $\theta_F>0$,
\eqref{eq:asymptotic-chebyshev-factor} and the exact minimax value give}
\begin{equation}
\label{eq:ap-degree-angle-factor}
C_m(\bar c_F^2)=\frac{2r_F^m}{1+r_F^{2m}},
\qquad
C_m(\bar c_F^2)^{1/m}\leq2^{1/m}r_F,
\qquad
\lim_{m\to\infty}C_m(\bar c_F^2)^{1/m}=r_F.
\end{equation}
{Thus $r_F$ is the large-degree limit of the spectral factor per AP
evaluation for the interval design $[0,\bar c_F^2]$.
After taking this limit, letting
$\theta_F\downarrow0$ yields}
\begin{equation}
\label{eq:ap-log-factor-angle-scaling}
-\log r_F
=2\operatorname{artanh}(\sin\theta_F)
\sim2\theta_F.
\end{equation}
{Let $N$ count base AP evaluations and fix a target
$\varepsilon\in(0,1)$. The scalar envelope comparisons
$\rho_{\rm AP}^N\leq\varepsilon$ and $r_F^N\leq\varepsilon$
then give evaluation thresholds with respective asymptotic sizes}
\[
{
\frac{\log(1/\varepsilon)}{-\log\rho_{\rm AP}}
\sim\theta_F^{-2}\log(1/\varepsilon),
\qquad
\frac{\log(1/\varepsilon)}{-\log r_F}
\sim\frac{\log(1/\varepsilon)}{2\theta_F}.
}
\]
{These comparisons take $m\to\infty$ at fixed angle before
the small-angle limit. Keeping $m$ fixed gives a different scaling:
\eqref{eq:ap-degree-angle-factor} implies}
\[
{
\begin{aligned}
-\log\bigl(C_m(\bar c_F^2)^{1/m}\bigr)
&=\frac1m\log\cosh\!\bigl(2m\operatorname{artanh}(\sin\theta_F)\bigr)\\
&\sim2m\theta_F^2
\qquad(\theta_F\downarrow0,\ m\text{ fixed}).
\end{aligned}
}
\]
{The spectral comparison gives no uniform nonlinear work
bound, since the convergence neighborhood and R-linear prefactor
may depend on $m$.}

{For two linear subspaces with Friedrichs angle $\theta_F$, GAP with
the angle-based relaxation parameters of \cite[Theorem~4.2]{FaltGiselsson2023} has normal spectral
factor $r_F$ per iteration. Each GAP iteration uses one projection onto
each subspace, so its factor is measured at the same projection cost
as one base AP evaluation. For the interval design $[0,\bar c_F^2]$, the large-degree limit in
\eqref{eq:ap-degree-angle-factor} matches this GAP factor,
while $C_m(\bar c_F^2)^{1/m}>r_F$ for every finite $m$.
The GAP maps themselves are analyzed in \Cref{sec:gap}.}
\end{remark}

\section{Extensions to alternating-projection-type maps}
\label{sec:extensions}

{Projection approximations offer a way to reduce the cost of
exact AP, while relaxation introduces parameters for improving its
convergence rate. We study conditions under which these variants admit
local convergence, smooth limiting maps, and induced retractions within
FPMA, and under which polynomial acceleration improves their spectral
factors. For inexact AP, we first state an approximation condition that
preserves the derivative of the composed AP map at fixed points.
For relaxed and generalized AP, we identify the normal spectra and the
stable parameter ranges.}

{Throughout this section, use the geometric setting of
\Cref{sec:exact-ap}: $r\in\{1,2\}$, $0<\holderexp\leq1$, and
$\Mone$, $\Mtwo$, and $\M=\Mone\cap\Mtwo$ are
$C^{r+1,\holderexp}$ embedded submanifolds. Their intersection is clean
throughout a relative neighborhood $\mathcal O$ of the compact set
$K\subset\M$, as in \eqref{eq:clean-intersection}. The local metric
projections are single-valued and $C^{r,\holderexp}$ on a common ambient
open neighborhood of $\mathcal O$.}

\subsection{First-order projection surrogates and inexact alternating projections}
\label{sec:projection-surrogates}

A $C^1$ map $\phi_i$ {on an ambient neighborhood of $K$}
is called a \emph{first-order projection surrogate
for $\M_i$ along $\M$} if, at every $x\in\M$ near $K$,
\begin{equation}
\label{eq:projection-surrogate-jet}
  \phi_i(x)=x,
  \qquad
  D\phi_i(x)=P_{T_x\M_i}.
\end{equation}
Since $DP_{\M_i}(x)=P_{T_x\M_i}$, {these equalities mean that
$\phi_i$ and $P_{\M_i}$ agree in value and derivative at every $x\in\M$ near $K$. The derivatives are taken in the ambient space.}
Equivalently, for each such $x$,
\[
  \phi_i(y)-P_{\M_i}(y)=o(\|y-x\|)
  \qquad\text{as }y\to x.
\]
We do not require $\phi_i$ to take values in $\M_i$; when
$\phi_i(y)\in\M_i$ throughout its local domain, we call the surrogate
$\M_i$-valued.

{The first-order consistency condition in
\cite[Definition~4.1 and Lemma~4.1]{ChenHeHuang2026},}
\[
  \|\phi_i(y)-P_{\M_i}(y)\|
  =o\bigl(\dist(y,\M_i)\bigr),
\]
{implies \eqref{eq:projection-surrogate-jet} under $C^1$ regularity;
the converse need not hold. Thus \eqref{eq:projection-surrogate-jet}
permits a broader class of local approximations than that consistency condition.}

{For the convergence statements below, let each $\phi_i$ be
single-valued and $C^{r,\holderexp}$ on an ambient open neighborhood of
$\mathcal O$, and require \eqref{eq:projection-surrogate-jet} throughout
$\mathcal O$. Assume that $S=\phi_1\circ\phi_2$ is well defined and
$C^{r,\holderexp}$ on a common ambient open neighborhood of
$\mathcal O$. We call $S$ an inexact AP map (iAP).
The chain rule gives the derivative of exact AP, while
\Cref{lem:exact-linearization} identifies its blocks:}
\begin{equation}
\label{eq:iap-linearization}
DS(x)=P_{T_x\Mone}P_{T_x\Mtwo},
\qquad
 A_x^{\rm iAP}=A_x^{\rm AP},
 \qquad
 B_x^{\rm iAP}=0,\qquad x\in\mathcal O.
\end{equation}
{In particular, $S$ fixes $\mathcal O$. Its normal spectra and vanishing
coupling are independent of whether the component maps are
$\M_i$-valued. This allows direct application of the polynomial theorem.}

\begin{corollary}[{Polynomial transformation of inexact AP}]
\label[corollary]{cor:polynomial-iap}
{Under the preceding hypotheses, including
\eqref{eq:projection-surrogate-jet}, let $p$
be consistent and let $H_p$ be a first-order realization for $S$ satisfying
\Cref{ass:well-defined}. Suppose that}
\[
\rho_p^{\rm iAP}:=
\sup_{x\in K}\spr\bigl(p(A_x^{\rm iAP})\bigr)<1.
\]
{For $y$ in a sufficiently small open neighborhood $V$ of $K$, the
sequence $\{H_p^k(y)\}_{k\geq0}$ is well defined and converges.
Its limit defines a $C^r$ map $\Psi_p:V\to\M$. Every
$\widehat q\in(\rho_p^{\rm iAP},1)$ is an admissible local R-linear
factor in \eqref{eq:polynomial-linear-rate}, and the limiting map induces
a local retraction. If $p$ has degree $m\geq1$, $H_p$ can be evaluated
using at most $m$ evaluations of $S$ per application, and}
\[
\bigl(\rho^{\mathrm{iAP}}_p\bigr)^{1/m}
<
\sup_{x\in K} \operatorname{spr}\bigl(A^{\mathrm{iAP}}_x\bigr),
\]
{then it has a smaller spectral factor per base-map evaluation than
ordinary iAP, with the cost convention of \Cref{def:acc-stab}.
If the manifolds are $C^{3,\holderexp}$ and $H_p$ is
$C^{2,\holderexp}$, the induced retraction is second-order and has
$O(\|\eta\|^4)$ agreement with the metric-projection retraction
for sufficiently small $\eta\in T_x\M$, uniformly for $x$ on compact
subsets of $\M\cap V$.}
\end{corollary}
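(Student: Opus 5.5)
The corollary is a direct specialization of \Cref{thm:main-polynomial} to the base map $S=\phi_1\circ\phi_2$. I would first verify \Cref{ass:geometry} for $S$ with $K$ and the relative neighborhood $\mathcal O$. The manifold $\M$ is $C^{r+1,\holderexp}$ by the standing hypotheses, and $S$ is $C^{r,\holderexp}$ on a common ambient neighborhood of $\mathcal O$ by assumption. Since $\phi_i(x)=x$ for $x\in\mathcal O$ by \eqref{eq:projection-surrogate-jet}, $S$ fixes $\M$ pointwise near $K$. After shrinking the ambient domain $U$ so that $\M\cap U\subset\mathcal O$, the fixed-point property holds on all of $\M\cap U$. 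The chain rule then gives \eqref{eq:iap-linearization}. By \Cref{lem:exact-linearization}, the blocks are $B_x^{\rm iAP}=0$ and $A_x^{\rm iAP}=A_x^{\rm AP}$, with spectrum in $[0,c_F(x)^2]$ by \Cref{lem:ap-spectrum}.

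Next I would apply \Cref{thm:main-polynomial} to $H_p$, using \Cref{ass:well-defined} and the hypothesis $\rho_p^{\rm iAP}<1$ as \eqref{eq:transformed-normal-stability}. This yields:
\begin{enumerate}[label=\textup{(\alph*)}]
\item an open neighborhood $V$ of $K$ on which the iterates $H_p^k(y)$ are well defined and converge;
\item a $C^r$ limiting map $\Psi_p:V\to\M$;
\item the rate \eqref{eq:polynomial-linear-rate} for every $\widehat q\in(\rho_p^{\rm iAP},1)$ after shrinking $V$;
\item the derivative formula \eqref{eq:polynomial-limiting-derivative} with $C_x=B_x^{\rm iAP}(I-A_x)^{-1}=0$.
\end{enumerate}
The local retraction then follows from the paragraph after \Cref{rem:phase-stable-fibers}: $\Retr^{H_p}_x(\eta)=\Psi_p(x+\eta)$ is jointly $C^r$, fixes the zero section, and has derivative $P_{T_x\M}|_{T_x\M}=I$. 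The per-evaluation claim is simply \Cref{def:acc-stab} with $\rho_S=\sup_K\spr(A_x^{\rm iAP})$, which is $<1$ by \eqref{eq:exact-ap-spectrum} and \eqref{eq:friedrichs-uniform-bound}. The displayed inequality is exactly \eqref{eq:per-evaluation-acceleration}, so nothing further is needed beyond noting that $H_p$ uses at most $m$ evaluations.

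For the second-order statement, take $r=2$, which the $C^{3,\holderexp}$ manifolds and $C^{2,\holderexp}$ realization permit. The key point is that $B_z^{\rm iAP}=0$ holds for every $z$ in a relative neighborhood of $K$, not just at isolated points, because \eqref{eq:projection-surrogate-jet} is assumed throughout $\mathcal O$. Then \eqref{eq:polynomial-coupling-block} gives $B_{p,z}=B_z^{\rm iAP}q_p(A_z)=0$ for all $z\in\M\cap V$, after shrinking $V$ so that $\M\cap V\subset\mathcal O$. \Cref{thm:second-order}\textup{(ii)}, applied with $F=H_p$, then gives the second-order property and the $O(\|\eta\|^4)$ agreement with $P_\M(x+\eta)$, uniformly on compact subsets.

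The main obstacle I anticipate is bookkeeping of neighborhoods. The hypotheses of \Cref{thm:limiting-map} and of \Cref{thm:second-order}\textup{(ii)} are stated for the block family on $\M\cap V$. One must therefore ensure that $V$ can be shrunk so that $\M\cap V\subset\mathcal O$, keeping both the zero-coupling identity and the spectral bound valid there. I would handle this by continuity of $x\mapsto\spr(p(A_x))$, as in the proof of \Cref{thm:main-polynomial}, and by compactness of $K$.
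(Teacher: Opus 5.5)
Your proposal is correct and follows essentially the same route as the paper. Both arguments verify \Cref{ass:geometry} for $S$ via \eqref{eq:iap-linearization}, apply \Cref{thm:main-polynomial} under the transformed stability hypothesis, and take the retraction from \Cref{sec:limiting-map} and the per-evaluation comparison from \eqref{eq:per-evaluation-acceleration}. Both then use $B_{p,x}=B_x^{\rm iAP}q_p(A_x)=0$ on $\M\cap V$, after shrinking $V$ so that $\M\cap V\subset\mathcal O$, to invoke \Cref{thm:second-order}\textup{(ii)}; your explicit neighborhood bookkeeping just spells out what the paper compresses into ``after shrinking $V$.''
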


\begin{proof}
{By \eqref{eq:iap-linearization}, $S$ satisfies
\Cref{ass:geometry} after restricting the common domain. The local
regularity of $H_p$ and transformed stability permit application of
\Cref{thm:main-polynomial}, giving the limit and rate on a neighborhood
where every iterate is defined. The induced retraction follows from
\Cref{sec:limiting-map}, and the comparison per evaluation is
\eqref{eq:per-evaluation-acceleration}. For the higher-order conclusions,
\eqref{eq:polynomial-coupling-block} gives $B_{p,x}=0$ on
$\M\cap V$ after shrinking $V$, so \Cref{thm:second-order} applies.}
\end{proof}

\begin{corollary}[{Chebyshev acceleration of inexact AP}]
\label[corollary]{cor:chebyshev-iap}
{Under the same hypotheses, including
\eqref{eq:projection-surrogate-jet}, assume}
\[
 \bar c_F:=\sup_{x\in K}c_F(T_x\Mone,T_x\Mtwo)\in(0,1).
\]
{For each $m\geq1$, let $H_m$ be an affine-combination or factorized
realization of $p_m^{\bar c_F^2}$ as in
\eqref{eq:orbit-affine-transform} or \eqref{eq:factored-cycle},
satisfying \Cref{ass:well-defined}.
Its transformed normal blocks are uniformly spectrally stable, with exact
uniform spectral factor}
\begin{equation}
\label{eq:iap-chebyshev-factor}
 q_m^{\rm iAP}
 :=\sup_{x\in K}\spr\!\left(p_m^{\bar c_F^2}(A_x^{\rm iAP})\right)
 =C_m(\bar c_F^2),
\end{equation}
{and the factor per evaluation of $S$ satisfies}
\begin{equation}
\label{eq:iap-per-evaluation-factor}
 (q_m^{\rm iAP})^{1/m}
 \leq\frac{\bar c_F^2}{2-\bar c_F^2}
 <\bar c_F^2
 =\sup_{x\in K}\spr(A_x^{\rm iAP}).
\end{equation}
{Thus the convergence and retraction conclusions of
\Cref{cor:polynomial-iap} hold, with every
$\widehat q\in(q_m^{\rm iAP},1)$ admissible in
\eqref{eq:polynomial-linear-rate}. If the manifolds are $C^{3,\holderexp}$ and $H_m$ is $C^{2,\holderexp}$,
the induced retraction is second-order and has fourth-order agreement
with metric projection for tangent perturbations.}
\end{corollary}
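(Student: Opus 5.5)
The plan is to reduce the statement to results already established for exact AP, using \eqref{eq:iap-linearization}. By that identity, at every $x\in\mathcal O$ the iAP map has the same derivative as exact AP. Hence $A_x^{\rm iAP}=A_x^{\rm AP}$ and $B_x^{\rm iAP}=0$. \Cref{lem:ap-spectrum} then gives
\[
\spec(A_x^{\rm iAP})\subset[0,c_F(x)^2],
\qquad
\spr(A_x^{\rm iAP})=c_F(x)^2 .
\]
Taking the supremum over $K$ yields $\sup_{x\in K}\spr(A_x^{\rm iAP})=\bar c_F^2$, which identifies the right-hand side of \eqref{eq:iap-per-evaluation-factor}.

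Next I will compute the exact factor \eqref{eq:iap-chebyshev-factor} exactly as in the proof of \Cref{thm:chebyshev-exact-ap}(i).

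For the upper bound, spectral mapping \eqref{eq:spectral-mapping} combined with the enclosure $[0,\bar c_F^2]$ and \eqref{eq:chebyshev-minimax-value} gives $q_m^{\rm iAP}\le C_m(\bar c_F^2)$.

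For the matching lower bound, I will use continuity of $x\mapsto c_F(x)=\|P_{T_x\Mone}P_{T_x\Mtwo}-P_{T_x\M}\|$ on the compact set $K$. This produces a point $x_*\in K$ with $c_F(x_*)=\bar c_F$. Then $\bar c_F^2$ is an eigenvalue of $A_{x_*}^{\rm iAP}$, and
\[
q_m^{\rm iAP}\ \ge\ \bigl|p_m^{\bar c_F^2}(\bar c_F^2)\bigr| = C_m(\bar c_F^2).
\]
Since $\bar c_F\in(0,1)$, we have $2/\bar c_F^2-1>1$, so $|T_m|>1$ there and $C_m(\bar c_F^2)<1$. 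This is the uniform spectral stability of the transformed normal blocks. The per-evaluation bound \eqref{eq:iap-per-evaluation-factor} then follows directly from \Cref{prop:finite-chebyshev-factor} with $\rho=\bar c_F^2$.

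It remains to invoke \Cref{cor:polynomial-iap} with $p=p_m^{\bar c_F^2}$ and $H_p=H_m$.
\begin{itemize}[leftmargin=*]
\item Both realizations are first-order realizations by \Cref{lem:poly-linearization}, and they satisfy \Cref{ass:well-defined} by hypothesis.
\item Each uses exactly $m$ evaluations of $S$ per application.
\item Transformed stability holds with $\rho_p^{\rm iAP}=C_m(\bar c_F^2)<1$.
\end{itemize}
The corollary then delivers four conclusions: local convergence, the $C^r$ limiting map, admissibility of every $\widehat q\in(q_m^{\rm iAP},1)$, and the induced retraction. The strict inequality in \eqref{eq:iap-per-evaluation-factor} is exactly the per-evaluation improvement hypothesis of \Cref{cor:polynomial-iap}.

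The higher-order claim under $C^{3,\holderexp}$ manifolds and $C^{2,\holderexp}$ realizations rests on the coupling block. We have $B_x^{\rm iAP}=0$ throughout $\mathcal O$, so \eqref{eq:polynomial-coupling-block} gives $B_{p,x}=0$ on a relative neighborhood of $K$. \Cref{thm:second-order}(ii) then yields both the second-order property and the $O(\|\eta\|^4)$ agreement with metric projection.

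The only step needing care is the lower bound, that is, attainment of $\bar c_F$ on $K$. This requires continuity of $c_F$, which relies on cleanness on a neighborhood of $K$ so that the tangent projectors, including $P_{T_x\M}$, vary continuously. Once continuity is in hand, compactness finishes the argument exactly as in the exact-AP case. The point to stress is that nothing in the argument depends on the surrogates being $\M_i$-valued, because only the first-order data \eqref{eq:projection-surrogate-jet} enter.
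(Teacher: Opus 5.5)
Your proposal is correct and follows essentially the same route as the paper: it identifies the iAP normal blocks with those of exact AP via \eqref{eq:iap-linearization}, reuses the attained-endpoint argument from \Cref{thm:chebyshev-exact-ap} together with \eqref{eq:finite-chebyshev-factor}, and then invokes \Cref{cor:polynomial-iap}. Your separate treatment of the higher-order claim, via $B_{p,x}=0$ from \eqref{eq:polynomial-coupling-block} and \Cref{thm:second-order}\textup{(ii)}, simply spells out what the proof of that corollary already does.
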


\begin{proof}
{Equation \eqref{eq:iap-linearization} identifies the normal blocks
with those of exact AP. Their spectra therefore lie in
$[0,\bar c_F^2]$ by \eqref{eq:exact-ap-spectrum}, and the upper endpoint
is attained by the compact family. The endpoint argument of
\Cref{thm:chebyshev-exact-ap} gives
\eqref{eq:iap-chebyshev-factor}; \eqref{eq:finite-chebyshev-factor}
gives \eqref{eq:iap-per-evaluation-factor}. These two realizations use
$m$ evaluations of $S$ per application. Their transformed stability and
local well-definedness now allow \Cref{cor:polynomial-iap} to apply.}
\end{proof}

{Condition \eqref{eq:projection-surrogate-jet} is sufficient but not
necessary for vanishing coupling.
If each component map fixes $\M_i$ and its derivative preserves
$N_x\M_i$ as in \eqref{eq:iap-normal-space-invariance},
\Cref{prop:iap-zero-coupling} in \Cref{app:iap-zero-coupling} gives $B_x^{\rm iAP}=0$.
This condition does not identify the normal block with that of exact AP,
so the Friedrichs-angle Chebyshev design need not transfer. Once suitable
spectral information is available, \Cref{thm:main-polynomial} still
applies under its stability and regularity hypotheses. If genuine
tangential first-order error changes the coupling block, the
second-order criterion remains \eqref{eq:second-order-criterion}.}

\subsection{Relaxed and generalized alternating projections}\label{sec:gap}

{Generalized alternating projections (GAP) introduce
relaxation parameters at two levels to improve AP's convergence while
retaining one projection onto each set per iteration. Under the standing
geometric and local-projection hypotheses, let
$\alpha_1,\alpha_2,\beta\in\R$. The individual relaxed projections and
their composition are}
\begin{equation}
\label{eq:gap-inner-map}
R_{\alpha_i}^{(i)}(y):=(1-\alpha_i)y+\alpha_iP_{\M_i}(y),
\quad i=1,2,
\qquad
S_{\alpha_1,\alpha_2}
:=R_{\alpha_1}^{(1)}\circ R_{\alpha_2}^{(2)}.
\end{equation}
{We call $S_{\alpha_1,\alpha_2}$ the two-parameter
relaxed AP map: each metric projection is relaxed separately. It is
the inner map of GAP, with $\alpha_1$ and $\alpha_2$ as the inner
relaxation parameters. The outer parameter $\beta$ relaxes the whole
composition, giving the three-parameter GAP map}
\begin{equation}
\label{eq:gap-outer-map}
p_\beta(\lambda):=1-\beta+\beta\lambda,
\qquad
G_{\alpha_1,\alpha_2,\beta}
:=(1-\beta)\Id+\beta S_{\alpha_1,\alpha_2}.
\end{equation}
{Thus $\beta=1$ recovers the inner map, and
$\alpha_1=\alpha_2=\beta=1$ recovers ordinary AP. Setting
$\alpha_1=\alpha_2=1$ gives
$G_{1,1,\beta}=(1-\beta)\Id+\beta S_{\rm AP}$, the degree-one
relaxation of the full AP map described in \eqref{eq:factored-cycle}.
This differs from relaxing the two projections separately in
\eqref{eq:gap-inner-map}. The superscript $\mathrm{core}$ below
refers to the derivative blocks of the inner map.
Related relaxed projection methods are studied in}
\cite{CegielskiSuchocka2008}.

{For two linear subspaces with $0<\theta_F<\pi/2$,
angle-based GAP parameters attain the normal spectral factor
$(1-\sin\theta_F)/(1+\sin\theta_F)<\cos^2\theta_F$ at the same
projection cost as one AP step. The corresponding local convergence
analysis extends to smooth manifolds}
\cite{FaltGiselsson2017,FaltGiselsson2023}.
{Once the normal-block conditions are verified, the
convergence and retraction properties of the inner and full GAP maps
follow from \Cref{thm:limiting-map,thm:main-polynomial,thm:second-order}
under the stated smoothness assumptions. For conciseness, the spectral
verification and these FPMA applications are collected in
\Cref{app:gap-analysis}. We give below the derivative blocks and an
explicit Chebyshev acceleration result for the inner map with
$0<\alpha_1,\alpha_2\leq1$.}

\begin{proposition}[Fixed-point-manifold linearization of the two-parameter relaxed AP map]
\label[proposition]{prop:gap-linearization}
{For $x\in\mathcal O$, the map \eqref{eq:gap-inner-map} satisfies}
\begin{equation}
\label{eq:gap-map-derivative}
S_{\alpha_1,\alpha_2}(x)=x,
\qquad
DS_{\alpha_1,\alpha_2}(x)=L_{1,x}L_{2,x},
\qquad
L_{i,x}:=(1-\alpha_i)I+\alpha_iP_{T_x\M_i}.
\end{equation}
Both $T_x\M$ and $N_x\M$ are invariant. The restriction to $T_x\M$ is the
identity, the normal-to-tangential block satisfies $B_x^{\rm core}=0$, and
\begin{equation}
\label{eq:gap-core-linearization}
DS_{\alpha_1,\alpha_2}(x)
=P_{T_x\M}+A_x^{\rm core}P_{N_x\M},
\qquad
A_x^{\rm core}:=
L_{1,x}L_{2,x}|_{N_x\M}.
\end{equation}
\end{proposition}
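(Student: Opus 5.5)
The plan has three parts: first the fixed-point and chain-rule computation, then the invariance of the tangent and normal spaces, then the block form. The argument mirrors \Cref{lem:exact-linearization}.

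\emph{Fixed point and derivative.} For $x\in\mathcal O$ we have $x\in\M_1\cap\M_2$, so $P_{\M_i}(x)=x$. Hence $R^{(i)}_{\alpha_i}(x)=(1-\alpha_i)x+\alpha_ix=x$ for $i=1,2$, and therefore $S_{\alpha_1,\alpha_2}(x)=R^{(1)}_{\alpha_1}(x)=x$. The local projections are $C^{r,\holderexp}$ near $\mathcal O$ with $DP_{\M_i}(x)=P_{T_x\M_i}$, so $DR^{(i)}_{\alpha_i}(x)=L_{i,x}$. Because $R^{(2)}_{\alpha_2}(x)=x$, the chain rule evaluates $DR^{(1)}_{\alpha_1}$ at $x$ itself, which gives $DS_{\alpha_1,\alpha_2}(x)=L_{1,x}L_{2,x}$. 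This proves \eqref{eq:gap-map-derivative}.

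\emph{Invariance.} By clean intersection \eqref{eq:clean-intersection}, $T_x\M\subset T_x\M_i$. So $P_{T_x\M_i}$ fixes $T_x\M$, and hence $L_{i,x}$ restricts to the identity on $T_x\M$. Each $L_{i,x}$ is self-adjoint, being an affine combination of $I$ and an orthogonal projector with real coefficients. For $n\in N_x\M$ and $v\in T_x\M$ this gives
\[
\langle L_{1,x}L_{2,x}n,v\rangle=\langle n,L_{2,x}L_{1,x}v\rangle=\langle n,v\rangle=0 .
\]
Thus $N_x\M$ is invariant, and the tangent space is invariant because it is fixed pointwise. Alternatively, one can note that each $L_{i,x}$ separately preserves $N_x\M=(T_x\M)^\perp$, since a self-adjoint operator preserving a subspace also preserves its orthogonal complement.

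\emph{Block form.} Invariance of $N_x\M$ means $P_{T_x\M}DS_{\alpha_1,\alpha_2}(x)|_{N_x\M}=0$, which is $B^{\rm core}_x=0$. In the decomposition \eqref{eq:tangent-normal-block}, the tangent block is $I$ and the normal block is $A^{\rm core}_x=L_{1,x}L_{2,x}|_{N_x\M}$. Writing $\eta=P_{T_x\M}\eta+P_{N_x\M}\eta$ then gives \eqref{eq:gap-core-linearization}.

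There is no real obstacle. The only point needing care is the chain-rule evaluation point, which is resolved by the fixed-point property, together with verifying that the local projection domains contain $\mathcal O$, which is assumed in the standing hypotheses.
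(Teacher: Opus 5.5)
Your proposal is correct and takes essentially the paper's approach: the chain rule at the fixed point, then self-adjointness of the projectors (equivalently of $L_{i,x}$) together with the identity action on $T_x\M$ to show that $N_x\M$ is invariant. The paper uses your ``alternative'' version, showing that each factor $L_{i,x}$ separately preserves $N_x\M$, while your main line applies the same adjoint argument to the composite, exactly as in \Cref{lem:exact-linearization}. Both are equally valid.
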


\begin{proof}
{The chain rule gives \eqref{eq:gap-map-derivative}, and
$L_{i,x}v=v$ for $v\in T_x\M$. For $n\in N_x\M$, self-adjointness
of $P_{T_x\M_i}$ gives}
\[
{\langle P_{T_x\M_i}n,v\rangle
=\langle n,P_{T_x\M_i}v\rangle=\langle n,v\rangle=0,
\qquad v\in T_x\M.}
\]
{Each $L_{i,x}$ therefore preserves $N_x\M$, proving
\eqref{eq:gap-core-linearization} and $B_x^{\rm core}=0$.}
\end{proof}

{The vanishing coupling in \Cref{prop:gap-linearization}
leaves normal spectral stability as the remaining condition for
\Cref{thm:limiting-map}. The principal-angle analysis in
\Cref{app:gap-analysis} computes the normal spectra and shows that
$0<\alpha_1,\alpha_2<2$ ensures
$\rho_{\rm core}:=\sup_{x\in K}\spr(A_x^{\rm core})<1$
(\Cref{thm:gap-inner-stability}; \eqref{eq:gap-core-stability}).
Thus the inner iterates converge locally to a $C^r$ limiting map
$\Psi_{\rm core}$, with every $\widehat q\in(\rho_{\rm core},1)$
an admissible local R-linear factor. The limiting map induces a local
retraction. When $r=2$, this retraction is second-order and
$\Psi_{\rm core}$ has fourth-order agreement with metric projection for
tangent perturbations, uniformly on compact subsets of its domain in
$\M$ (\Cref{cor:gap-retraction}).}

{For the full GAP map \eqref{eq:gap-outer-map}, the corresponding
normal block is $p_\beta(A_x^{\rm core})=(1-\beta)I+\beta A_x^{\rm core}$.
For fixed inner parameters $0<\alpha_1,\alpha_2<2$ and positive outer
parameters $\beta>0$, \Cref{thm:gap-outer-stability} shows that this family is uniformly
spectrally stable exactly for $0<\beta<\beta_{\max}$, where
\eqref{eq:gap-outer-threshold} defines the threshold from the normal
spectra over $K$. Since $\beta_{\max}>1$, this range includes every
$0<\beta\leq1$. For every $\beta\in(0,\beta_{\max})$, set}
\[
{
\rho_{\rm out}:=\sup_{x\in K}
\spr\!\bigl((1-\beta)I+\beta A_x^{\rm core}\bigr)<1.
}
\]
{\Cref{cor:outer-gap-retraction} gives the same local convergence,
regularity, and retraction conclusions for the GAP iterates, with
admissible R-linear factors $\widehat q\in(\rho_{\rm out},1)$.
The second-order and fourth-order tangent-agreement properties again
hold when $r=2$.}

{The outer relaxation is a degree-one polynomial
realization of the inner map, with $p_\beta(1)=1$.
We now consider first-order realizations $H_p$ of arbitrary real
consistent polynomials for the same inner map. By
\eqref{eq:polynomial-linearization}, their normal blocks are
$p(A_x^{\rm core})$ and their coupling blocks remain zero.
For the following corollary, the inner parameters may again be arbitrary
real numbers: only the transformed blocks must be uniformly spectrally
stable. Thus the condition concerns $p(A_x^{\rm core})$, and can hold
even when $\rho_{\rm core}\geq1$.}

\begin{corollary}[Polynomially transformed two-parameter relaxed AP map]
\label[corollary]{cor:polynomial-gap}
{Let $H_p$ be a first-order realization of a real consistent polynomial
$p$ for $S_{\alpha_1,\alpha_2}$ satisfying \Cref{ass:well-defined}.
If}
\[
\rho_p^{\rm core}:=
\sup_{x\in K}\spr(p(A_x^{\rm core}))<1,
\]
{then, for $y$ in a sufficiently small open neighborhood $V$ of $K$,
the sequence $\{H_p^k(y)\}_{k\geq0}$ is well defined and converges.
Its limit defines a $C^r$ map $\Psi_p:V\to\M$. Every
$\widehat q\in(\rho_p^{\rm core},1)$ is an admissible local R-linear
factor in \eqref{eq:polynomial-linear-rate}, and the limiting map
induces a local retraction. If $r=2$, this retraction is second-order,
and its limiting map has $O(\|\eta\|^4)$ agreement with $P_\M$
for sufficiently small $\eta\in T_x\M$, uniformly for $x$ on compact
subsets of $\M\cap V$.}
\end{corollary}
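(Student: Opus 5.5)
The plan is to reduce the statement to \Cref{thm:main-polynomial} applied with base map $S=S_{\alpha_1,\alpha_2}$. We then read off the coupling block from \Cref{lem:poly-linearization} and \Cref{prop:gap-linearization}, and finish with \Cref{thm:second-order}.

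\textbf{Step 1: check \Cref{ass:geometry} for $S_{\alpha_1,\alpha_2}$.}
Under the standing hypotheses, $P_{\M_1}$ and $P_{\M_2}$ are single-valued and $C^{r,\holderexp}$ on a common ambient neighborhood of $\mathcal O$. Hence each relaxed map $R^{(i)}_{\alpha_i}$ in \eqref{eq:gap-inner-map} is $C^{r,\holderexp}$ there. Each $R^{(i)}_{\alpha_i}$ fixes $\M$, so after shrinking the domain (using continuity at the fixed set) the composition $S_{\alpha_1,\alpha_2}$ is well defined and $C^{r,\holderexp}$ on an open neighborhood $U$ of $K$. \Cref{prop:gap-linearization} gives $S_{\alpha_1,\alpha_2}(x)=x$ on $\mathcal O$. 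No sign restriction on $\alpha_1,\alpha_2$ is needed for any of this, so \Cref{ass:geometry} holds for arbitrary real inner parameters.

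\textbf{Step 2: convergence, limiting map, and rate.}
Since $H_p$ is a first-order realization satisfying \Cref{ass:well-defined}, \Cref{lem:poly-linearization} shows that its normal blocks are $p(A_x^{\rm core})$. The hypothesis $\rho_p^{\rm core}<1$ is exactly \eqref{eq:transformed-normal-stability}. \Cref{thm:main-polynomial} then yields the following:
\begin{itemize}[leftmargin=*]
\item a neighborhood $V$ on which the iterates $H_p^k(y)$ are defined and converge;
\item a $C^r$ limiting map $\Psi_p:V\to\M$;
\item the rate \eqref{eq:polynomial-linear-rate} for every $\widehat q\in(\rho_p^{\rm core},1)$.
\end{itemize}
The local retraction property follows from the discussion after \Cref{rem:phase-stable-fibers}. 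That discussion gives joint $C^r$ regularity of $\Retr^{H_p}$ near the zero section, and \eqref{eq:retraction-conditions} follows from \eqref{eq:polynomial-limiting-derivative}.

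\textbf{Step 3: zero coupling and the $r=2$ conclusions.}
\Cref{prop:gap-linearization} gives $B_x^{\rm core}=0$ for every $x\in\mathcal O$. By \eqref{eq:polynomial-coupling-block}, $B_{p,x}=B_x^{\rm core}q_p(A_x^{\rm core})=0$ for every $x\in\M\cap V$, after shrinking $V$ so that $\M\cap V\subset\mathcal O$. For $r=2$, \Cref{thm:second-order}(i) applied to $F=H_p$ then gives the second-order property. \Cref{thm:second-order}(ii) gives the $O(\|\eta\|^4)$ agreement with $P_\M$, uniformly on compact subsets of $\M\cap V$, because its hypothesis $B_z=0$ holds throughout $\M\cap V$.

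\textbf{Main obstacle.}
The only delicate point is Step 1 when $\alpha_i\notin(0,1]$. In that case $R^{(i)}_{\alpha_i}(y)$ need not lie near $\M_i$, so it must be checked that the intermediate point $R^{(2)}_{\alpha_2}(y)$ stays in the domain of $P_{\M_1}$. I would handle this by continuity: $R^{(2)}_{\alpha_2}$ is continuous and fixes $K$, so its preimage of the common projection neighborhood contains a neighborhood of $K$. The same argument, applied to the finitely many intermediate stages of $H_p$, is already covered by \Cref{ass:well-defined}.
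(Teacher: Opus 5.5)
Your proposal is correct and follows essentially the same route as the paper. The paper's proof takes the fixed-point structure and $B_x^{\rm core}=0$ from \eqref{eq:gap-core-linearization}, reads off the normal block $p(A_x^{\rm core})$ and zero coupling $B_x^{\rm core}q_p(A_x^{\rm core})=0$ from \eqref{eq:polynomial-linearization}, applies \Cref{thm:main-polynomial}, and after shrinking $V$ invokes \Cref{thm:second-order} for the $r=2$ conclusions; your explicit check that $S_{\alpha_1,\alpha_2}$ is well defined near $K$ for arbitrary real $\alpha_i$ spells out what the paper leaves implicit in its standing hypotheses.
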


\begin{proof}
{Equation \eqref{eq:gap-core-linearization} gives the
fixed-point-manifold structure, and \eqref{eq:polynomial-linearization}
gives normal block $p(A_x^{\rm core})$ and zero coupling for $H_p$.
The transformed stability condition permits \Cref{thm:main-polynomial}
to apply. The retraction follows from \Cref{sec:limiting-map}; when
$r=2$, the vanishing coupling on $\M\cap V$ after shrinking $V$
also gives both higher-order conclusions by \Cref{thm:second-order}.}
\end{proof}

{An explicit Chebyshev design requires a real spectral
interval. For the following theorem, fix
$0<\alpha_1,\alpha_2\leq1$ and assume
$\bar c_F:=\sup_{x\in K}c_F(T_x\Mone,T_x\Mtwo)\in(0,1)$.
Write $\delta:=(1-\alpha_1)(1-\alpha_2)$ and
$\bar\tau:=2-\alpha_1-\alpha_2+\alpha_1\alpha_2\bar c_F^2$.
The theorem establishes the following closed form for the uniform normal
spectral radius defined above:}
\begin{equation}
\label{eq:underrelaxed-gap-rho}
 \rho_{\rm core}
 =\frac{\bar\tau+
 \sqrt{\bar\tau^2-4\delta}}{2}.
\end{equation}
{For every $m\geq1$, let $H_m$ be an affine-combination or
factorized realization of $p_m^{\rho_{\rm core}}$ as in
\eqref{eq:orbit-affine-transform} or \eqref{eq:factored-cycle},
satisfying \Cref{ass:well-defined}.}

\begin{theorem}[{Chebyshev acceleration of two-parameter relaxed AP}]
\label{thm:chebyshev-gap}
{With this setup, $0<\rho_{\rm core}<1$ and}
\begin{equation}
\label{eq:underrelaxed-gap-spectrum}
 \spec(A_x^{\rm core})\subset[0,\rho_{\rm core}]{\quad(x\in K)},
 \qquad
 \sup_{x\in K}\spr(A_x^{\rm core})=\rho_{\rm core}.
\end{equation}
{The exact uniform spectral factor of $H_m$ is}
\begin{equation}
\label{eq:chebyshev-gap-factor}
 q_m^{\rm core}:=
 \sup_{x\in K}\spr\!\bigl(p_m^{\rho_{\rm core}}(A_x^{\rm core})\bigr)
 =C_m(\rho_{\rm core}),
\end{equation}
{and its factor per evaluation of the inner map satisfies}
\begin{equation}
\label{eq:chebyshev-gap-per-evaluation-factor}
 (q_m^{\rm core})^{1/m}
 \leq\frac{\rho_{\rm core}}{2-\rho_{\rm core}}
 <\rho_{\rm core}.
\end{equation}
{For each fixed $m$, the convergence and local retraction
conclusions of \Cref{cor:polynomial-gap} hold with
$\rho_p^{\rm core}=q_m^{\rm core}$; in particular, the iterates converge
locally to a $C^r$ limiting map and every
$\widehat q\in(q_m^{\rm core},1)$ is an admissible R-linear factor.
If the manifolds are $C^{3,\holderexp}$ and $H_m$ is $C^{2,\holderexp}$,
the second-order and fourth-order tangent-agreement conclusions also
hold, with the local and compact-uniform scope stated in that corollary.}
\end{theorem}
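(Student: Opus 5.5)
The plan is to compute $\spec(A_x^{\rm core})$ explicitly from the principal-angle structure of $U=T_x\Mone$ and $V=T_x\Mtwo$, locate its largest element, and then reuse the endpoint argument of \Cref{thm:chebyshev-exact-ap}.

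\textbf{Step 1: invariant decomposition.} Set $W=U\cap V=T_x\M$ (by cleanness), so $N_x\M=W^\perp$. Decompose $W^\perp$ orthogonally into pieces that are invariant under both $P_U$ and $P_V$, hence under $L_{1,x}$ and $L_{2,x}$:
\begin{itemize}
\item the two-dimensional blocks $\operatorname{span}\{u_j,v_j\}$ spanned by principal vectors with angles $0<\theta_j<\pi/2$;
\item $U\cap V^\perp$;
\item $V\cap U^\perp$;
\item $U^\perp\cap V^\perp$.
\end{itemize}
This is the standard canonical form of two subspaces \cite{BauschkeBelloCruzNghiaPhanWang2016}. On the one-dimensional pieces the eigenvalues of $L_{1,x}L_{2,x}$ are read off directly:
\begin{itemize}
\item on $U\cap V^\perp$, $L_{2,x}$ acts as $1-\alpha_2$ and $L_{1,x}$ as $I$, giving $1-\alpha_2$;
\item on $V\cap U^\perp$ the eigenvalue is $1-\alpha_1$;
\item on $U^\perp\cap V^\perp$ the eigenvalue is $\delta$.
\end{itemize}

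\textbf{Step 2: the two-dimensional blocks.} On a block with $c=\cos\theta_j$, each $P_U,P_V$ is a rank-one projector, so $\det L_{i,x}=1-\alpha_i$. Hence the determinant of $L_{1,x}L_{2,x}$ on the block is $\delta$. A direct trace computation in the basis $u_j$, $u_j^\perp$ should give
\[
\tau(c)=2-\alpha_1-\alpha_2+\alpha_1\alpha_2c^2 .
\]
The block eigenvalues are therefore the roots of $\lambda^2-\tau(c)\lambda+\delta$. For $0<\alpha_i\le1$, the AM--GM inequality gives $\tau(c)\ge(1-\alpha_1)+(1-\alpha_2)\ge2\sqrt\delta$. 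So both roots are real and lie in $[0,\infty)$, and the larger root $\lambda_+(c)$ is increasing in $\tau$, hence in $c^2$.

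\textbf{Step 3: the spectral interval and $\rho_{\rm core}$.} At $c=0$ the roots are exactly $1-\alpha_1$ and $1-\alpha_2$, and $\delta\le\min_i(1-\alpha_i)$. Consequently every eigenvalue from Step 1 and Step 2 lies in $[0,\lambda_+(c_F(x))]\subset[0,\lambda_+(\bar c_F)]$, and $\lambda_+(\bar c_F)$ equals the right-hand side of \eqref{eq:underrelaxed-gap-rho}.

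To show $\rho_{\rm core}<1$, evaluate the characteristic polynomial at $\lambda=1$:
\[
1-\bar\tau+\delta=\alpha_1\alpha_2(1-\bar c_F^2)>0 .
\]
The vertex satisfies $\bar\tau/2<1$, because $\alpha_1+\alpha_2>\alpha_1\alpha_2\bar c_F^2$. A positive value at $1$ together with a vertex left of $1$ places both roots below $1$. Also $\rho_{\rm core}\ge\bar\tau/2>0$.

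Attainment follows as in the proof of \Cref{thm:chebyshev-exact-ap}. By continuity of $c_F$ and compactness of $K$ there is $x_*$ with $c_F(x_*)=\bar c_F>0$. A principal block with $\theta_1=\theta_F$ then exists at $x_*$, so $\rho_{\rm core}\in\spec(A_{x_*}^{\rm core})$. This proves \eqref{eq:underrelaxed-gap-spectrum}.

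\textbf{Step 4: Chebyshev factor and conclusions.} By \eqref{eq:spectral-mapping} and \eqref{eq:chebyshev-minimax-value}, $q_m^{\rm core}\le C_m(\rho_{\rm core})$. The attained endpoint gives the reverse inequality, since $|p_m^{\rho_{\rm core}}(\rho_{\rm core})|=C_m(\rho_{\rm core})$, and this yields \eqref{eq:chebyshev-gap-factor}. \Cref{prop:finite-chebyshev-factor} then gives \eqref{eq:chebyshev-gap-per-evaluation-factor}. Both realizations are first-order realizations by \Cref{lem:poly-linearization}, and they use $m$ inner-map evaluations. \Cref{cor:polynomial-gap} then supplies the convergence, limiting-map, retraction, and (for $r=2$) second- and fourth-order conclusions.

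The main obstacle is the trace computation in Step 2 and the completeness of the invariant decomposition in Step 1. Because $A_x^{\rm core}$ is not self-adjoint, only eigenvalues are available here, not norms. I would carry out the trace calculation concretely with $P_U=e_1e_1^T$ and $P_V=ww^T$, where $w=(c,s)$.
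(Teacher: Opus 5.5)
Your proposal is correct and follows essentially the same route as the paper. Both arguments use real nonnegative block roots from $\tau(c)\ge 2\sqrt{\delta}$, monotonicity of $\lambda_+$ in $c^2$, domination of the scalar eigenvalues by $\lambda_+(0)=\max\{1-\alpha_1,1-\alpha_2\}$, endpoint attainment by compactness, and then the Chebyshev endpoint argument together with \Cref{cor:polynomial-gap}. The only differences are minor: you derive the principal-block characteristic polynomial yourself via determinant and trace rather than citing \Cref{prop:gap-block}, and you obtain $\rho_{\rm core}<1$ from the value $\alpha_1\alpha_2(1-\bar c_F^2)>0$ at $\lambda=1$ plus the vertex location, instead of invoking the Jury--Schur argument of \Cref{thm:gap-inner-stability}.
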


\begin{proof}
{First identify the spectral interval and its attained
upper endpoint. Put $u_i=1-\alpha_i\in[0,1)$.
By \eqref{eq:gap-characteristic-polynomial}, each principal block has
characteristic polynomial $z^2-\tau(c)z+\delta$, where
$c$ is the corresponding principal-angle cosine and}
\[
{
\tau(c)=u_1+u_2+\alpha_1\alpha_2c^2\geq2\sqrt{\delta},
\qquad \delta=u_1u_2.
}
\]
{Both roots are real and nonnegative. The larger root
$\lambda_+(c)$ is nondecreasing in $c^2$, and
$\lambda_+(0)=\max\{u_1,u_2\}\geq\delta$.
It therefore dominates the remaining scalar eigenvalues in
\eqref{eq:gap-transverse-eigenvalues} and
\eqref{eq:gap-complement-eigenvalue}.
Compactness and $\bar c_F>0$ give a principal block with
$c=\bar c_F$, proving \eqref{eq:underrelaxed-gap-rho} and
\eqref{eq:underrelaxed-gap-spectrum}, including attainment of the
upper endpoint. This endpoint is positive, and is below one by
\Cref{thm:gap-inner-stability}.}

{Next, spectral mapping and the Chebyshev interval bound,
together with the attained endpoint, give}
\[
{
\begin{aligned}
q_m^{\rm core}
&\leq \max_{\lambda\in[0,\rho_{\rm core}]}
 |p_m^{\rho_{\rm core}}(\lambda)|=C_m(\rho_{\rm core}),\\
q_m^{\rm core}
&\geq |p_m^{\rho_{\rm core}}(\rho_{\rm core})|
 =C_m(\rho_{\rm core}).
\end{aligned}
}
\]
{This proves \eqref{eq:chebyshev-gap-factor}.
Each realization uses $m$ inner-map evaluations, so
\eqref{eq:finite-chebyshev-factor} yields the strict comparison
\eqref{eq:chebyshev-gap-per-evaluation-factor}.
In particular, $q_m^{\rm core}<1$.}

{Finally, the local well-definedness and regularity of
$H_m$, together with these stable normal blocks and the zero coupling
in \eqref{eq:polynomial-coupling-block}, permit application of
\Cref{cor:polynomial-gap}. This gives local convergence, the rate, and
the induced retraction. Under the additional $C^{3,\holderexp}$ and
$C^{2,\holderexp}$ assumptions, apply that corollary with $r=2$ to
obtain second-order retractions and fourth-order agreement for small
tangent perturbations, uniformly on compact subsets.}
\end{proof}

{The strict improvement in
\eqref{eq:chebyshev-gap-per-evaluation-factor} is relative to the chosen
inner map $S_{\alpha_1,\alpha_2}$, equivalently GAP with $\beta=1$.
The choice $\alpha_1=\alpha_2=1$ recovers the exact AP result.
For other inner parameters, the comparison does not guarantee an
improvement over ordinary AP, nor does it establish improvement over
optimally tuned three-parameter GAP. For general inner parameters the normal spectra
may contain negative or complex eigenvalues, and outer relaxation
changes them through $p_\beta$ in \eqref{eq:gap-outer-map}.
\Cref{thm:main-polynomial} still gives local convergence when the
polynomially transformed normal blocks are uniformly spectrally stable.
For a degree-$m$ polynomial realized using at most $m$ evaluations of a
stable full GAP map, strict acceleration additionally requires the
comparison in \eqref{eq:per-evaluation-acceleration}.
Designing such polynomials for general inner and outer parameters,
and comparing them with optimally tuned GAP, is left for future work.}

\section{Numerical experiments}
\label{sec:numerics}

{We assess Chebyshev acceleration through three questions.
Does the response to a small normal perturbation agree with the predicted
spectral factor? Do the accelerated limiting maps preserve the retraction
properties of AP? Can these methods reduce the computational cost of a
nonlinear feasibility problem? We answer the first two questions using
controlled manifold intersections with known geometry, and the third
using finite unit-norm tight frame construction and completion.}

{We count one application of the base AP map, comprising one
projection onto each set, as one base AP evaluation. A degree-$m$
Chebyshev cycle costs $m$ such evaluations, so comparisons use evaluation
counts rather than cycle counts. In the frame tests, each evaluation
requires one thin singular value decomposition (SVD); SVD count is
therefore the primary cost measure, with elapsed time as a secondary
measure. Common warm-up steps and rejected proposals are included.
Additional protocol and implementation details are collected in
\Cref{app:numerics}.}

\subsection{{Spectral factors and Friedrichs-angle scaling}}
\label{subsec:numerics-rates}

{We first test the spectral prediction of
\Cref{thm:chebyshev-exact-ap} and compare the work required as the
intersection angle decreases. For a prescribed Friedrichs cosine
$c_F\in(0,1)$, we use a sphere and a hyperplane,}
\begin{equation}
 \Mtwo=\{y\in\R^d:\|y\|=1\},
 \qquad
 \Mone=\{y\in\R^d:a^Ty=c_F\},
 \qquad \|a\|=1.
 \label{eq:numerical-sphere-affine}
\end{equation}
{Their intersection is a $(d-2)$-sphere, and the normal
derivative block of AP has spectral radius $\rho=c_F^2$.
For a tested map $H$, its normal spectral factor means
$\spr(DH(x)|_{N_x\M})$ at a fixed point $x$.
In this example it equals $\rho$ for AP and $C_m(\rho)$ for either
degree-$m$ Chebyshev realization, by
\eqref{eq:exact-ap-chebyshev-factor}.
By \Cref{thm:limiting-map}\textup{(iii)}, every
$\widehat q\in(\rho,1)$ is an admissible local R-linear convergence factor
for AP. For each fixed $m$, \Cref{thm:chebyshev-exact-ap}\textup{(ii)}
gives the same conclusion for $\widehat q\in(C_m(\rho),1)$, with one
iteration corresponding to a complete Chebyshev cycle.
The corresponding spectral factors per base AP evaluation are $\rho$
and $C_m(\rho)^{1/m}$.}

{To measure this prediction without forming the derivative,
choose a unit eigenvector $v$ of the AP normal block associated with
$\rho$, perturb $x$ to $x+hv$, and apply $H$ once. We record}
\begin{equation}
\label{eq:factor-finite-difference}
 \|P_{N_x\M}(H(x+hv)-x)\|/h.
\end{equation}
{This quantity is the size of the resulting normal displacement
divided by the input displacement. Since $H(x)=x$, this finite-difference
quantity tends to $\|P_{N_x\M}DH(x)[v]\|$ as $h\downarrow0$.
For AP and both polynomial realizations, the chosen direction attains
the normal spectral factor. Thus the measurement can be compared with
$\rho$ or $C_m(\rho)$; it is a directional derivative estimate rather
than a ratio of successive nonlinear iteration errors.}

{We set $d=100$, $h=10^{-7}$,
$c_F\in\{0.5,0.8,0.95,0.99,0.999\}$, and $m\in\{1,2,4,8\}$.
Both the affine-combination construction \eqref{eq:orbit-affine-transform}
and the factorized construction \eqref{eq:factored-cycle} are tested,
the latter with Leja-ordered roots, over 20 random coordinate
realizations. Figure~\ref{fig:rate-validation}(a) displays the AP and
factorized results. Across both Chebyshev constructions, the largest
relative discrepancy for theoretical factors at least $10^{-8}$ is
$3.66\times10^{-3}$; for the factorized construction alone it is
$1.30\times10^{-4}$. The threshold excludes factors too small for a
meaningful relative-error comparison at the chosen finite-difference
step. These measurements support the predicted spectral factors and the
shared first-order model of the two realizations.}

{We next compare AP with factorized Chebyshev cycles when
the Friedrichs angle is small. Set $d=50$, $c_F=\cos\theta_F$,
$\theta_F\in\{0.4,0.2,0.1,0.05,0.025\}$, and
$m=\lceil1/\theta_F\rceil$. Both methods start from $x+10^{-8}v$
and reduce the distance to the known intersection by a factor $10^{-4}$,
subject to an absolute tolerance floor of $10^{-13}$. All runs over 20 coordinate
realizations at each angle reach the target. As the angle decreases from $0.4$ to
$0.025$, the median base AP evaluation counts rise from $57$ to $14733$
for AP and from $18$ to $280$ for Chebyshev acceleration.
The log--log fitted slopes are $-2.004$ and $-0.992$, respectively,
close to the predicted values $-2$ and $-1$ in
\Cref{rem:ap-square-root}. Hence this test shows a slower
increase in work for the selected Chebyshev degrees as the angle
shrinks. It probes the large-degree spectral prediction with
$m\sim1/\theta_F$ at finitely many angles.}

\begin{figure}[!htbp]
\centering
\begin{minipage}[t]{0.49\textwidth}
  \centering
  \includegraphics[width=\linewidth]{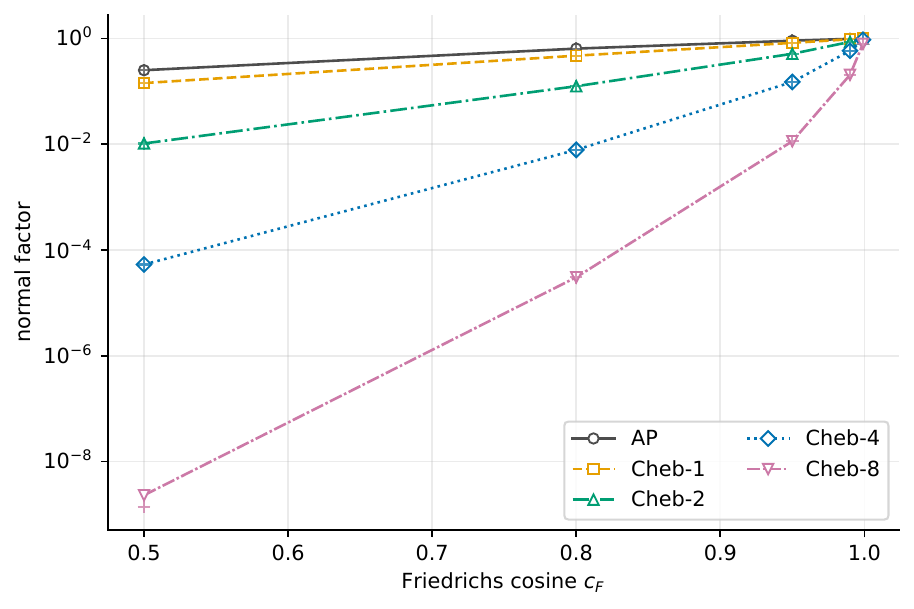}
  \small {(a) Estimated and exact normal spectral factors.}
\end{minipage}\hfill
\begin{minipage}[t]{0.49\textwidth}
  \centering
  \includegraphics[width=\linewidth]{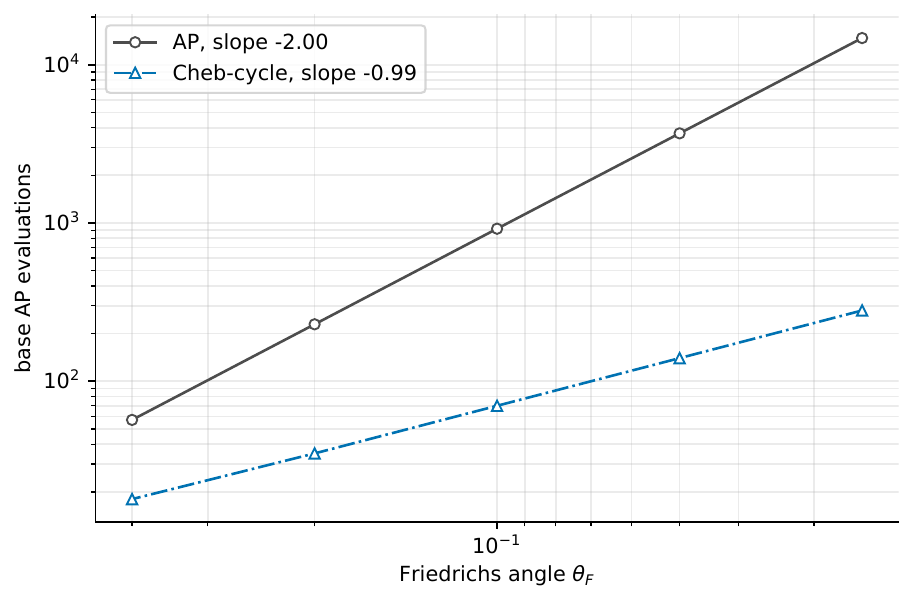}
  \small (b) Base AP evaluations versus Friedrichs angle.
\end{minipage}
\caption{{Spectral-factor and work comparisons for
\eqref{eq:numerical-sphere-affine}. In (a), curves give median
finite-difference measurements \eqref{eq:factor-finite-difference} over
20 coordinate realizations, and crosses give the exact normal spectral
factors. Cheb-$m$ denotes one degree-$m$ factorized cycle, costing $m$
base AP evaluations. In (b), points give median base AP evaluation counts
to reduce the distance to the intersection by $10^{-4}$, with
$m=\lceil1/\theta_F\rceil$ for Chebyshev acceleration. Slopes are fitted
over the five tested angles.}}
\label{fig:rate-validation}
\end{figure}


\subsection{{Preservation of retraction properties}}
\label{subsec:numerics-geometry}

{The next test asks whether acceleration preserves the
geometric accuracy of the limiting map. We compare AP with both
Chebyshev realizations of degrees two and four on two curved surfaces
in $\R^3$,}
\begin{align}
 \Mone&=\{(u,v,\tfrac12\kappa u^2):u,v\in\R\},\notag\\
 \Mtwo&=\{(u,v,\tfrac12\kappa u^2+(s+\gamma u)v):u,v\in\R\},
 \qquad s=(c_F^{-2}-1)^{1/2}{.}
 \label{eq:numerical-parabolic-surfaces}
\end{align}
{The local intersection is the parabola
$\M=\{(u,0,\tfrac12\kappa u^2):u\in\R\}$. We take
$c_F=0.8$, $\kappa=1$, and $\gamma=0.8$; the mixed term $\gamma uv$
reduces the symmetry without changing the Friedrichs cosine at the
origin. Starting from a tangent perturbation $y_t=x+t\eta$, where
$x=0$ and $\eta$ is a unit tangent vector, we approximate the limiting map
$\Psi$ of each method using a limit tolerance of $10^{-14}$ and measure}
\begin{equation}
 e_2(t)=\|P_{T_x\M}(\Psi(y_t)-x-t\eta)\|,
 \qquad
 e_4(t)=\|\Psi(y_t)-P_\M(y_t)\|{.}
 \label{eq:numerical-geometry-errors}
\end{equation}
{The error $e_2$ measures the tangential departure from the
initial displacement $t\eta$, while $e_4$ measures the difference from
metric projection onto the intersection. A second-order retraction
satisfies $e_2(t)=o(t^2)$. In this setting,
\eqref{eq:base-tangent-jet} gives the sharper prediction
$e_2(t)=O(t^3)$, and \Cref{thm:fourth-order-agreement} gives
$e_4(t)=O(t^4)$. We therefore compare fitted log--log slopes with three
and four, respectively. The fits use eight step lengths between
$0.003557$ and $0.2$, with 20 random choices of tangent sign.}

{Figure~\ref{fig:geometry-validation} gives fitted orders
$2.992$--$3.001$ for $e_2$ and $3.963$--$4.057$ for $e_4$,
consistent with the cubic and fourth-order predictions for AP and both
accelerated realizations. The differences between the two accelerated
limiting maps have fitted orders $3.944$ and $3.948$ at degrees two and
four, consistent with \eqref{eq:polynomial-realization-agreement}.}

\begin{figure}[!htbp]
\centering
\includegraphics[width=0.92\textwidth]{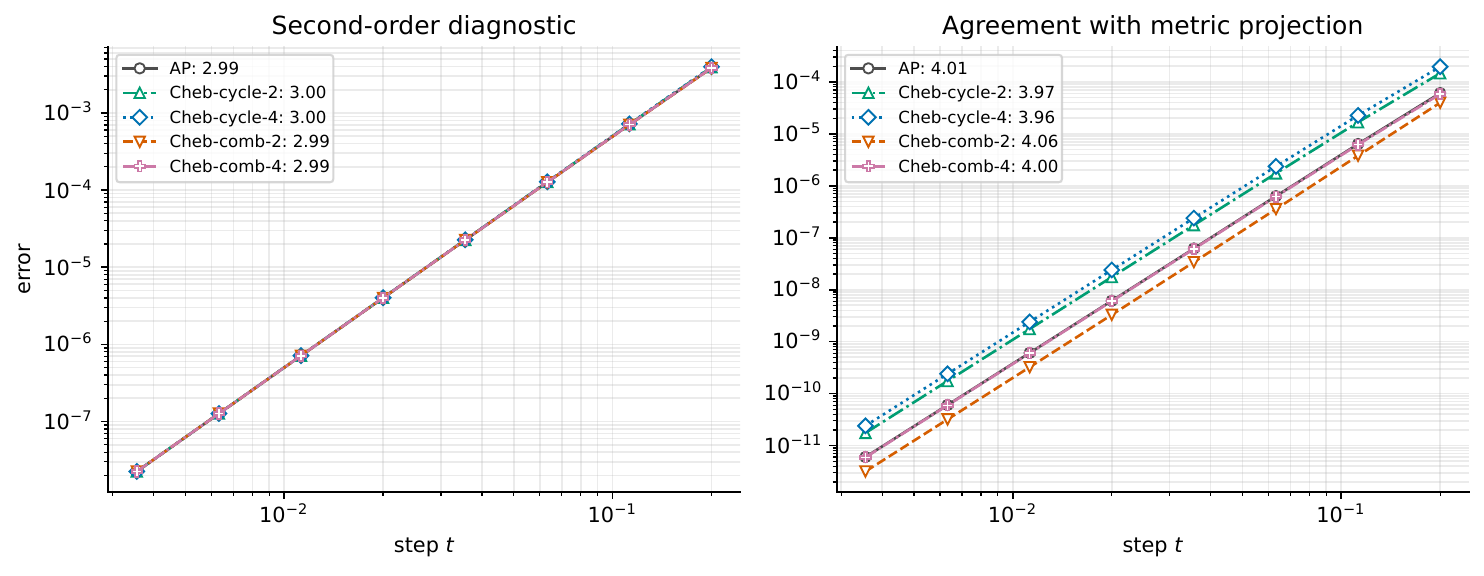}
\caption{Retraction errors
\eqref{eq:numerical-geometry-errors} for AP and Chebyshev acceleration.
Left: tangential error $e_2$; right: metric-projection error $e_4$.
Legends report fitted orders.}
\label{fig:geometry-validation}
\end{figure}


\subsection{{Tight-frame construction and completion}}
\label{subsec:numerics-frames}

{We compare computational costs for finite unit-norm
tight frame (FUNTF) construction and completion at a common feasibility
tolerance. A FUNTF consists of $N$ unit vectors in $\R^d$ whose outer
products sum to $(N/d)I_d$, providing redundant spanning systems for
signal representations and sensing}
\cite{BenedettoFickus2003}.
{Construction seeks all $N$ vectors; completion keeps
$p$ prescribed vectors and seeks the remaining $n:=N-p$ vectors.
Writing the free vectors as $X=[x_1,\ldots,x_n]\in\R^{d\times n}$,
both tasks take the feasibility form}
\begin{equation}
 X\in\mathcal U\cap\mathcal T_C,
 \qquad
 \mathcal U=\{X:\|x_j\|=1,\ j=1,\ldots,n\},
 \qquad
 \mathcal T_C=\{X:XX^T=C\}.
 \label{eq:funtf-feasibility}
\end{equation}
{For construction, $p=0$, $n=N$, and $C=(N/d)I_d$.
For completion, $C=(N/d)I_d-X_{\rm fix}X_{\rm fix}^T$, where
$X_{\rm fix}\in\R^{d\times p}$ contains the prescribed vectors;
$[X_{\rm fix}\ X]$ is a FUNTF precisely when $X$ is feasible.
\Cref{tab:frame-results} lists five problem sizes as $(d\times N;p)$.
Each prescribed block comes from a precomputed feasible frame, and all
methods start from the same normalized Gaussian free vectors for each
instance.}

{AP is a standard method for this problem}
\cite{Liu2021}.
{Projection onto $\mathcal U$ normalizes the columns.
Projection onto $\mathcal T_C$ is a Procrustes step using one thin SVD,}
\begin{equation}
\label{eq:frame-tightness-projection}
 P_{\mathcal T_C}(Y)=C^{1/2}UV^T,
 \qquad U\Sigma V^T=\operatorname{svd}(C^{1/2}Y).
\end{equation}
{Thus one application of $S_C=P_{\mathcal T_C}P_{\mathcal U}$
costs one SVD. We measure both constraints using the normalized
feasibility residual}
\begin{equation}
 r(X)=\left(
 \frac{\|XX^T-C\|_F^2}{\max\{1,\|C\|_F\}^2}
 +\frac1n\sum_{j=1}^n(\|x_j\|-1)^2
 \right)^{1/2}.
 \label{eq:frame-residual}
\end{equation}
{Here $\|\cdot\|_F$ denotes the Frobenius norm.
The two terms measure tightness and unit length,
respectively. All methods stop at $r(X)\leq10^{-10}$ within a budget
of $5\times10^4$ AP evaluations.}

{We compare AP (\Cref{sec:exact-ap}) with the factorized
map \eqref{eq:factored-cycle} at degrees one (R-AP), four (Cheb-4),
and eight (Cheb-8). Comparison with AP measures
the overall gain; comparison with R-AP tests the benefit of higher
degree. All methods begin with ten common AP steps, included in the
cost. For the accelerated methods, up to five recent positive residual
ratios $r_k/r_{k-1}$ give a common heuristic interval estimate
$\widehat\rho$ (\Cref{alg:funtf-cheb-ap}). R-AP relaxes the full
AP map $S_C$ as $(1-\omega)\Id+\omega S_C$, with the minimax
parameter $\omega=2/(2-\widehat\rho)$ from
\eqref{eq:optimal-degree-one-relaxation} for the estimated interval.
Cheb-4 and Cheb-8 factorize $p_m^{\widehat\rho}$ from
\eqref{eq:chebyshev-interval-polynomial} in Leja order and repeat the
cycles. The estimate is not a certified spectral bound. A nonfinite
residual or a cycle residual above $1.2$ times the previous value
rejects the Chebyshev proposal and triggers AP fallback; all attempted
evaluations count.}

{We test 20 paired instances per size: 100 instances
and 400 method runs. The table reports median SVD counts and times,
with each timing measured as the median of three repetitions.
Figure~\ref{fig:frame-speedups} reports median paired SVD speedups
$\nu_{\rm AP}/\nu_{\rm method}$ with 95\% percentile bootstrap
intervals from 5000 resamples. Values above one indicate lower cost
at the same tolerance.}

\begin{algorithm}[!tbp]
\caption{Safeguarded Chebyshev-AP used for FUNTF construction and completion}
\label{alg:funtf-cheb-ap}
\begin{algorithmic}[1]
\Require Target $C\succeq0$, initial $X$ with nonzero columns, degree
$m\in\{4,8\}$, warm-up $w=10$, tolerance $\varepsilon$, evaluation budget $B$
\State $S_C\gets P_{\mathcal T_C}P_{\mathcal U}$,\quad
       $\nu\gets0$,\quad $r_0\gets r(X)$
\For{$k=1,\ldots,w$}
  \If{$r_{k-1}\leq\varepsilon$}
    \State \Return $X$
  \EndIf
  \State $X\gets S_C(X)$,\quad $\nu\gets\nu+1$,\quad $r_k\gets r(X)$
\EndFor
\State $q_{\rm med}\gets
  \operatorname{median}\{r_k/r_{k-1}:\text{up to five recent valid ratios}\}$
\State $\widehat\rho\gets
  \min\{0.9999,\max\{10^{-6},1.02q_{\rm med}\}\}$
\State Compute the Leja-ordered $(\lambda_j,\omega_j)_{j=1}^m$
       from $\widehat\rho$ as in \Cref{alg:cheb-ap-cycle}
\While{$r(X)>\varepsilon$ and $\nu+m\leq B$}
  \State $X_{\rm old}\gets X$,\quad $r_{\rm old}\gets r(X)$,\quad $Y_0\gets X$
  \For{$j=1,\ldots,m$}
    \State $Y_j\gets(1-\omega_j)Y_{j-1}+\omega_jS_C(Y_{j-1})$
  \EndFor
  \State $Y\gets Y_m$,\quad $\nu\gets\nu+m$
  \If{$r(Y)$ is nonfinite or $r(Y)>1.2r_{\rm old}$}
    \State $X\gets X_{\rm old}$
    \For{$j=1,\ldots,m$ while $\nu<B$}
      \State $X\gets S_C(X)$,\quad $\nu\gets\nu+1$
    \EndFor
  \Else
    \State $X\gets Y$
  \EndIf
\EndWhile
\Ensure $X$ and the counted number $\nu$ of base AP evaluations
\end{algorithmic}
\end{algorithm}

\begin{table}[!htbp]
\centering
\caption{FUNTF feasibility results over 20 paired instances. Each entry
is ``median SVD count / median time in milliseconds.'' All 400 reported runs
converged. Boldface marks the smallest median SVD count and time in each
row; ties in SVD count are retained.}
\label{tab:frame-results}
\small
\setlength{\tabcolsep}{5pt}
\begin{tabular}{lcccc}
\toprule
problem $(d\times N;p)$ & AP & R-AP & Cheb-4 & Cheb-8\\
\midrule
FUNTF $(16\times32;0)$
 & $54.5/5.47$ & $37.5/3.94$ & $\mathbf{30/3.55}$ & $34/4.04$\\
FUNTF $(32\times64;0)$
 & $45/10.79$ & $31/7.68$ & $\mathbf{26/6.72}$ & $\mathbf{26}/6.89$\\
FUNTF $(64\times128;0)$
 & $38/34.55$ & $28/25.87$ & $\mathbf{26/23.84}$ & $\mathbf{26}/24.16$\\
completion $(32\times64;16)$
 & $62/12.73$ & $41/8.45$ & $\mathbf{34/7.45}$ & $\mathbf{34}/7.57$\\
completion $(64\times128;32)$
 & $52/34.93$ & $36/24.37$ & $\mathbf{30/20.37}$ & $34/22.98$\\
\bottomrule
\end{tabular}
\end{table}

{All 400 runs attain residuals below $10^{-10}$.
Cheb-4's per-problem median speedups over AP are $1.46$--$1.82$
in SVD count and $1.45$--$1.71$ in time; pooled medians over the
100 paired instances are $1.73$ and $1.61$, respectively. Each
per-problem SVD bootstrap interval lies above one. Cheb-4 uses fewer
SVDs than R-AP on all 100 instances and less time on 93. Cheb-8 does
not uniformly improve on Cheb-4, so a higher degree need not reduce
the cost of a finite run.}

{The 100 interval estimates range from $0.562$ to
$0.714$, with no safeguard rejections. Timings describe this
implementation; SVD counts more directly measure projection work.}

\begin{figure}[!htbp]
\centering
\includegraphics[width=0.82\textwidth]{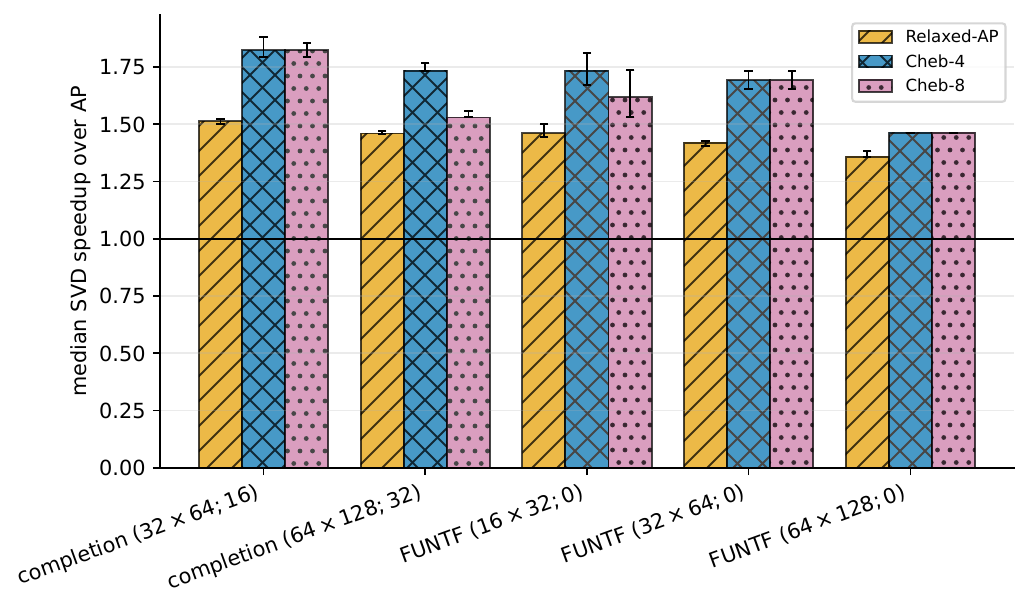}
\caption{{Computational savings relative to AP on the
five FUNTF problems. Points give the median paired SVD-count ratio
$\nu_{\rm AP}/\nu_{\rm method}$ over 20 instances of each size;
values above one indicate fewer SVDs at the same feasibility tolerance.
Error bars give 95\% percentile bootstrap intervals from 5000 resamples.}}
\label{fig:frame-speedups}
\end{figure}

{Taken together, the controlled tests support the predicted
normal spectral factors and preservation of the limiting maps'
geometric accuracy, while the frame tests demonstrate lower projection
cost for Cheb-4 on the specified construction and completion instances.
The comparison with R-AP isolates an additional benefit of the
higher-degree polynomial, and the Cheb-8 results highlight the need to
choose degree by actual work. The finite-angle measurements and the
heuristic frame designs provide numerical evidence within these tests;
they do not extend the local theory to a uniform nonlinear complexity
or a general reliability guarantee.}


\section{Conclusion}
\label{sec:conclusion}

The FPMA framework separates normal convergence from limiting-map behavior
through the tangent--normal linearization of a map that fixes a manifold
pointwise.  Under uniform spectral stability, polynomial acceleration changes
the normal block from $A_x$ to $p(A_x)$ while preserving the normal derivative
$B_x(I-A_x)^{-1}$ of the limiting map.  For exact AP on a clean manifold
intersection, the uniform normal spectral radius is
$\rho_{\rm AP}=\bar c_F^2$, and Chebyshev acceleration gives a smaller
{normal spectral factor} per AP evaluation for every finite degree.  In
the {large-degree Chebyshev limit, the normal spectral
factors predict an improvement in the Friedrichs-angle dependence}
from $O(\theta_F^{-2})$ to $O(\theta_F^{-1})$.  Under the stated
smoothness conditions, the accelerated limiting maps also induce local
second-order retractions and agree with metric projection to fourth order for
tangent perturbations.  The framework further applies to {inexact maps that match metric projections in value and
derivative at the intersection} and to relaxed and generalized AP under
appropriate stability conditions.  The numerical experiments support the
predicted {normal spectral} factors, angle scaling, and retraction behavior, and show
reduced SVD counts on the tested FUNTF problems.

\section*{Acknowledgements}

The research questions, analytical framework, and methods presented in this
work were conceived and developed by the authors. The authors used ChatGPT-5.6 Sol (OpenAI), a generative artificial intelligence (AI) system, to assist with selected technical parts of the mathematical development, including portions of proofs involving recurrence relations, inductive arguments, and related algebraic derivations, as well as with drafting selected portions of the manuscript. The authors subsequently checked and revised all
AI-assisted material and independently verified all mathematical arguments,
computations, and references. The authors take full responsibility for the
accuracy and content of the final manuscript.

\appendix
\input{appendix_limiting_map_proofs}

\input{appendix_inexact_zero_coupling}
\input{appendix_gap_analysis}
\input{appendix_numerics}
\input{appendix_chebyshev_formulas}

\begingroup
\hbadness=1400
\small
\printbibliography
\endgroup

\end{document}

%% file: limiting_map_proof.tex
{To prove \Cref{thm:limiting-map}, we first pass to a
contracting iterate $H=F^N$. The next three lemmas construct the limiting
map of $H$,
establish its joint $C^r$ dependence on the initial point, and recover
the convergence and rate of the original $F$-iteration.}
{We use the norm conventions of \Cref{sec:fixed-manifold-framework}.}

\begin{lemma}[{A contracting iterate and its limiting map}]
\label{lem:contracting-iterate}
\label[lemma]{app:phase-map}
\label[lemma]{app:phase-invariance}
{Suppose \Cref{ass:geometry} holds with $S=F$, and
the normal blocks $A_x$ satisfy \eqref{eq:normal-spectral-stability}. For every $\widehat q\in(\rho_*,1)$, there exist an integer
$N\geq1$, a number $0<\beta<\widehat q^{\,N}$, and an open neighborhood
$\mathcal V_0$ of $K$ such that $H:=F^N$ is defined on $\mathcal V_0$
and $H(\mathcal V_0)\subset\mathcal V_0$.
The iterates $H^k$ converge uniformly on $\mathcal V_0$ to a continuous
map $\Psi_H:\mathcal V_0\to\M$ fixing $\M\cap\mathcal V_0$ pointwise.
There is a constant $C_H>0$ such that}
\begin{equation}\label{eq:app-h-rate}
{
\|H^k(y)-\Psi_H(y)\|
\leq C_H\beta^k\dist(y,\M),
\qquad y\in\mathcal V_0,\quad k\geq0.
}
\end{equation}
{The maps $F^j$, $0\leq j<N$, are defined both on
$\mathcal V_0$ and on $\Psi_H(\mathcal V_0)$.}
\end{lemma}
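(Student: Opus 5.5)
The plan is to work in the tubular charts $\Theta_\ell(u,v)$ of \eqref{eq:tubular-chart}, where $\dist(\Theta(u,v),\M)=\|v\|$, and compare $F$ near $K$ with its linearization along the zero section.

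\textbf{Choice of $N$.} Fix $\widehat q\in(\rho_*,1)$ and pick $q\in(\rho_*,\widehat q)$. By \Cref{rem:bundle-powers}, $\|A_x^k\|\le C_q q^k$ on $K$. Choose $N$ so large that $2C_qq^N<\widehat q^{\,N}$, and set $\beta:=2C_q q^N$; then $\beta<\widehat q^{\,N}$. Shrinking $\beta$'s admissible range slightly if needed, we may also assume $\beta<1$. We never need a norm bound on $A_x$ itself, only on its $N$th power, which is the point of passing to $H=F^N$.

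\textbf{One-step estimate for $H$.} Since $F$ fixes $\M$, $F^N$ is defined and $C^{r,\holderexp}$ on a neighborhood of $K$. At $x\in\M$ we have $DH(x)=DF(x)^N$, and by the block formula its normal block is $A_x^N$. Write $y=x+n$ with $x=P_\M(y)$, using \Cref{lem:tubular-coordinates}. Taylor expansion at $x$ gives
\[
H(y)=x+D H(x)n+O(\|n\|^{1+\holderexp}).
\]
Hence
\[
\dist(H(y),\M)\le\|P_{N_x\M}(H(y)-x)\|+O(\|n\|^{2})\le \bigl(C_q q^N+o(1)\bigr)\|n\|.
\]
The second term uses the fact that the tangential displacement of size $O(\|n\|)$ costs only a quadratic distance via the graph representation \eqref{eq:local-manifold-graph}. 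We also get $\|H(y)-x\|\le M\|n\|$ with $M=\sup\|DH\|$. By continuity of $A_x$ and compactness, the same bounds hold for base points $x$ in a compact neighborhood $K'$ of $K$ in $\M$. So for $\dist(y,\M)<\delta$ and $P_\M(y)\in K'$ we obtain
\[
\dist(H(y),\M)\le\beta\dist(y,\M),\qquad \|P_\M H(y)-P_\M(y)\|\le M'\dist(y,\M).
\]

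\textbf{Invariant neighborhood and convergence (main obstacle).} The difficulty is that base points drift, so the iterates might leave the region over $K'$ where the estimates hold. Define
\[
\mathcal V_0:=\bigl\{y:\dist(y,\M)<\delta,\ \dist(P_\M y,K)+\kappa\,\dist(y,\M)<\varepsilon\bigr\}
\]
with $\kappa:=M'/(1-\beta)$ and $\varepsilon$ small enough that the $\varepsilon$-neighborhood of $K$ in $\M$ lies in $K'$. This is a Lyapunov-type functional. Along $H$, the drift of the base point is at most $M'd$ while the distance term drops by $\kappa(1-\beta)d=M'd$, so the functional does not increase. Therefore $H(\mathcal V_0)\subset\mathcal V_0$, and $\mathcal V_0$ is open and contains $K$.

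Iterating gives $\dist(H^k y,\M)\le\beta^k\dist(y,\M)$. Combined with $\|H^{k+1}y-H^ky\|\le(M+1)\beta^k\dist(y,\M)$, the sequence $H^k(y)$ is uniformly Cauchy. Its limit $\Psi_H(y)$ lies in $\M$ because the distances tend to zero and $\M$ is locally closed near $K'$. The limit is continuous as a uniform limit of continuous maps, fixes $\M\cap\mathcal V_0$, and the geometric tail sum yields \eqref{eq:app-h-rate} with $C_H=(M+1)/(1-\beta)$.

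\textbf{The maps $F^j$.} Finally, $F^j$ for $j<N$ is defined near $K'$ because $F$ fixes $\M$ and is continuous. Shrinking $\delta$ and $\varepsilon$ at the outset so that $\mathcal V_0$ and $\Psi_H(\mathcal V_0)\subset\M\cap K'$ both lie in this domain gives the last claim.
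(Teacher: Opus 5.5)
Your proof is correct. It rests on the same mechanism as the paper's. You choose $N$ with $\sup_K\|A_x^N\|<\beta<\widehat q^{\,N}$, which gives normal contraction by $\beta$ and base-point drift bounded by a constant times the normal distance. You then reserve a margin proportional to that distance, so the geometric series of drifts cannot carry an orbit out of the region where the estimates hold.

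The difference is in packaging. The paper works chart by chart. In each tubular chart it writes $H$ as $(u+h(u,v),G(u,v))$ and reads off $\|G\|\le\beta\|v\|$ and $\|h\|\le L_h\|v\|$ from the coordinate model. It then proves invariance of the wedge $W_\ell=\{d_\ell(u)>a\|v\|\}$ with $a=L_h/(1-\beta)$, so every orbit stays in one fixed coordinate cylinder, and $\mathcal V_0$ is the union of the chart images.

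You instead use the global metric projection and the single functional $\dist(P_\M y,K)+\kappa\,\dist(y,\M)$ with $\kappa=M'/(1-\beta)$. Your normal contraction comes from a Taylor expansion plus the quadratic estimate $\dist(x+t,\M)=O(\|t\|^2)$ for tangential $t$, which is \Cref{lem:tangent-normal}, uniform on compacts.

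What each route buys:
\begin{itemize}
\item Your route is chart-free and avoids finite-cover and overlap bookkeeping.
\item The paper's route guarantees that each orbit lives in a single chart. That is exactly what \Cref{lem:limiting-map-smoothness,lem:original-iteration} exploit to run the variational recurrences and the segment estimate in fixed coordinates. With your invariant set, an orbit may cross charts, so those later steps would need transition maps or a global tangent--normal splitting.
\end{itemize}

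Some minor points:
\begin{itemize}
\item $\beta<1$ is automatic from $\beta<\widehat q^{\,N}$.
\item Define $\mathcal V_0$ inside a tubular neighborhood of $K'$ that contains the $M\delta$-neighborhood of $K'$. Then $P_\M(H(y))$ and the identity $\dist(H(y),\M)=\|H(y)-P_\M H(y)\|$ are available when you check $\Phi(H(y))\le\Phi(y)$.
\item The limit lies in $\M$ most cleanly because $P_\M H^k(y)$ stays in the compact set $\{z\in\M:\dist(z,K)\le\varepsilon\}\subset K'$.
\end{itemize}
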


\begin{proof}[{Proof sketch}]
{Use \eqref{eq:normal-power-bound} to choose $H=F^N$ with
$\sup_{x\in K}\|A_x^N\|<\beta$.
While the iterates remain in a tubular chart \eqref{eq:tubular-chart},
their coordinates $(u_k,v_k)$ satisfy}
\[
{
\|v_k\|\leq\beta^k\|v_0\|,
\qquad
\|u_{k+1}-u_k\|=O(\beta^k\|v_0\|).
}
\]
{A margin from the chart boundary that bounds the total
base displacement gives an $H$-invariant neighborhood, so these bounds
hold for all $k$.
Their geometric tails define a continuous limiting map and give
\eqref{eq:app-h-rate}. A finite chart cover makes the estimates uniform.
The full proof is given in \Cref{app:proof-contracting-iterate}.}
\end{proof}

\begin{lemma}[{Smoothness of the limiting map}]
\label{lem:limiting-map-smoothness}
{Under the assumptions of \Cref{lem:contracting-iterate},
use its construction with a sufficiently small common normal radius.
Then the ambient limiting map satisfies
$\Psi_H\in C^r(\mathcal V_0,\E)$ and, for $1\leq j\leq r$,}
\begin{equation}\label{eq:app-derivative-convergence}
{
D^jH^k\longrightarrow D^j\Psi_H
\quad\text{locally uniformly on }\mathcal V_0.
}
\end{equation}
\end{lemma}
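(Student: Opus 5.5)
We prove that the derivatives $D^jH^k$ are Cauchy, uniformly on compact subsets of $\mathcal V_0$, for $j=1,\dots,r$. Since $H^k\to\Psi_H$ uniformly by \Cref{lem:contracting-iterate}, the standard theorem on uniform convergence of derivatives then gives $\Psi_H\in C^r$ and \eqref{eq:app-derivative-convergence}.

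We work in the tubular coordinates $(u,v)$ of \eqref{eq:tubular-chart}. Because $H=F^N$ fixes $\M$ pointwise, its coordinate representation can be written as
\[
H(u,v)=\bigl(u+b(u,v),\,a(u,v)\bigr),
\]
with $b(u,0)=0$ and $a(u,0)=0$. Hence $b(u,v)=\beta_H(u,v)v$ and $a(u,v)=\alpha_H(u,v)v$, where $\alpha_H,\beta_H$ are $C^{r-1,\holderexp}$ matrix fields and $\alpha_H(u,0)$ represents $A_x^N$. By the choice of $N$ we have $\|\alpha_H(u,0)\|<\beta$. Shrinking the common normal radius, we get $\|\alpha_H(u,v)\|\le\beta'<\widehat q^{\,N}$, with $\beta'$ kept strictly below the rate budget.

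\emph{First derivatives.} Differentiate the recursion $y_{k+1}=H(y_k)$ and write $M_k:=D H^k(y_0)$, split into a $u$-row and a $v$-row. The chain rule gives
\[
D v_{k+1}=\alpha_H\,Dv_k+O(\|v_k\|)Du_k,\qquad
Du_{k+1}=Du_k+O(\|v_k\|)Du_k+O(1)Dv_k.
\]
The $O(\|v_k\|)$ terms come from differentiating $\alpha_H v$ and $\beta_H v$ in $u$. Using $\|v_k\|\le C\beta^k\|v_0\|$ and a discrete Gronwall argument, $\|Du_k\|$ stays bounded, because the product $\prod(1+C\beta^k)$ converges. Then $\|Dv_k\|\le C k\beta^k$, which is still geometric with any slightly larger factor. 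Consequently $\|Du_{k+1}-Du_k\|$ is summable, so $DH^k$ is uniformly Cauchy. Along the way we shrink the normal radius so that the perturbation constants do not destroy contraction; this is where the phrase ``sufficiently small common normal radius'' is used.

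\emph{Second derivatives ($r=2$).} Differentiating once more produces the same linear recursion for $(D^2u_k,D^2v_k)$, now with inhomogeneous terms. These terms are quadratic in $(Du_k,Dv_k)$ with coefficients $D^2H$ evaluated along the orbit. The terms involving only $Du_k\otimes Du_k$ carry a factor $\|v_k\|$ or $\|Dv_k\|$, because the $u$-derivatives of $H$ restricted to $v=0$ vanish beyond the identity. The inhomogeneity is therefore $O(k\beta^k)$. The same Gronwall argument gives bounded $D^2u_k$, geometric decay of $D^2v_k$, and summable increments.

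The main obstacle is the bookkeeping at second order. We must confirm that every forcing term either carries a decaying factor ($v_k$, $Dv_k$, or a H\"older modulus of $D^2H$ evaluated at $\|v_k\|^{\holderexp}$), or is multiplied by the contracting normal row. Only then is the tangential sum convergent rather than linearly growing. Passing between charts along the orbit uses the finite cover and the uniform chart bounds, and pulling back through $\Theta_\ell$ gives the ambient statement.
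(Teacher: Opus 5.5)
Your proposal is correct in substance and follows the paper's route: variational recurrences for the tangential and normal rows of $D_y(u_k,v_k)$ and, for $r=2$, their second-order analogues, whose sources decay because $D^2_{uu}H$ vanishes on the zero section; bounded tangential rows, geometrically decaying normal rows and summable increments; and then passage to the limit by uniform convergence of derivatives. Two details need repair, though neither changes the argument. First, for $r=1$, and for the $uu$-sources when $r=2$, the coefficients are only $O(\|v_k\|^{\holderexp})$, so the decay is $\sigma^k$ with $\sigma\in(\beta^{\holderexp},1)$ rather than $k\beta^k$. Second, the tangential recursion is forced by the $O(1)\,Dv_k$ term, so the bound on $Du_k$ must be closed jointly with the decay of $Dv_k$ (the paper uses a bootstrap induction with small $\delta$), not from the convergent product alone.
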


\begin{proof}[{Proof sketch}]
{Differentiate the coordinate iterates in the full
initial pair $y=(u_0,v_0)$.
The variational recurrences give decaying normal derivatives and
summable increments of the base derivatives.
For $r=2$, the H\"older bounds and decaying normal variations also make
the second-order source terms summable.
On every compact coordinate ball $Q$ inside the final coordinate domain,}
\[
{
\sum_{k\geq0}\sup_Q\|D_y^j(u_{k+1}-u_k)\|<\infty,
\qquad
\sup_Q\|D_y^jv_k\|\longrightarrow0,
\quad 1\leq j\leq r.
}
\]
{The fundamental theorem of calculus identifies the
derivative limits with those of $(u_\infty,0)$.
Composition with the tubular charts gives the joint $C^r$ ambient map
and \eqref{eq:app-derivative-convergence}.
The full proof is given in \Cref{app:proof-limiting-map-smoothness}.}
\end{proof}

\begin{lemma}[{Recovery of the original iteration}]
\label{lem:original-iteration}
{Under the assumptions of \Cref{lem:contracting-iterate},
use its construction for a fixed $\widehat q\in(\rho_*,1)$.
All $F$-iterates are defined on $\mathcal V_0$ and converge to
$\Psi_F=\Psi_H$. For some $C_{\widehat q}>0$,}
\begin{equation}\label{eq:app-original-rate}
{
\|F^k(y)-\Psi_F(y)\|
\leq C_{\widehat q}\widehat q^{\,k}\dist(y,\M),
\qquad y\in\mathcal V_0,\quad k\geq0.
}
\end{equation}
{The limiting maps obtained from different choices of
$N$ and $\widehat q$ agree on the intersections of their domains.}
\end{lemma}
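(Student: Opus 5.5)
We need to prove Lemma "Recovery of the original iteration". We are given $H=F^N$, the invariant neighborhood $\mathcal V_0$, and the limiting map $\Psi_H$ with the rate \eqref{eq:app-h-rate} and $\beta<\widehat q^{\,N}$. We also know that $F^j$, $0\le j<N$, is defined on $\mathcal V_0$ and on $\Psi_H(\mathcal V_0)\subset\M$, and that $F$ fixes $\M$ pointwise.

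The plan is to write $k=lN+j$ with $0\le j<N$. Then $F^k(y)=F^j(H^l(y))$, which is defined because $H^l(y)\in\mathcal V_0$ by invariance and $F^j$ is defined on $\mathcal V_0$. Since $\Psi_H(y)\in\M$ is fixed by $F$, we have $F^j(\Psi_H(y))=\Psi_H(y)$. Hence
\[
\|F^k(y)-\Psi_H(y)\|=\|F^j(H^l(y))-F^j(\Psi_H(y))\|.
\]
It remains to bound this difference by $L_j\|H^l(y)-\Psi_H(y)\|$, using a Lipschitz constant $L_j$ for $F^j$. For that we need the segment between $H^l(y)$ and $\Psi_H(y)$ to lie in a compact convex region where $DF^j$ is bounded.

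This Lipschitz step is the main obstacle. I would handle it by shrinking $\mathcal V_0$ at the start, if necessary, so that it lies in a set where $F^j$ is $C^1$ with bounded derivative for $0\le j<N$. On a small neighborhood of $K$ the points $H^l(y)$ and $\Psi_H(y)$ are within $C_H\dist(y,\M)$ of each other, which is small, so their segment stays inside a fixed compact neighborhood of $K$ contained in the domain of every $F^j$. Invariance under $H$ must survive this shrinking. I would secure that by taking the shrunken neighborhood to be the one produced by the construction of \Cref{lem:contracting-iterate} with a smaller normal radius and base margin, so the lemma's invariance is inherited. The alternative is to use the mean value theorem on a uniform-radius tube around $\M$ over a slightly larger compact set.

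With $L:=\max_{j<N}L_j$, \eqref{eq:app-h-rate} gives
\[
\|F^k(y)-\Psi_H(y)\|\le L C_H\beta^{l}\dist(y,\M)\le LC_H\widehat q^{\,Nl}\dist(y,\M)\le LC_H\widehat q^{\,-N}\widehat q^{\,k}\dist(y,\M).
\]
So \eqref{eq:app-original-rate} holds with $C_{\widehat q}:=LC_H\widehat q^{\,-N}$. This proves convergence of the full sequence $F^k(y)$, and by definition $\Psi_F=\Psi_H$.

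For agreement across different choices, take two constructions $(N,\widehat q,\mathcal V_0)$ and $(N',\widehat q',\mathcal V_0')$ and a point $y$ in the intersection of their domains. Both $\Psi_H(y)$ and $\Psi_{H'}(y)$ are equal to $\lim_k F^k(y)$ of the same sequence, so they coincide by uniqueness of limits.
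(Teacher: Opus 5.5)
Your proposal is correct and follows the paper's proof: the same splitting $k=Nn+j$, the identity $F^j(\Psi_H(y))=\Psi_H(y)$, a uniform derivative bound for the finitely many maps $F^j$, the conversion $\beta^n\le\widehat q^{\,Nn}\le\widehat q^{\,-(N-1)}\widehat q^{\,k}$, and uniqueness of limits for the agreement claim. The one difference is where you apply the mean value estimate. The paper bounds $D(F^j\circ\Theta_\ell)$ on the compact coordinate domain and integrates along the coordinate segment from $(u_\infty,0)$ to $(u_n,v_n)$, which lies in the convex cylinder $\mathcal O_\ell\times B_\delta(0)$, so the bound holds on $\mathcal V_0$ as constructed; your ambient version needs one further reduction of $\delta$, which is legitimate because $C_H$ is fixed by the $\delta_0$-level bounds and does not grow as $\delta$ decreases.
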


{The full proof is given in
\Cref{app:proof-original-iteration}.}

\begin{proof}[{Proof of \Cref{thm:limiting-map}}]
\begingroup
Applying \Cref{lem:contracting-iterate,lem:limiting-map-smoothness,lem:original-iteration}
for one choice of $\widehat q\in(\rho_*,1)$ gives an open neighborhood
$V$ of $K$ on which the full sequence converges and the limiting map
\eqref{eq:limiting-map-definition} is $C^r$, with $\Psi_F=\Psi_H$ fixing
$\M\cap V$ pointwise.
This proves \textup{(i)} and the regularity assertion in \textup{(ii)}.
At $x\in\M\cap V$, \eqref{eq:app-derivative-convergence} and the
fixed-point derivative blocks in \eqref{eq:tangent-normal-block} give
\begin{equation}\label{eq:app-phase-derivative}
\begin{aligned}
D\Psi_F(x)
&=\lim_{k\to\infty}D(F^{Nk})(x)\\
&=\lim_{k\to\infty}
\begin{pmatrix}
I&B_x\displaystyle\sum_{s=0}^{Nk-1}A_x^s\\
0&A_x^{Nk}
\end{pmatrix}
=\begin{pmatrix}I&B_x(I-A_x)^{-1}\\0&0\end{pmatrix}.
\end{aligned}
\end{equation}
Here the normal blocks remain stable on the relative neighborhood used
in \Cref{lem:contracting-iterate}; hence the matrix limit is valid at
every such fixed point. This proves \eqref{eq:limiting-map-derivative}
and completes \textup{(ii)}.

For any $\widehat q\in(\rho_*,1)$,
\Cref{lem:contracting-iterate,lem:original-iteration} give
\eqref{eq:app-original-rate} on a possibly different neighborhood. The last assertion of
\Cref{lem:original-iteration} identifies its limiting map with $\Psi_F$
on the intersection with $V$. Restricting $V$ to this intersection proves
\eqref{eq:limiting-map-rate}, hence \textup{(iii)}, with its stated
dependence on $\widehat q$.
\endgroup
\end{proof}

%% file: appendix_limiting_map_proofs.tex
\section{{Proofs of the limiting-map lemmas}}
\label[appendix]{app:limiting-map-proofs}

{This appendix gives the complete proofs of
\Cref{lem:contracting-iterate,lem:limiting-map-smoothness,lem:original-iteration}.}
{We use the norm conventions of \Cref{sec:fixed-manifold-framework}.
In the normal coordinates of \eqref{eq:tubular-chart}, $B_\delta(0)$ denotes
the open Euclidean ball of radius $\delta$ in $\R^c$, and
$\overline{B_\delta(0)}$ its closure.}

\subsection{Proof of Lemma~\ref{lem:contracting-iterate}}
\label{app:proof-contracting-iterate}

\begin{proof}
{Fix $\widehat q\in(\rho_*,1)$. By
\eqref{eq:normal-power-bound}, choose $N\geq1$ and $\beta$ such that}
\begin{equation}\label{eq:app-contracting-iterate-choice}
{\sup_{x\in K}\|A_x^N\|<\beta<\widehat q^{\,N}.}
\end{equation}
{Set $H=F^N$. Because $F$ fixes $K$, the finitely many
maps $F,\ldots,F^N$ have a common open domain near $K$.
The map $H$ is $C^{r,\holderexp}$, fixes $\M$ pointwise on its domain,
and has normal block $A_x^N$ by \eqref{eq:tangent-normal-block}.
By continuity, choose an open neighborhood
$U\subset\E$ of $K$ in that common domain such that
$\|A_x^N\|<\beta$ for every $x\in\M\cap U$.}

{Choose a local tubular chart $\Theta_\ell$ from
\eqref{eq:tubular-chart} with base patch in $\M\cap U$.
Since $H$ fixes this patch, its coordinate representation near $v=0$ is}
\begin{equation}\label{eq:app-coordinate-model}
{
(\Theta_\ell^{-1}\circ H\circ\Theta_\ell)(u,v)
=(u+h(u,v),G(u,v)).
}
\end{equation}
{Here $h(u,v)$ is the change in the coordinates of the
base point, and $G(u,v)$ gives the coefficients of the updated normal
vector in the frame at $\chi_\ell(u+h(u,v))$.
In particular, $\|G(u,v)\|=\dist(H(\Theta_\ell(u,v)),\M)$.
We omit the chart index in these component maps and, in coordinate
expressions, also denote the map in \eqref{eq:app-coordinate-model} by $H$.
Because the ambient map fixes $\chi_\ell(u)$, the map in
\eqref{eq:app-coordinate-model} fixes $(u,0)$, so}
\(h(u,0)=0\), \(G(u,0)=0\),
\(D_uh(u,0)=0\), and \(D_uG(u,0)=0{.}\)
{Write}
\(D(u):=D_vh(u,0)\) and \(A_H(u):=D_vG(u,0){.}\)
{The matrix $A_H(u)$ represents $A_{\chi_\ell(u)}^N$
in the orthonormal normal frame of \eqref{eq:tubular-chart}; hence
$\|A_H(u)\|=\|A_{\chi_\ell(u)}^N\|<\beta$.
On a compact inner coordinate domain, $C^{1,\holderexp}$ regularity
gives local constants $L_0,L_1$ such that, for sufficiently small $v$,}
\begin{align}
\|h(u,v)-D(u)v\|&\leq L_0\|v\|^{1+\holderexp}, \label{eq:app-h0}\\
\|G(u,v)-A_H(u)v\|&\leq L_0\|v\|^{1+\holderexp}, \label{eq:app-g0}\\
\|D_uh(u,v)\|+\|D_vh(u,v)-D(u)\|+\|D_uG(u,v)\|
&\leq L_1\|v\|^{\holderexp}. \label{eq:app-d1}
\end{align}
{Compactness of $K$ gives finitely many such charts,
with outer base domains $\mathcal O_\ell^+$ and inner coordinate balls
$\mathcal O_\ell\Subset\mathcal O_\ell^+$ whose images cover $K$.
Choose a common $\delta_0>0$ so that each compact input set
$\Theta_\ell(\overline{\mathcal O_\ell}\times\overline{B_{\delta_0}(0)})$
lies in $U$ and its $H$-image in the corresponding chart image.
The charts, their inverses and coordinate updates have uniform
$C^{r,\holderexp}$ bounds on these fixed compact sets.
Thus \eqref{eq:app-h0}--\eqref{eq:app-d1} hold with common $L_0,L_1$ for
$u\in\mathcal O_\ell$, $\|v\|\leq\delta_0$, and $\|D(u)\|\leq D_0$
for some $D_0>0$; these bounds remain valid under radius reductions.}

{Using \eqref{eq:app-contracting-iterate-choice} and the identity
$D_vG(u,0)=A_H(u)$, continuity allows us to reduce $\delta_0$ so that}
\begin{equation}\label{eq:app-normal-contraction}
\|G(u,v)\|\leq\beta\|v\|,
\qquad
\|D_vG(u,v)\|\leq\beta
\end{equation}
throughout those charts.

{For $r=2$, the coordinate map in
\eqref{eq:app-coordinate-model} is $C^{2,\holderexp}$.
Differentiating $h(u,0)=G(u,0)=0$ twice with respect to $u$ gives
$D_{uu}^2h(u,0)=D_{uu}^2G(u,0)=0$.
H\"older continuity of these second derivatives and uniform bounds for
the mixed and pure $v$ second derivatives on the fixed compact domains
give a common $L_2$ such that}
\begin{align}
\|D_{uu}^2h(u,v)\|&\leq L_2\|v\|^{\holderexp}, &
\|D_{uv}^2h(u,v)\|+\|D_{vv}^2h(u,v)\|&\leq L_2,
\label{eq:app-d2h}\\
\|D_{uu}^2G(u,v)\|&\leq L_2\|v\|^{\holderexp}, &
\|D_{uv}^2G(u,v)\|+\|D_{vv}^2G(u,v)\|
&\leq L_2. \label{eq:app-d2g}
\end{align}
{The mixed derivative satisfies
$D_{uv}^2h(u,0)=D_uD(u)$ and may be nonzero.}

{We first estimate the iterates in local coordinates using
\eqref{eq:app-coordinate-model}. By \eqref{eq:app-h0},}
\(\|h(u,v)\|\leq L_h\|v\|{,}\)
{where $L_h:=D_0+L_0\delta_0^{\holderexp}$.
For a finite sequence of iterates with coordinates
$(u_k,v_k):=\Theta_\ell^{-1}(H^k(y))$ remaining in
$\mathcal O_\ell\times B_{\delta_0}(0)$, the update is
$u_{k+1}=u_k+h(u_k,v_k)$ and $v_{k+1}=G(u_k,v_k)$.
By \eqref{eq:tubular-projection}, $\|v_k\|=\dist(H^k(y),\M)$,
and \eqref{eq:app-normal-contraction} gives}
\begin{equation}\label{eq:app-orbit-normal}
\|v_k\|\leq\beta^k\|v_0\|.
\end{equation}
{The bound for $h$ then yields}
\begin{equation}\label{eq:app-orbit-drift}
\|u_{k+1}-u_k\|
\leq L_h\|v_k\|
\leq L_h\beta^k\|v_0\|.
\end{equation}
{For $j>k$ in this segment, summing
\eqref{eq:app-orbit-drift} gives
$\|u_j-u_k\|\leq L_h\sum_{i=k}^{j-1}\beta^i\|v_0\|
\leq L_h\beta^k\|v_0\|/(1-\beta)$.
Thus, if the iterates remain in the coordinate cylinder for all $k$,
then $u_k\to u_\infty$, $v_k\to0$, and}
\begin{equation}\label{eq:app-coordinate-tail}
\|u_\infty-u_k\|\leq\frac{L_h}{1-\beta}\beta^k\|v_0\|,
\qquad \|v_k\|\leq\beta^k\|v_0\|{.}
\end{equation}

{The preceding sum bounds the total base displacement
by $a\|v_0\|$, where $a:=L_h/(1-\beta)$.
We now use this bound to choose a domain that keeps all iterates
in the same chart. Choose $0<\delta\leq\min\{1,\delta_0\}$.
If $\dim\M=0$, take
$W_\ell:=\mathcal O_\ell\times B_\delta(0)$; the base coordinate is constant
and \eqref{eq:app-normal-contraction} gives invariance.
If $\dim\M>0$, let
$d_\ell(u):=\dist(u,\partial\mathcal O_\ell)$ and reserve the
corresponding margin to the boundary by taking}
\begin{equation}\label{eq:app-wedge-domain}
W_\ell:=\left\{(u,v)\in\mathcal O_\ell\times B_\delta(0):
d_\ell(u)>a\|v\|\right\}{.}
\end{equation}
{For $(u,v)\in W_\ell$, we have
$\|h(u,v)\|\leq L_h\|v\|\leq a\|v\|<d_\ell(u)$,
so $u+h(u,v)\in\mathcal O_\ell$. The $1$-Lipschitz property of $d_\ell$ yields}
\[
\begin{aligned}
d_\ell(u+h(u,v))
&\geq d_\ell(u)-L_h\|v\|\\
&>a\|v\|-L_h\|v\|
=a\beta\|v\|\geq a\|G(u,v)\|{.}
\end{aligned}
\]
{Together with $\|G(u,v)\|\leq\beta\|v\|<\delta$,
this proves invariance in the positive-dimensional case as well.
In either case, $W_\ell$ is open and contains
$\mathcal O_\ell\times\{0\}$. Thus $\mathcal V_0$ is an open neighborhood
of $K$, with}
\begin{equation}\label{eq:app-forward-domain}
\mathcal V_0:=\bigcup_\ell\Theta_\ell(W_\ell),\qquad
{H(\mathcal V_0)\subset\mathcal V_0.}
\end{equation}
{Reducing $\delta$ preserves these properties and the fixed coefficient bounds.}

{For $y=\Theta_\ell(u_0,v_0)$ with $(u_0,v_0)\in W_\ell$,
invariance now ensures that \eqref{eq:app-orbit-normal}--\eqref{eq:app-coordinate-tail}
hold for all $k\geq0$. For $\dim\M>0$, their total-displacement bound gives}
\[
{d_\ell(u_\infty)\geq d_\ell(u_0)
-\sum_{k=0}^\infty\|u_{k+1}-u_k\|
\geq d_\ell(u_0)-a\|v_0\|>0.}
\]
{Since $u_k\in\mathcal O_\ell$, the limit lies in
$\mathcal O_\ell$; for $\dim\M=0$, it equals $u_0$.
Define the ambient limiting map by}
\begin{equation}\label{eq:app-coordinate-limit}
{\Psi_H(\Theta_\ell(u_0,v_0)):=\Theta_\ell(u_\infty,0)=\chi_\ell(u_\infty).}
\end{equation}
{These definitions agree on overlaps because they are
limits of the same ambient sequence $H^k(y)$.}
{By \eqref{eq:tubular-chart} and \eqref{eq:tubular-projection},
$\|v_0\|=\dist(\Theta_\ell(u_0,v_0),\M)$.
Both $(u_k,v_k)$ and $(u_\infty,0)$ lie in the convex coordinate cylinder
$\mathcal O_\ell\times B_{\delta_0}(0)$.
The fixed derivative bounds for $\Theta_\ell$ and
\eqref{eq:app-coordinate-tail} therefore give a uniform constant $C_H>0$ such that}
\[
\|H^k(y)-\Psi_H(y)\|
\leq C_H\beta^k\dist(y,\M),
\qquad y\in \mathcal V_0,\quad k\geq0{.}
\]

{Since $\dist(y,\M)<\delta$ on $\mathcal V_0$,
\eqref{eq:app-h-rate} gives uniform convergence of the continuous
maps $H^k$, so $\Psi_H$ is continuous. It fixes $\M\cap\mathcal V_0$
because $H$ fixes $\M$ pointwise.
Finally, $\mathcal V_0\cup\Psi_H(\mathcal V_0)$ is contained in the union
of the fixed input sets, hence in $U$. Thus $F^j$, $0\leq j<N$, are defined on both
$\mathcal V_0$ and $\Psi_H(\mathcal V_0)$.}
\end{proof}

\subsection{Proof of Lemma~\ref{lem:limiting-map-smoothness}}
\label{app:proof-limiting-map-smoothness}

\begin{proof}
{We estimate the derivatives of the coordinate iterates
with respect to all initial coordinates. Their locally uniform convergence
will give the required joint regularity.
Use \eqref{eq:app-coordinate-model} and the fixed coefficient bounds from
\Cref{lem:contracting-iterate}. Each reduction of the normal radius uses
\eqref{eq:app-wedge-domain} when $\dim\M>0$, preserving the invariance
in \eqref{eq:app-forward-domain} and these coefficient bounds.}

\paragraph{{First derivatives.}}
{For an initial coordinate pair $y=(u_0,v_0)\in W_\ell$,
set $U_k:=D_yu_k$ and $V_k:=D_yv_k$.
The initial derivatives are coordinate projections, so
$\|U_0\|,\|V_0\|\leq1$.
The derivative of \eqref{eq:app-coordinate-model} has the block form}
\begin{equation}\label{eq:app-variational-block}
DH(u_k,v_k)=
\begin{pmatrix}
I+E_k&R_k\\
Q_k&A_{H,k}
\end{pmatrix}.
\end{equation}
{Here $E_k=D_uh$, $R_k=D_vh$, $Q_k=D_uG$, and
$A_{H,k}=D_vG$, all evaluated at $(u_k,v_k)$.
Since $\|v_0\|<\delta\leq\delta_0$, \eqref{eq:app-orbit-normal} gives
$\|v_k\|\leq\delta\beta^k$.
Using \eqref{eq:app-d1}, the bound $\|D(u)\|\leq D_0$ from
\Cref{lem:contracting-iterate}, and \eqref{eq:app-normal-contraction}, we obtain}
\begin{equation}\label{eq:app-first-blocks}
{
\begin{aligned}
\|E_k\|+\|Q_k\|
&\leq L_1\|v_k\|^{\holderexp}
\leq L_1\delta^{\holderexp}\beta^{\holderexp k},\\
\|R_k\|
&\leq\|D(u_k)\|+\|D_vh(u_k,v_k)-D(u_k)\|\\
&\leq D_0+L_1\|v_k\|^{\holderexp}
\leq D_0+L_1\delta_0^{\holderexp}=: \overline R,\\
\|A_{H,k}\|
&=\|D_vG(u_k,v_k)\|\leq\beta.
\end{aligned}
}
\end{equation}
{The fixed radius $\delta_0$ makes $\overline R$
independent of subsequent reductions of $\delta$.}
{The chain rule and \eqref{eq:app-variational-block} give}
\begin{align}
U_{k+1}&=(I+E_k)U_k+R_kV_k, \label{eq:app-U}\\
V_{k+1}&=Q_kU_k+A_{H,k}V_k. \label{eq:app-V}
\end{align}
{We seek a uniform bound for $U_k$ and geometric decay
of $V_k$, which together will make the increments of $U_k$ summable.
Choose $\sigma\in(\beta^{\holderexp},1)$.
Since $\beta\leq\beta^{\holderexp}<\sigma$, the convolution estimate is}
\begin{equation}\label{eq:app-geometric-convolution}
{
\sum_{j=0}^k\beta^{k-j}\beta^{\holderexp j}
\leq\sum_{j=0}^k\beta^{k-j}\sigma^j
=\frac{\sigma^{k+1}-\beta^{k+1}}{\sigma-\beta}
\leq\frac{\sigma^{k+1}}{\sigma-\beta}.
}
\end{equation}
{For positive constants $M_U,M_V$ to be chosen,
suppose that $\|U_j\|\leq M_U$ and $\|V_j\|\leq M_V\sigma^j$
for $0\leq j\leq k$.
Iterating \eqref{eq:app-V} and using
\eqref{eq:app-geometric-convolution} yields}
\[
{
\begin{aligned}
\|V_{k+1}\|
&\leq\beta^{k+1}
+L_1\delta^{\holderexp}M_U
\sum_{j=0}^k\beta^{k-j}\beta^{\holderexp j}\\
&\leq\left(1+\frac{L_1\delta^{\holderexp}M_U}
{\sigma-\beta}\right)\sigma^{k+1}.
\end{aligned}
}
\]
{By \eqref{eq:app-first-blocks}, the products of
$1+\|E_j\|$ are bounded by
$\exp(L_1\delta^{\holderexp}/(1-\beta^{\holderexp}))$.
Iterating \eqref{eq:app-U} and summing the terms $R_jV_j$ therefore gives}
\[
{
\|U_{k+1}\|
\leq
\exp\left(\frac{L_1\delta^{\holderexp}}
{1-\beta^{\holderexp}}\right)
\left(1+\frac{\overline R M_V}{1-\sigma}\right).
}
\]
{These two estimates determine the choices needed to
close the induction. Take $M_V:=2$, choose
$M_U>2(1+\overline R M_V/(1-\sigma))$, and then reduce $\delta$ so that}
\begin{equation}\label{eq:app-first-smallness}
{
\frac{L_1\delta^{\holderexp}M_U}{\sigma-\beta}\leq1,
\qquad
\exp\left(\frac{L_1\delta^{\holderexp}}
{1-\beta^{\holderexp}}\right)\leq2.
}
\end{equation}
{The preceding estimates now give
$\|V_{k+1}\|\leq M_V\sigma^{k+1}$ and $\|U_{k+1}\|<M_U$.
The initial bounds thus imply, by induction,}
\begin{equation}\label{eq:app-first-induction}
{
\|U_k\|\leq M_U,\qquad
\|V_k\|\leq M_V\sigma^k,\qquad k\geq0.
}
\end{equation}
{Finally, \eqref{eq:app-U}, \eqref{eq:app-first-blocks},
and \eqref{eq:app-first-induction} give}
\[
{
\|U_{k+1}-U_k\|
\leq(L_1\delta^{\holderexp}M_U+\overline R M_V)\sigma^k.
}
\]
{Hence $U_k$ converges uniformly and $V_k\to0$
uniformly in the initial coordinates.}

\paragraph{{Second derivatives when $r=2$.}}
{Set $X_k:=D_y^2u_k$ and $Y_k:=D_y^2v_k$;
here $X_0=Y_0=0$.
Differentiating \eqref{eq:app-U}--\eqref{eq:app-V} once more gives}
\begin{align}
X_{k+1}
&=(I+E_k)X_k+R_kY_k+S_k^u, \label{eq:app-X}\\
Y_{k+1}
&=Q_kX_k+A_{H,k}Y_k+S_k^v, \label{eq:app-Y}
\end{align}
{where $H_u(u,v):=u+h(u,v)$ and $H_v(u,v):=G(u,v)$
are the components of \eqref{eq:app-coordinate-model}.
For initial-coordinate directions $\xi,\zeta$, the second-order chain
rule produces the source terms}
\[
{
S_k^a[\xi,\zeta]
=D^2H_a(u_k,v_k)
[(U_k\xi,V_k\xi),(U_k\zeta,V_k\zeta)],
\qquad a\in\{u,v\}.
}
\]
{Since $D^2H_u=D^2h$, the bounds
\eqref{eq:app-d2h}--\eqref{eq:app-d2g} control the $uu$, mixed, and $vv$ terms.
Using \eqref{eq:app-orbit-normal}, \eqref{eq:app-first-induction},
$\delta\leq1$, and $\beta^{\holderexp}<\sigma$ gives}
\[
{
\begin{aligned}
\|S_k^a\|
&\leq L_2\bigl(\|v_k\|^{\holderexp}\|U_k\|^2
+2\|U_k\|\|V_k\|+\|V_k\|^2\bigr)\\
&\leq L_2(M_U+M_V)^2\sigma^k.
\end{aligned}
}
\]
{Thus the fixed values
$K_u=K_v:=L_2(M_U+M_V)^2$ give}
\begin{align}
\|S_k^u\|&{\leq K_u\sigma^k,}
\label{eq:app-source-u}\\
\|S_k^v\|&{\leq K_v\sigma^k.}
\label{eq:app-source-v}
\end{align}
{These values remain valid under further reductions of
$\delta$, since $L_2,M_U,M_V$ are already fixed.
For positive $M_X,M_Y$ to be chosen, suppose
$\|X_j\|\leq M_X$ and $\|Y_j\|\leq M_Y\sigma^j$
for $0\leq j\leq k$.
Iterating \eqref{eq:app-Y} and applying
\eqref{eq:app-geometric-convolution} gives}
\[
{
\begin{aligned}
\|Y_{k+1}\|
&\leq L_1\delta^{\holderexp}M_X
\sum_{j=0}^k\beta^{k-j}\beta^{\holderexp j}
+K_v\sum_{j=0}^k\beta^{k-j}\sigma^j\\
&\leq
\frac{L_1\delta^{\holderexp}M_X+K_v}{\sigma-\beta}
\sigma^{k+1}.
\end{aligned}
}
\]
{For \eqref{eq:app-X}, the product bound in
\eqref{eq:app-first-smallness} and the summable terms
$R_jY_j+S_j^u$ give}
\[
{
\|X_{k+1}\|
\leq\frac{2(\overline R M_Y+K_u)}{1-\sigma}.
}
\]
{To close this induction, first choose
$M_Y>2K_v/(\sigma-\beta)$ and then
$M_X>2(\overline R M_Y+K_u)/(1-\sigma)$.
Reduce $\delta$ further so that
$L_1\delta^{\holderexp}M_X/(\sigma-\beta)\leq M_Y/2$.
The preceding estimates give $\|Y_{k+1}\|\leq M_Y\sigma^{k+1}$
and $\|X_{k+1}\|<M_X$, so the zero initial values imply}
\begin{equation}\label{eq:app-second-bootstrap}
{
\sup_{k\geq0}\|X_k\|\leq M_X,\qquad
\|Y_k\|\leq M_Y\sigma^k.
}
\end{equation}
{Now \eqref{eq:app-X}, \eqref{eq:app-first-blocks},
\eqref{eq:app-source-u}, and \eqref{eq:app-second-bootstrap} yield}
\[
{
\|X_{k+1}-X_k\|
\leq(L_1\delta^{\holderexp}M_X+\overline R M_Y+K_u)\sigma^k.
}
\]
{Thus $X_k$ converges uniformly and $Y_k\to0$
uniformly in the initial coordinates.}

\paragraph{{Smoothness of the limiting map.}}
{It remains to identify the derivative limits above with
the derivatives of $\Psi_H$.
Regard $(u_k,v_k)$ as maps of the full initial coordinates $y=(u_0,v_0)$.
Around any point of the final $W_\ell$, choose a compact convex ball
$\mathcal Q\Subset W_\ell$ containing that point in its interior.
By \Cref{lem:contracting-iterate} and the estimates above, these maps
and their derivatives through order $r$ converge uniformly on $\mathcal Q$.
Apply the fundamental theorem of calculus to the coordinate iterates
and, when $r=2$, to their first derivatives.
This identifies the derivative limits with the derivatives of
$(u_\infty,0)$, so this map is jointly $C^r$ in $(u_0,v_0)$.
By definition of the coordinate iterates and
\eqref{eq:app-coordinate-limit}, on $\Theta_\ell(W_\ell)$ we have}
\[
{
H^k=\Theta_\ell\circ(u_k,v_k)\circ\Theta_\ell^{-1},
\qquad
\Psi_H=\Theta_\ell\circ(u_\infty,0)\circ\Theta_\ell^{-1}.
}
\]
{These formulas represent the same ambient maps on chart
overlaps. The chain rule and the fixed chart bounds therefore give
$\Psi_H\in C^r(\mathcal V_0,\E)$ and the locally uniform derivative
convergence in \eqref{eq:app-derivative-convergence}.}
\end{proof}

\subsection{Proof of Lemma~\ref{lem:original-iteration}}
\label{app:proof-original-iteration}

\begin{proof}
{We first recover the intermediate $F$-steps between
successive $H=F^N$ iterations. Write $k=Nn+j$, $0\leq j<N$.
By \eqref{eq:app-forward-domain}, $H^n(y)\in\mathcal V_0$ for
$y\in\mathcal V_0$. The domain assertion in
\Cref{lem:contracting-iterate} therefore gives
$F^k(y)=F^j(H^n(y))$, so all $F$-iterates are defined.
Since $F^j$ fixes $\Psi_H(y)\in\M$, continuity gives}
\[
{
F^{Nn+j}(y)=F^j(H^n(y))\longrightarrow
F^j(\Psi_H(y))=\Psi_H(y).
}
\]
{There are only finitely many residue classes $j$;
hence the full sequence converges to $\Psi_F(y)=\Psi_H(y)$.}

{We next estimate the errors at these intermediate steps.
The finitely many maps $F^j\circ\Theta_\ell$ have continuous derivatives
on the fixed compact coordinate domains from the proof of
\Cref{lem:contracting-iterate}. Thus a common $L\geq1$ satisfies}
\begin{equation}\label{eq:app-finite-iterate-derivative-bound}
{
\|D(F^j\circ\Theta_\ell)(u,v)\|\leq L,
\qquad (u,v)\in\overline{\mathcal O_\ell}\times
\overline{B_{\delta_0}(0)},\quad 0\leq j<N.
}
\end{equation}
{Choose a chart with $y=\Theta_\ell(u_0,v_0)$ and
$(u_0,v_0)\in W_\ell$. By \eqref{eq:app-coordinate-limit}, write
$H^n(y)=\Theta_\ell(u_n,v_n)$ and
$\Psi_H(y)=\Theta_\ell(u_\infty,0)$.
The segment joining these coordinate points lies in the convex cylinder
$\mathcal O_\ell\times B_\delta(0)$.
}
\[
{
\begin{aligned}
\|F^{Nn+j}(y)-\Psi_H(y)\|
&=\left\|\int_0^1 D(F^j\circ\Theta_\ell)
\bigl(u_\infty+t(u_n-u_\infty),t v_n\bigr)
\binom{u_n-u_\infty}{v_n}\,dt\right\|\\
&\overset{\eqref{eq:app-finite-iterate-derivative-bound}}{\leq}
L\bigl(\|u_n-u_\infty\|+\|v_n\|\bigr)\\
&\overset{\eqref{eq:app-coordinate-tail}}{\leq}
L\left(1+\frac{L_h}{1-\beta}\right)\beta^n\|v_0\|\\
&\overset{\eqref{eq:tubular-chart},\,\eqref{eq:tubular-projection}}{=}
L\left(1+\frac{L_h}{1-\beta}\right)
\beta^n\dist(y,\M).
\end{aligned}
}
\]
{Finally, the choice $\beta<\widehat q^{\,N}$ in
\eqref{eq:app-contracting-iterate-choice} converts this bound into
a rate per $F$-step:}
\[
{
\beta^n\leq\widehat q^{\,Nn}
=\widehat q^{-j}\widehat q^{\,k}
\leq\widehat q^{-(N-1)}\widehat q^{\,k}.
}
\]
{This proves \eqref{eq:app-original-rate} with
$C_{\widehat q}=L(1+L_h/(1-\beta))\widehat q^{-(N-1)}$.
Different choices of $N$ and $\widehat q$ give the limit of the same
full sequence $F^k(y)$, so their limiting maps agree wherever their
domains overlap.}
\end{proof}

%% file: appendix_inexact_zero_coupling.tex
\section{A sufficient condition for vanishing coupling in projection-type compositions}
\label[appendix]{app:iap-zero-coupling}

We give a sufficient condition for the normal-to-tangential block to vanish.
It does not require the inexact maps to have the same first-order jet as the
metric projections: instead, at points of $\M$, their derivatives preserve
the individual normal spaces.

\begin{proposition}[Vanishing coupling for projection-type compositions]
\label{prop:iap-zero-coupling}
Suppose that each $\phi_i$ fixes $\M_i$ and that, for $x\in\M$,
\begin{equation}\label{eq:iap-normal-space-invariance}
  D\phi_i(x)N_x\M_i\subseteq N_x\M_i,
  \qquad i=1,2.
\end{equation}
Then $D\phi_i(x)$ preserves $N_x\M$, and the composition
$S=\phi_1\circ\phi_2$ satisfies
\begin{equation}\label{eq:iap-zero-coupling}
B_x^{\rm iAP}=0.
\end{equation}
\end{proposition}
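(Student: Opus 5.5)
The plan is to compute $D\phi_i(x)$ at $x\in\M$ through the orthogonal splitting $\E=T_x\M_i\oplus N_x\M_i$, and then pass to the finer splitting $T_x\M\oplus N_x\M$ by using clean intersection.

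First, since $\phi_i$ fixes $\M_i$ pointwise near $x$, differentiating along curves in $\M_i$ gives $D\phi_i(x)v=v$ for every $v\in T_x\M_i$. Together with \eqref{eq:iap-normal-space-invariance}, this shows that in the splitting $T_x\M_i\oplus N_x\M_i$ the derivative is block diagonal:
\[
D\phi_i(x)=P_{T_x\M_i}+M_iP_{N_x\M_i},\qquad M_i:N_x\M_i\to N_x\M_i .
\]

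Second, we show that $N_x\M$ is invariant under $D\phi_i(x)$. By \eqref{eq:clean-intersection}, $T_x\M\subset T_x\M_i$, and hence $N_x\M_i\subset N_x\M$. Consequently $N_x\M$ decomposes orthogonally as
\[
N_x\M=\bigl(T_x\M_i\cap N_x\M\bigr)\oplus N_x\M_i .
\]
This holds because $P_{T_x\M_i}$ maps $N_x\M$ into itself: $P_{T_x\M_i}$ fixes $T_x\M$, and by self-adjointness it preserves the orthogonal complement $N_x\M$, exactly as in \Cref{lem:exact-linearization}. Take $n\in N_x\M$ and write $n=P_{T_x\M_i}n+P_{N_x\M_i}n$. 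The first summand lies in $T_x\M_i\cap N_x\M$, and $D\phi_i(x)$ fixes it. The second summand lies in $N_x\M_i$, and $D\phi_i(x)$ maps it into $N_x\M_i\subset N_x\M$. Hence $D\phi_i(x)n\in N_x\M$.

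Finally, $x\in\M$ is fixed by both maps, so the chain rule gives $DS(x)=D\phi_1(x)D\phi_2(x)$. Each factor is the identity on $T_x\M$, because $T_x\M\subset T_x\M_i$, and each factor preserves $N_x\M$ by the previous step. Therefore $DS(x)$ maps $N_x\M$ into $N_x\M$. Its tangent component $B_x^{\rm iAP}=P_{T_x\M}DS(x)|_{N_x\M}$ therefore vanishes, which is \eqref{eq:iap-zero-coupling}.

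The only step requiring care is the decomposition of $N_x\M$ in the second step. It is also the only place where the clean intersection hypothesis is used, namely through the inclusion $N_x\M_i\subset N_x\M$.
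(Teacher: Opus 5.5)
Your proof is correct and follows essentially the same route as the paper: you split $n\in N_x\M$ along $T_x\M_i\oplus N_x\M_i$, note that the tangential piece is fixed and lies in $T_x\M_i\cap N_x\M$ while the normal piece stays in $N_x\M_i\subseteq N_x\M$, and conclude that $DS(x)$ preserves $N_x\M$. One small correction: the inclusion $T_x\M\subseteq T_x\M_i$, and hence $N_x\M_i\subseteq N_x\M$, follows from $\M\subseteq\M_i$ alone (as the paper argues), so clean intersection is not actually used anywhere in this argument.
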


\begin{proof}
Since $\phi_i|_{\M_i}=\Id$, its derivative is the identity on
$T_x\M_i$. Let $n\in N_x\M$ and write $n=t_i+n_i$ for its orthogonal
decomposition relative to $T_x\M_i\oplus N_x\M_i$. Because
$\M\subseteq\M_i$, we have $T_x\M\subseteq T_x\M_i$ and hence
$N_x\M_i\subseteq N_x\M$. Thus $n_i\in N_x\M$, and the assumed
invariance {in \eqref{eq:iap-normal-space-invariance}} gives $D\phi_i(x)n_i\in N_x\M_i\subseteq N_x\M$.
Also, $t_i=P_{T_x\M_i}n$ is orthogonal to $T_x\M$, so
$t_i\in T_x\M_i\cap N_x\M$. Both terms in
$D\phi_i(x)n=t_i+D\phi_i(x)n_i$ therefore belong to $N_x\M$.
{By the block definition \eqref{eq:normal-tangent-blocks},
the composition's invariance of $N_x\M$ gives \eqref{eq:iap-zero-coupling}.}
\end{proof}

%% file: appendix_gap_analysis.tex
\section{Principal-angle analysis of relaxed and generalized alternating projections}\label[appendix]{app:gap-analysis}

This appendix gives the spectral ingredients used in \Cref{sec:gap} in the
normal-block notation of the main text. Relaxed alternating projections have
an established literature; see, for example, \cite{CegielskiSuchocka2008}.
The principal-angle spectral analysis of generalized alternating projections
(GAP) for two subspaces appears in \cite{FaltGiselsson2017}, and the local
manifold analysis in \cite{FaltGiselsson2023}. We cite the principal-angle
spectrum and prove the uniform normal stability and exact positive
outer-relaxation interval needed here.

Throughout, we retain the standing hypotheses of \Cref{sec:gap}: $\Mone$,
$\Mtwo$, and their intersection $\M$ are $C^{r+1,\holderexp}$, the intersection
is clean on a relative neighborhood of compact $K\subset\M$, and
both local metric projections are single-valued with the stated regularity on
a common ambient neighborhood.

\subsection{Principal-angle spectrum and inner stability}

Fix $x\in\M$ and abbreviate
\[
U=T_x\Mone,\qquad V=T_x\Mtwo,\qquad W=U\cap V=T_x\M.
\]
We use the principal angles of the reduced spaces from
\Cref{def:friedrichs-principal}.

\begin{lemma}[Principal-angle spectrum of the two-parameter relaxed AP map]
\label[lemma]{prop:gap-block}
On a principal two-plane associated with a nontrivial principal angle in
$W^\perp$, choose an orthonormal basis such that
\[
P_U=\begin{pmatrix}1&0\\0&0\end{pmatrix},
\qquad
P_V=\begin{pmatrix}c^2&cs\\cs&s^2\end{pmatrix},
\qquad
c=\cos\theta,\quad s=\sin\theta,\quad
0<\theta\leq\frac{\pi}{2}.
\]
For the projection order
$R_{\alpha_1}^{(1)}\circ R_{\alpha_2}^{(2)}$, the corresponding normal block is
\begin{equation}\label{eq:gap-principal-block}
A_\theta(\alpha_1,\alpha_2)
=
\begin{pmatrix}
1-\alpha_2s^2&\alpha_2cs\\
(1-\alpha_1)\alpha_2cs&(1-\alpha_1)(1-\alpha_2c^2)
\end{pmatrix}.
\end{equation}
Its characteristic polynomial is
\begin{equation}\label{eq:gap-characteristic-polynomial}
z^2-\tau_\theta z+\delta,
\qquad
\tau_\theta=2-\alpha_1-\alpha_2+\alpha_1\alpha_2c^2,
\qquad
\delta=(1-\alpha_1)(1-\alpha_2),
\end{equation}
and hence
\begin{equation}\label{eq:gap-principal-eigenvalues}
\lambda_{\theta,\pm}
=\frac{\tau_\theta\pm\sqrt{\tau_\theta^2-4\delta}}{2},
\end{equation}
where the square root may be complex. The remaining normal components have
eigenvalues
\begin{equation}\label{eq:gap-transverse-eigenvalues}
1-\alpha_2\quad\text{on }U\cap V^\perp,
\qquad
1-\alpha_1\quad\text{on }U^\perp\cap V,
\end{equation}
and
\begin{equation}\label{eq:gap-complement-eigenvalue}
(1-\alpha_1)(1-\alpha_2)
\quad\text{on }(U+V)^\perp.
\end{equation}
The common component $W$ is the tangential identity block.
\end{lemma}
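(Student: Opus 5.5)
The plan is to reduce everything to linear algebra at the fixed point $x$. By \Cref{prop:gap-linearization}, the normal block is $L_{1,x}L_{2,x}|_{N_x\M}$ with $L_{i,x}=(1-\alpha_i)I+\alpha_iP_{T_x\M_i}$. The orthogonal decomposition of $N_x\M=W^\perp$ into pieces jointly invariant under $P_U$ and $P_V$ is then invoked: the principal two-planes for angles $0<\theta<\pi/2$, together with $U\cap V^\perp$, $U^\perp\cap V$ and $(U+V)^\perp$. This is the standard canonical form for a pair of projectors: Jordan/Halmos two-subspace theory, or the principal-vector construction of \Cref{def:friedrichs-principal} and \cite{BauschkeBelloCruzNghiaPhanWang2016}. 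Both $L_{1,x}$ and $L_{2,x}$ preserve each piece, so the spectrum of the normal block is the union of the spectra of its restrictions. Hence it suffices to compute on each piece separately.

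On a principal two-plane, take an orthonormal basis $e_1\in U_0$ and $e_2\perp e_1$ with $P_Ve_1=ce_1+se_2$ up to orientation. This yields the stated matrices for $P_U$ and $P_V$. Next, compute $L_2=I-\alpha_2(I-P_V)$ explicitly, with entries $1-\alpha_2s^2$, $\alpha_2cs$, $\alpha_2cs$, $1-\alpha_2c^2$, and $L_1=\mathrm{diag}(1,1-\alpha_1)$. Multiplying gives \eqref{eq:gap-principal-block} directly.

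For the characteristic polynomial, use $\det L_1L_2=\det L_1\det L_2$. Here $\det L_1=1-\alpha_1$ and $\det L_2=(1-\alpha_2)$, since $L_2$ has eigenvalues $1$ on $\operatorname{ran}P_V$ and $1-\alpha_2$ on its complement. Thus the determinant is $\delta$. The trace is $1-\alpha_2s^2+(1-\alpha_1)(1-\alpha_2c^2)$, and substituting $s^2=1-c^2$ simplifies it to $\tau_\theta$. The quadratic formula then gives \eqref{eq:gap-principal-eigenvalues}.

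The remaining pieces are one-dimensional-type and direct. On $U\cap V^\perp$, $P_V=0$ and $P_U=I$, so $L_2=(1-\alpha_2)I$ and $L_1=I$. On $U^\perp\cap V$ the roles reverse, giving $1-\alpha_1$. On $(U+V)^\perp$ both projectors vanish, giving the product $(1-\alpha_1)(1-\alpha_2)$. The case $\theta=\pi/2$ within a "two-plane" coincides with the transverse pieces, since $c=0$ gives eigenvalues $1-\alpha_2$ and $1-\alpha_1$ consistently. Finally, on $W$ both projectors are the identity, which yields the identity block.

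The only real obstacle is justifying the invariant orthogonal decomposition of $W^\perp$ with the claimed $2\times2$ normal forms. In particular, one must check that the principal vectors $u_j\in U_0$ and $v_j\in V_0$ for angles in $(0,\pi/2)$ span mutually orthogonal planes invariant under both projectors, and that their orthogonal complement in $W^\perp$ splits as the three stated spaces. I would cite the two-subspace canonical form rather than reprove it. The remaining work is the routine computations above.
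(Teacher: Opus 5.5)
Your proposal is correct. It also supplies a derivation that the paper itself omits: the paper does not prove this lemma, but cites \cite{FaltGiselsson2017,FaltGiselsson2023} for the principal-angle spectrum. It then only remarks that reversing the two relaxed projections changes the block but not its characteristic polynomial.

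Your route is to block-diagonalize $W^\perp$ using the two-subspace canonical form and compute $L_{1,x}L_{2,x}$ on each piece. This is the computation behind those references, and your arithmetic checks out:
\begin{itemize}
\item the product reproduces \eqref{eq:gap-principal-block};
\item $\det L_1\det L_2=(1-\alpha_1)(1-\alpha_2)=\delta$;
\item the trace reduces to $\tau_\theta$ via $s^2=1-c^2$;
\item the one-dimensional pieces give \eqref{eq:gap-transverse-eigenvalues}--\eqref{eq:gap-complement-eigenvalue}.
\end{itemize}
What your version buys is self-containment modulo the canonical decomposition. It also contains the useful check that a $\theta=\pi/2$ ``two-plane'' is spanned by a vector of $U\cap V^\perp$ and one of $U^\perp\cap V$. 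So including $\theta=\pi/2$ in the statement is consistent with the transverse list.

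There is one small slip. With the stated matrices, $P_Ve_1=c\,(ce_1+se_2)$, not $ce_1+se_2$. The correct point is that the unit vector $v=ce_1+se_2=P_Ve_1/c$ spans the range of $P_V$ on the plane, and that $P_Ue_2=0$. The latter follows from $P_Uv=c\,e_1$ and $e_2=(v-ce_1)/s$.

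If you prefer not to cite Halmos, the finite-dimensional decomposition is short:
\begin{enumerate}
\item Take orthonormal eigenvectors $u_j$ of $P_UP_VP_U|_{U\ominus W}$ with eigenvalues $c_j^2\in(0,1)$, and set $v_j=P_Vu_j/c_j$.
\item Check that the planes $\operatorname{span}\{u_j,v_j\}$ are mutually orthogonal and invariant under both projectors.
\item Note that the eigenvalue-$0$ eigenvectors span exactly $U\cap V^\perp$, and the remaining part of $W^\perp$ splits as $(U^\perp\cap V)\oplus(U+V)^\perp$.
\end{enumerate}
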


{The eigenvalues in \eqref{eq:gap-principal-eigenvalues},
\eqref{eq:gap-transverse-eigenvalues}, and
\eqref{eq:gap-complement-eigenvalue} form} the principal-angle spectrum of the two-parameter relaxed AP map in
our projection order and tangent--normal convention; see
\cite{FaltGiselsson2017,FaltGiselsson2023}. Reversing the two relaxed
projection operators changes the {matrix block
\eqref{eq:gap-principal-block}}, but not its
characteristic polynomial {\eqref{eq:gap-characteristic-polynomial}}. The spectral conclusions below are therefore
unchanged.

\begin{lemma}[Uniform normal stability of the two-parameter relaxed AP map]
\label[lemma]{thm:gap-inner-stability}
Suppose
\begin{equation}\label{eq:gap-inner-parameter-range}
0<\alpha_1<2,\qquad 0<\alpha_2<2.
\end{equation}
Then every eigenvalue of the normal block lies strictly inside the unit disk.
Consequently,
\begin{equation}\label{eq:gap-core-stability}
\spr(A_x^{\rm core})<1,
\qquad
\rho_{\rm core}:=\sup_{x\in K}\spr(A_x^{\rm core})<1.
\end{equation}
\end{lemma}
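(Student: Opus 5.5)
By \Cref{prop:gap-block}, the normal block $A_x^{\rm core}$ splits into invariant pieces: the principal $2\times2$ blocks $A_\theta(\alpha_1,\alpha_2)$, the scalar components \eqref{eq:gap-transverse-eigenvalues}, and the scalar \eqref{eq:gap-complement-eigenvalue}. The plan is to bound every eigenvalue strictly inside the unit disk, uniformly over $x\in K$, and then pass to the supremum by compactness.

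\emph{Scalar eigenvalues.} With $u_i:=1-\alpha_i$, the range \eqref{eq:gap-inner-parameter-range} gives $|u_i|<1$. Hence $|1-\alpha_1|<1$, $|1-\alpha_2|<1$, and $|\delta|=|u_1u_2|<1$. These bounds do not depend on $x$.

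\emph{Principal blocks.} The characteristic polynomial \eqref{eq:gap-characteristic-polynomial} can be rewritten as
\[
z^2-\tau_\theta z+\delta,\qquad \tau_\theta=u_1+u_2+\alpha_1\alpha_2c^2 .
\]
The Schur--Cohn (Jury) criterion states that both roots of $z^2-\tau z+\delta$ lie in the open unit disk if and only if $|\delta|<1$ and $|\tau|<1+\delta$. The first condition already holds. For the second, note that $c^2\in[0,1)$ on nontrivial principal planes and that $\tau_\theta$ is affine in $c^2$. It therefore suffices to check the two endpoint values $c^2\in\{0,1\}$, where $1+\delta-|\tau|\ge0$, and then argue strictness.

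At $c^2=0$ we have $\tau=u_1+u_2$. Then $1+\delta\mp\tau=(1\mp u_1)(1\mp u_2)>0$.

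At $c^2=1$ we have $\tau=1+\delta$, using $u_1+u_2+(1-u_1)(1-u_2)=1+u_1u_2$. This gives equality on the upper side, $1+\delta-\tau=0$, while the lower side is $1+\delta+\tau=2(1+\delta)>0$.

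For $c^2<1$, the upper quantity is $1+\delta-\tau_\theta=\alpha_1\alpha_2(1-c^2)>0$, because $\alpha_1\alpha_2>0$. The lower quantity $1+\delta+\tau_\theta$ is a convex combination of positive endpoint values, so it is also positive. Thus every principal block is strictly stable.

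\emph{Uniformity.} The main obstacle is the degeneracy at $c\to1$, where an eigenvalue tends to $1$. Clean intersection gives \eqref{eq:friedrichs-uniform-bound}, so $c^2\le\bar c_F^2<1$ for all principal cosines at every $x\in K$. On the compact set $\{(\alpha_1,\alpha_2)\}\times[0,\bar c_F^2]$, the roots depend continuously on $c^2$ and have modulus strictly less than $1$. Hence their maximum modulus is some $\rho_0<1$.

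Combining this with the $x$-independent scalar bounds yields $\spr(A_x^{\rm core})\le\max\{\rho_0,|u_1|,|u_2|,|\delta|\}<1$ for every $x\in K$, which is \eqref{eq:gap-core-stability}.
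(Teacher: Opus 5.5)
Your proposal is correct and follows the paper's route: it applies the Jury--Schur criterion to each principal $2\times2$ block, with the same key identity $1+\delta-\tau_\theta=\alpha_1\alpha_2 s^2>0$, bounds the scalar eigenvalues $1-\alpha_i$ and $\delta$ directly, and closes with a compactness argument for uniformity. The differences are minor: you obtain $1+\delta+\tau_\theta>0$ by affine interpolation between $c^2=0$ and $c^2=1$ rather than by the paper's explicit factorization, and you get uniformity from $\bar c_F<1$ and compactness of $[0,\bar c_F^2]$, which yields an explicit $x$-independent bound, whereas the paper uses continuity of $x\mapsto\spr(A_x^{\rm core})$ on compact $K$.
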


\begin{proof}
For a principal block, the classical Jury--Schur criterion for the real monic
quadratic $z^2-\tau_\theta z+\delta$ reduces to
\begin{equation}\label{eq:gap-jury-schur}
1-\tau_\theta+\delta>0,\qquad
1+\tau_\theta+\delta>0,\qquad
1-\delta>0.
\end{equation}
{By \eqref{eq:gap-characteristic-polynomial}},
\begin{align*}
1-\tau_\theta+\delta
&=\alpha_1\alpha_2s^2>0,\\
1+\tau_\theta+\delta
&=4-2\alpha_1-2\alpha_2+\alpha_1\alpha_2(1+c^2)\\
&=4\bigl((1-\alpha_1/2)(1-\alpha_2/2)
+c^2(\alpha_1/2)(\alpha_2/2)\bigr)>0.
\end{align*}
Moreover, $1-\alpha_i\in(-1,1)$, so
$\delta=(1-\alpha_1)(1-\alpha_2)<1$. Thus {the inequalities in
\eqref{eq:gap-jury-schur} hold, and} every principal-block eigenvalue
lies in the open unit disk. The remaining eigenvalues listed in
{\eqref{eq:gap-transverse-eigenvalues} and
\eqref{eq:gap-complement-eigenvalue}} have modulus strictly below one as well.

Finally, $x\mapsto A_x^{\rm core}$ and the finite-dimensional spectral radius
are continuous. Compactness of $K$ makes the strict spectral-radius bound
uniform on $K$.
\end{proof}

\begin{remark}
The range $0<\alpha_1,\alpha_2<2$ {in
\eqref{eq:gap-inner-parameter-range}} gives a sufficient uniform spectral-stability
region; we do not claim it is necessary or parameter-optimal. If either inner
relaxation parameter exceeds $1$, the principal eigenvalues {in \eqref{eq:gap-principal-eigenvalues}} may be complex.
A real-interval Chebyshev design then requires additional spectral information.
\end{remark}

\begin{corollary}[Retraction induced by the two-parameter relaxed AP map]
\label[corollary]{cor:gap-retraction}
Under the standing geometric hypotheses and the hypotheses of
\Cref{thm:gap-inner-stability}, there is a neighborhood $V$ of $K$ on which
\[
y_{k+1}=S_{\alpha_1,\alpha_2}(y_k)
\]
converges to a $C^r$ limiting map
\[
\Psi_{\rm core}(y):=\lim_{k\to\infty}S_{\alpha_1,\alpha_2}^k(y).
\]
For every $\widehat q\in(\rho_{\rm core},1)$, after shrinking $V$ if
necessary,
\[
\|S_{\alpha_1,\alpha_2}^k(y)-\Psi_{\rm core}(y)\|
\leq C_{\widehat q}\widehat q^{\,k}\dist(y,\M).
\]
The map
\[
\Retr_x^{\rm core}(\eta):=\Psi_{\rm core}(x+\eta)
\]
is a local retraction. {Since \eqref{eq:gap-core-linearization}
gives $B_x^{\rm core}=0$, when $r=2$ the retraction is second-order and,
for every compact}
$K_0\subset\M\cap V$, there are $C,\varepsilon>0$ such that
\[
\|\Psi_{\rm core}(x+\eta)-P_\M(x+\eta)\|
\leq C\|\eta\|^4
\]
whenever $x\in K_0$, $\eta\in T_x\M$, and $\|\eta\|<\varepsilon$.
Thus the limiting map has fourth-order agreement with the projection
retraction for tangent perturbations.
\end{corollary}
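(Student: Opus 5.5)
The corollary follows by applying \Cref{thm:limiting-map} to $F=S_{\alpha_1,\alpha_2}$ and then \Cref{thm:second-order} with the coupling supplied by \Cref{prop:gap-linearization}.

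\textbf{Structural hypotheses.} First I verify \Cref{ass:geometry} for $S=S_{\alpha_1,\alpha_2}$.
\begin{enumerate}[label=\textup{(\alph*)},leftmargin=*]
\item By the standing hypotheses, $P_{\M_1}$ and $P_{\M_2}$ are single-valued and $C^{r,\holderexp}$ on a common ambient neighborhood of the relative neighborhood $\mathcal O$ of $K$.
\item Hence each relaxed map $R^{(i)}_{\alpha_i}$ is $C^{r,\holderexp}$ there.
\item Each $R^{(i)}_{\alpha_i}$ fixes $\M$ pointwise, since $P_{\M_i}(x)=x$ for $x\in\M\subset\M_i$.
\item After shrinking to an ambient neighborhood $U$ of $K$ with $R^{(2)}_{\alpha_2}(U)$ inside the domain of $R^{(1)}_{\alpha_1}$, which is possible by continuity because $R^{(2)}_{\alpha_2}$ fixes $K$, the composition $S$ is $C^{r,\holderexp}$ on $U$ and fixes $\M\cap U$.
\end{enumerate}
\Cref{prop:gap-linearization} then identifies the blocks: $B^{\rm core}_x=0$ and $A_x=A^{\rm core}_x$. \Cref{thm:gap-inner-stability} gives uniform spectral stability, $\rho_{\rm core}<1$.

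\textbf{Convergence, rate, and retraction.} \Cref{thm:limiting-map} now applies with $F=S$ and $\rho_*=\rho_{\rm core}$.
\begin{enumerate}[label=\textup{(\alph*)},leftmargin=*]
\item Part (i) gives a neighborhood $V$ of $K$ on which the iterates converge.
\item Part (ii) shows that the limit $\Psi_{\rm core}$ is $C^r$ on $V$, with $D\Psi_{\rm core}(x)$ given by \eqref{eq:limiting-map-derivative} and $C_x=B^{\rm core}_x(I-A^{\rm core}_x)^{-1}=0$.
\item Part (iii) gives the R-linear bound for each $\widehat q\in(\rho_{\rm core},1)$, after shrinking $V$.
\end{enumerate}
The local retraction property follows from the remark after \Cref{rem:phase-stable-fibers}. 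The map $\Retr^{\rm core}$ is $C^r$ near the zero section as a composition with $\Psi_{\rm core}$. It satisfies $\Retr_x(0)=x$ because $\Psi_{\rm core}$ fixes $\M$, and $D\Retr_x(0)=I_{T_x\M}$ because $D\Psi_{\rm core}(x)$ restricted to $T_x\M$ is the identity.

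\textbf{Higher order when $r=2$.} \Cref{prop:gap-linearization} holds at every $x\in\mathcal O$, so $B^{\rm core}_z=0$ for all $z\in\M\cap V$ once $V$ is shrunk so that $\M\cap V\subset\mathcal O$. This is the neighborhood-wide hypothesis of \Cref{thm:second-order}(ii).
\begin{enumerate}[label=\textup{(\alph*)},leftmargin=*]
\item \Cref{thm:second-order}(ii) gives the $O(\|\eta\|^4)$ agreement with $P_\M$, uniformly on compact $K_0\subset\M\cap V$.
\item Part (i) of the same theorem, via $B_x=0\Rightarrow C_x=0$, gives the second-order property.
\end{enumerate}

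\textbf{Main obstacle.} The only step beyond citation is keeping the neighborhoods compatible. $V$ must lie inside the well-definedness domain of the composition. $\M\cap V$ must lie inside $\mathcal O$, where cleanness and hence $B^{\rm core}=0$ hold. The projection points $P_\M(x+\eta)$ must stay in $\M\cap V$. All three are handled by successive shrinking around the compact set $K$.
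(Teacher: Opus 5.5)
Your proposal is correct and matches the paper's proof. The paper applies \Cref{thm:main-polynomial} with the identity polynomial $p(\lambda)=\lambda$, which is the same as applying \Cref{thm:limiting-map} directly to $F=S_{\alpha_1,\alpha_2}$ as you do, using \Cref{prop:gap-linearization,thm:gap-inner-stability}, and then invokes \Cref{thm:second-order} through the vanishing coupling. One small correction: $B^{\rm core}_x=0$ follows from $T_x\M\subset T_x\M_i$ and the self-adjointness of $P_{T_x\M_i}$, not from cleanness; cleanness is what the spectral-stability lemma needs.
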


\begin{proof}
Apply {\Cref{thm:main-polynomial,thm:limiting-map}} to
$S=S_{\alpha_1,\alpha_2}$ and the identity polynomial
$p(\lambda)=\lambda$, using
\Cref{prop:gap-linearization,thm:gap-inner-stability}. {The vanishing coupling obtained from
\eqref{eq:gap-core-linearization}} then permits application of
{\Cref{thm:second-order}}.
\end{proof}

\subsection{Outer relaxation}

{The full GAP map and its affine polynomial
$p_\beta(\lambda)=1-\beta+\beta\lambda$ are defined in
\eqref{eq:gap-outer-map}. Since $p_\beta(1)=1$, outer relaxation
preserves the fixed-point manifold.}
Three-parameter GAP and its
spectral rate optimization for two subspaces were studied in
\cite{FaltGiselsson2017}. Here the inner parameters are fixed, and we need the
exact positive outer-relaxation interval for the compact normal spectral
family arising along $K$.

Let
\begin{equation}\label{eq:gap-core-spectrum}
\Sigma_{\rm core}:=\bigcup_{x\in K}\spec(A_x^{\rm core})
\end{equation}
and define
\begin{equation}\label{eq:gap-outer-threshold}
\beta_{\max}:=
\inf_{\lambda\in\Sigma_{\rm core}}
\frac{2(1-\operatorname{Re}\lambda)}{|1-\lambda|^2}.
\end{equation}
We use the convention $\inf\varnothing=+\infty$; if the normal spaces have
dimension zero on $K$, every stability conclusion below is vacuous.

\begin{theorem}[Exact Schur-stability interval for positive outer relaxation]
\label{thm:gap-outer-stability}
Under the hypotheses of \Cref{thm:gap-inner-stability},
$\Sigma_{\rm core}$ {from \eqref{eq:gap-core-spectrum}} is a compact subset of the open unit disk and
\[
\beta_{\max}>1.
\]
For $\beta>0$, the transformed normal-block family
$p_\beta(A_x^{\rm core})$ is uniformly spectrally stable if and only if
\begin{equation}\label{eq:gap-outer-stability-interval}
0<\beta<\beta_{\max}.
\end{equation}
Here exactness refers to positive scalar outer relaxation for fixed inner
parameters $\alpha_1,\alpha_2$ and the compact normal-block family over $K$;
it does not assert joint parameter optimality, a fastest convergence rate, or
an operator-norm optimum.
\end{theorem}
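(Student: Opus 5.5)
The proof will rest on spectral mapping, $\spec(p_\beta(A_x^{\rm core}))=\{1-\beta+\beta\lambda:\lambda\in\spec(A_x^{\rm core})\}$, which reduces everything to the scalar family over $\Sigma_{\rm core}$.

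\emph{Compactness of $\Sigma_{\rm core}$ and its position in the disk.} By \Cref{thm:gap-inner-stability}, each $\spec(A_x^{\rm core})$ lies in the open unit disk, and $\rho_{\rm core}<1$ gives $\Sigma_{\rm core}\subset\{|\lambda|\le\rho_{\rm core}\}$. For closedness, take $\lambda_n\in\spec(A_{x_n}^{\rm core})$ with $\lambda_n\to\lambda$. Pass to a subsequence with $x_n\to x\in K$. In a local bundle trivialization, the characteristic polynomials converge coefficientwise, so $\det(\lambda I-A_x^{\rm core})=0$. Hence $\Sigma_{\rm core}$ is compact and lies in $\{|\lambda|\le\rho_{\rm core}<1\}$.

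\emph{Scalar criterion.} For a single $\lambda$ with $|\lambda|<1$ and $\beta>0$, expand
\[
|1-\beta(1-\lambda)|^2=1-2\beta(1-\operatorname{Re}\lambda)+\beta^2|1-\lambda|^2 .
\]
This is $<1$ iff $\beta<g(\lambda):=2(1-\operatorname{Re}\lambda)/|1-\lambda|^2$; note $\lambda\ne1$. Writing $\lambda=a+ib$, we have
\[
g(\lambda)-1=\frac{1-a^2-b^2}{|1-\lambda|^2}=\frac{1-|\lambda|^2}{|1-\lambda|^2}.
\]
So $g$ is continuous on the closed disk of radius $\rho_{\rm core}$ and satisfies $g\ge 1+(1-\rho_{\rm core}^2)/4>1$ there, using $|1-\lambda|\le2$. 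By compactness the infimum $\beta_{\max}$ is attained when $\Sigma_{\rm core}\ne\varnothing$, and $\beta_{\max}>1$. (The empty case is vacuous.)

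\emph{The equivalence.} The uniform spectral radius is
\[
\rho_{\rm out}(\beta)=\sup_{\lambda\in\Sigma_{\rm core}}|1-\beta+\beta\lambda|,
\]
which is a maximum of a continuous function over a compact set.
\begin{itemize}
\item If $0<\beta<\beta_{\max}$, then $\beta<g(\lambda)$ for every $\lambda\in\Sigma_{\rm core}$. Hence each modulus is $<1$, and since the maximum is attained, $\rho_{\rm out}<1$.
\item Conversely, if $\beta\ge\beta_{\max}$, pick $\lambda_*$ attaining the infimum. Then $\beta\ge g(\lambda_*)$, so $|1-\beta+\beta\lambda_*|\ge1$, and uniform stability fails.
\end{itemize}

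The main point needing care is attainment in both directions, which ensures the interval is exactly open at $\beta_{\max}$. It rests entirely on the compactness of $\Sigma_{\rm core}$; the continuity-of-spectrum argument above handles this, and the rest is elementary.
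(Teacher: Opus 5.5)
Your proposal is correct and follows the paper's proof essentially step for step: compactness of $\Sigma_{\rm core}$ inside the closed disk $|\lambda|\le\rho_{\rm core}<1$, the expansion of $|1-\beta+\beta\lambda|^2$ together with the identity $g(\lambda)-1=(1-|\lambda|^2)/|1-\lambda|^2$, attainment of the infimum defining $\beta_{\max}$, and spectral mapping to obtain the equivalence in both directions. Your sequential closedness argument via characteristic polynomials and the bound $g\ge 1+(1-\rho_{\rm core}^2)/4$ only make explicit what the paper summarizes as continuity of the bundle endomorphism over compact $K$.
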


\begin{proof}
If $\Sigma_{\rm core}=\varnothing$, the normal bundle has rank zero over $K$
and the stability statement is vacuous. Otherwise, continuity of the bundle
endomorphism over compact $K$ makes $\Sigma_{\rm core}$ compact. By
{\eqref{eq:gap-core-stability}}, it is contained in a closed disk
$|\lambda|\leq\rho_{\rm core}<1$.

For fixed $\lambda\neq1$ and $\beta>0$,
\[
|1-\beta+\beta\lambda|^2
=1+2\beta(\operatorname{Re}\lambda-1)
+\beta^2|1-\lambda|^2.
\]
Hence
\begin{equation}\label{eq:gap-scalar-outer-stability}
|1-\beta+\beta\lambda|<1
\quad\Longleftrightarrow\quad
0<\beta<\frac{2(1-\operatorname{Re}\lambda)}{|1-\lambda|^2}.
\end{equation}
The threshold function is continuous on the compact set
$\Sigma_{\rm core}$, and
\[
\frac{2(1-\operatorname{Re}\lambda)}{|1-\lambda|^2}-1
=\frac{1-|\lambda|^2}{|1-\lambda|^2}>0.
\]
Thus its minimum {in \eqref{eq:gap-outer-threshold}} is attained and $\beta_{\max}>1$. The spectral mapping theorem gives
\[
\spec(p_\beta(A_x^{\rm core}))
=p_\beta(\spec(A_x^{\rm core})),
\]
so the scalar equivalence {\eqref{eq:gap-scalar-outer-stability}} gives uniform spectral stability exactly for
$0<\beta<\beta_{\max}$. At $\beta=\beta_{\max}$, an eigenvalue attaining the
minimum maps to the unit circle, and for $\beta>\beta_{\max}$ the same
eigenvalue maps outside it.
\end{proof}

In particular, $\beta_{\max}>1$ implies that every convex-combination outer
relaxation $0<\beta\leq1$ is uniformly spectrally stable in the normal block.

\begin{corollary}[Retraction induced by the generalized alternating projection map]
\label[corollary]{cor:outer-gap-retraction}
Under the standing geometric and local-projection hypotheses of this appendix,
if
\[
0<\alpha_1,\alpha_2<2,
\qquad
0<\beta<\beta_{\max},
\]
then there is a neighborhood $V$ of $K$ on which
$G_{\alpha_1,\alpha_2,\beta}^k(y)$ converges to a $C^r$ limiting map
$\Psi_{\rm out}(y)$. With
\[
\rho_{\rm out}:=
\sup_{x\in K}\spr(p_\beta(A_x^{\rm core}))<1,
\]
every $\widehat q\in(\rho_{\rm out},1)$ is, after shrinking $V$ if necessary,
an admissible local R-linear convergence factor. The limiting map induces a
local retraction. Since {\eqref{eq:gap-core-linearization}} gives
$B_x^{\rm core}=0$ and a consistent polynomial transformation preserves the
identity $B_x=0$, {when $r=2$ the induced retraction is
second-order and, for every compact} $K_0\subset\M\cap V$, there are $C,\varepsilon>0$ such that
\[
\|\Psi_{\rm out}(x+\eta)-P_\M(x+\eta)\|
\leq C\|\eta\|^4
\]
whenever $x\in K_0$, $\eta\in T_x\M$, and $\|\eta\|<\varepsilon$. Thus the
full GAP map has fourth-order agreement with the projection retraction for
tangent perturbations.
\end{corollary}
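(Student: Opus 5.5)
The statement is Corollary~\ref{cor:outer-gap-retraction}. The plan is to treat $G_{\alpha_1,\alpha_2,\beta}=(1-\beta)\Id+\beta S_{\alpha_1,\alpha_2}$ as the degree-one affine-combination realization of the consistent polynomial $p_\beta(\lambda)=1-\beta+\beta\lambda$ for the base map $S=S_{\alpha_1,\alpha_2}$. The conclusions then come from \Cref{thm:main-polynomial} and \Cref{thm:second-order}.

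The first step checks the hypotheses of those results.

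1. \Cref{prop:gap-linearization} shows that $S$ fixes $\M$ on the relative neighborhood $\mathcal O$. Under the standing hypotheses $S$ is $C^{r,\holderexp}$ on a common ambient neighborhood of $\mathcal O$, because it is a composition of affine combinations of the identity with $C^{r,\holderexp}$ metric projections. Hence \Cref{ass:geometry} holds.
2. $G_{\alpha_1,\alpha_2,\beta}$ is an affine combination of $\Id$ and $S$, so it is $C^{r,\holderexp}$ on the same domain. This gives \Cref{ass:well-defined}.
3. By \Cref{lem:poly-linearization}, $G_{\alpha_1,\alpha_2,\beta}$ satisfies \eqref{eq:first-order-polynomial-realization}. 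Its normal block is $p_\beta(A_x^{\rm core})$, and its coupling block is $B_x^{\rm core}q_{p_\beta}(A_x^{\rm core})$.

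The second step supplies transformed stability. For $0<\beta<\beta_{\max}$, \Cref{thm:gap-outer-stability} gives $\rho_{\rm out}=\sup_{x\in K}\spr(p_\beta(A_x^{\rm core}))<1$. Its hypotheses are $0<\alpha_1,\alpha_2<2$, which are those of \Cref{thm:gap-inner-stability}. This is exactly \eqref{eq:transformed-normal-stability}. \Cref{thm:main-polynomial} then yields:

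1. an open neighborhood $V$ of $K$ on which the GAP iterates converge to a $C^r$ limiting map $\Psi_{\rm out}$;
2. the rate \eqref{eq:polynomial-linear-rate} for every $\widehat q\in(\rho_{\rm out},1)$, after shrinking $V$ depending on $\widehat q$.

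The remark following \Cref{thm:limiting-map} shows that the induced map $\Retr_x(\eta)=\Psi_{\rm out}(x+\eta)$ satisfies \eqref{eq:retraction-conditions}, so it is a local retraction.

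The third step handles the case $r=2$. \Cref{prop:gap-linearization} gives $B_z^{\rm core}=0$ at every $z\in\mathcal O$. Shrink $V$ so that $\M\cap V\subset\mathcal O$. Then \eqref{eq:polynomial-coupling-block} gives $B_{p_\beta,z}=0$ for all $z\in\M\cap V$; in this degree-one case $q_{p_\beta}\equiv\beta$. Applying \Cref{thm:second-order}\textup{(ii)} to $F=G_{\alpha_1,\alpha_2,\beta}$ gives the second-order property and the bound $\|\Psi_{\rm out}(x+\eta)-P_\M(x+\eta)\|\le C\|\eta\|^4$, uniformly on compact $K_0\subset\M\cap V$.

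I do not expect a genuine obstacle. The only delicate point is bookkeeping of neighborhoods. The zero-coupling identity must hold on all of $\M\cap V$, not just on $K$, because \Cref{thm:second-order}\textup{(ii)} evaluates $C_z$ at projection points $z$ near $x$. This is why $V$ is shrunk into the clean-intersection neighborhood $\mathcal O$, where \Cref{prop:gap-linearization} applies. The shrinking must also remain compatible with the $\widehat q$-dependent shrinking in the rate statement. Since every shrinking is applied around the same $K$ and limiting maps agree on overlaps by \Cref{lem:original-iteration}, this compatibility is routine.
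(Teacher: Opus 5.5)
Your proposal is correct and follows the paper's proof. The paper likewise obtains $\rho_{\rm out}<1$ from \Cref{thm:gap-outer-stability}, applies \Cref{thm:main-polynomial,thm:limiting-map} with base map $S_{\alpha_1,\alpha_2}$ and polynomial $p_\beta$, and derives the higher-order conclusions from the zero coupling in \eqref{eq:gap-core-linearization} together with \Cref{thm:fourth-order-agreement}; your extra bookkeeping (shrinking $V$ so that $\M\cap V\subset\mathcal O$, and noting $q_{p_\beta}\equiv\beta$) only makes explicit what the paper leaves implicit.
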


\begin{proof}
By {the stability criterion
\eqref{eq:gap-outer-stability-interval} in} \Cref{thm:gap-outer-stability},
\[
\sup_{x\in K}\spr(p_\beta(A_x^{\rm core}))<1.
\]
Apply {\Cref{thm:main-polynomial,thm:limiting-map}} with base map
$S_{\alpha_1,\alpha_2}$ and polynomial $p_\beta$. The higher-order conclusions
follow from {\eqref{eq:gap-core-linearization} and}
\Cref{thm:fourth-order-agreement}.
\end{proof}

%% file: appendix_numerics.tex
\section{Numerical protocol and supplementary results}
\label[appendix]{app:numerics}

\subsection{Controlled experiments}

{This appendix records the sampling and solver details
for the comparisons in \Cref{sec:numerics}. For
\eqref{eq:numerical-sphere-affine},}
let $a=d^{-1/2}(1,\ldots,1)^T$ and generate $x$ by normalizing a Gaussian vector
in $a^\perp$, scaling it by $\sqrt{1-c_F^2}$, and adding $c_Fa$.
{The finite-difference measurement is
\eqref{eq:factor-finite-difference}.}
We use $d=100$, $h=10^{-7}$,
$c_F\in\{0.5,0.8,0.95,0.99,0.999\}$,
$m\in\{1,2,4,8\}$, and 20 seeds. The factorized realization uses
Leja-ordered roots; the {affine-combination} realization evaluates the same polynomial
in the power basis.

To probe the Friedrichs-angle scaling as $\theta_F\to0$ predicted by the
{limiting normal spectral factor}, set $d=50$,
$\theta_F\in\{0.4,0.2,0.1,0.05,0.025\}$, and
$m=\lceil1/\theta_F\rceil$. Starting from $x+10^{-8}v$, each method reduces
the distance to the known intersection by $10^{-4}$, subject to an absolute
floor $10^{-13}$ and an evaluation budget of $10^5$ base AP evaluations.
All runs for the 20 seeds reach the target. {In the listed
decreasing order of the five angles, the median
AP/Chebyshev base AP evaluation counts are}
\[
 (57,229,920,3683,14733)/(18,35,70,140,280),
\]
and least-squares fits of the log evaluation count against log $\theta_F$
give slopes
$-2.0035$ and $-0.9919$.

For the retraction-property diagnostics on
\eqref{eq:numerical-parabolic-surfaces}, the local projections solve their
stationarity equations to residuals smaller than the errors used in the fits. The limit
tolerance is $10^{-14}$, and the eight tangent steps are
\[
0.2,0.112468,0.063246,0.035566,0.02,0.011247,0.006325,0.003557.
\]
The tangent sign is selected independently for 20 seeds, and all steps enter
the fits. The fitted $(e_2,e_4)$ orders are AP $(2.993,4.009)$; factorized
realizations of degree 2 and 4, $(2.998,3.969)$ and $(3.001,3.963)$; and
{affine-combination} realizations of degree 2 and 4, $(2.992,4.057)$ and
$(2.993,4.004)$. Differences between the two realizations have fitted orders
$3.944$ and $3.948$. All 25 predeclared factor, base AP evaluation, and
retraction-property diagnostics fall within their prescribed ranges.

\subsection{FUNTF protocol}

{The projections and stopping residual used for
\eqref{eq:funtf-feasibility} are defined in
\eqref{eq:frame-tightness-projection} and \eqref{eq:frame-residual}.}
{Construction uses $(d,N)=(16,32)$, $(32,64)$, and
$(64,128)$. Completion uses $(d,N,p)=(32,64,16)$ and $(64,128,32)$.} For completion, a reference FUNTF is
first generated to residual $10^{-12}$, its first $p$ columns are fixed, and
the remaining columns are initialized as normalized Gaussian vectors. All
methods share each seed's initial matrix.

After ten base AP evaluations included in the reported cost, the common
heuristic estimator is
\begin{equation}
 \widehat\rho=\min\{0.9999,\max\{10^{-6},
 1.02\operatorname{median}(r_k/r_{k-1})\}\},
 \label{eq:rho-estimator}
\end{equation}
where $r_k:=r(X_k)$ is the residual {defined in
\eqref{eq:frame-residual}} after the $k$th counted base AP evaluation,
and the median uses at most five recent positive finite ratios. This
residual-ratio estimate is not a certified spectral bound; away from the local
asymptotic regime it may under- or overestimate the true uniform normal
spectral radius.
{R-AP relaxes the full AP map $S_C$ as
$(1-\omega)\Id+\omega S_C$, with
$\omega=2/(2-\widehat\rho)$ chosen by
\eqref{eq:optimal-degree-one-relaxation} for the interval estimated
by \eqref{eq:rho-estimator}}; the degree-4 and degree-8 periodic schemes use
Leja-ordered roots and restart. A nonfinite residual or a residual greater
than $1.2$ times the previous residual rejects the proposed periodic scheme.
The solver then falls back to AP, and all attempted base AP evaluations are
counted.
The evaluation budget is $5\times10^4$.

There are 20 seeds for each of five problems and four methods (400 runs).
Timing is the median of three deterministic repetitions; counts and times
are aggregated by medians. Paired speedup intervals are the 2.5\% and 97.5\%
percentiles of 5000 bootstrap resamples (seed 20260912). Each reported base
AP evaluation for these frame methods performs one thin SVD, so the base AP
evaluation and SVD counts coincide. SVD count is the primary computational
metric. Timings depend on the implementation, and the fixed method order
makes them secondary descriptive evidence.

\subsection{Computational environment}

The frame experiments use Python 3.10.5 on 64-bit Windows 10 (Intel64 Family
6 Model 140, eight logical CPUs), NumPy 1.26.0 with OpenBLAS64 0.3.23.dev,
SciPy 1.15.2, pandas 2.2.3, Matplotlib 3.10.3, and PyYAML 6.0.2. Runs use one
process and one thread for both OpenBLAS and OpenMP. Exact versions and
machine metadata are stored in \path{experiments/requirements-lock.txt} and
each result directory.

%% file: appendix_chebyshev_formulas.tex
\section{{Explicit formulas for Chebyshev polynomials}}
\label[appendix]{app:chebyshev-formulas}

{For each nonnegative integer $m$, $T_m$ denotes the Chebyshev
polynomial of the first kind \cite{MasonHandscomb2003,Saad2003}.
The initial values and recurrence are}
\begin{equation}
\label{eq:chebyshev-recurrence}
{
T_0(t)=1,\qquad T_1(t)=t,\qquad
T_{m+1}(t)=2tT_m(t)-T_{m-1}(t),\quad m\geq1.
}
\end{equation}
{Thus $T_2(t)=2t^2-1$, $T_3(t)=4t^3-3t$, and
$T_4(t)=8t^4-8t^2+1$. Equivalent formulas for real $t$ are}
\begin{equation}
\label{eq:chebyshev-real-formulas}
{
T_m(t)=
\begin{cases}
\cos(m\arccos t),& |t|\leq1,\\[2pt]
\dfrac{(t+\sqrt{t^2-1})^m+(t-\sqrt{t^2-1})^m}{2},& |t|>1.
\end{cases}
}
\end{equation}
{In the second line, the square root is the nonnegative real root;
the formula also applies for $t<-1$. In particular,
$|T_m(t)|=\cosh(m\operatorname{arcosh}|t|)>1$ when $m\geq1$ and $|t|>1$.
An explicit power expansion for $m\geq1$ is}
\begin{equation}
\label{eq:chebyshev-power-expansion}
{
T_m(t)=\frac{m}{2}
\sum_{j=0}^{\lfloor m/2\rfloor}
(-1)^j\frac{(m-j-1)!}{j!(m-2j)!}(2t)^{m-2j}.
}
\end{equation}
{For $m=0$, use $T_0=1$ instead of
\eqref{eq:chebyshev-power-expansion}.}